\newtheorem{thm}{Theorem}[section]
\newtheorem{lem}[thm]{Lemma}
\newtheorem{rem}[thm]{Remark}
\newtheorem{exmp}[thm]{Example}
\newtheorem{conj}[thm]{Conjecture}
\newtheorem{cor}[thm]{Corollary}
\newtheorem{lemma}[thm]{Lemma}
\newtheorem{mainthm}[thm]{Main Conjecture}
\newtheorem{splitting-lemma}[thm]{Splitting Lemma}
\def\re{\mathrm{Re}}
\def\im{\mathrm{Im}}
\def\bar#1{\overline{#1}}
\def\deg{\mathop{\mathrm{deg}}\nolimits}
\def\R{{\mathbb R}}
\def\H{{\mathbb H}}
\long\def\comment#1\endcomment{}
\begin{document}


\title{Surfaces containing two circles through each point \\ and Pythagorean 6-tuples}

\author{M. Skopenkov, R. Krasauskas}
\date{}

\maketitle

\begin{abstract}
We study analytic surfaces in 3-dimensional Euclidean space containing two circular arcs through each point. The problem of finding such surfaces traces back to the works of Darboux 
from XIXth century. We reduce finding all such surfaces to the algebraic problem of finding all Pythagorean 6-tuples of polynomials.
The reduction is based on the Schicho parametrization of surfaces containing two conics through each point and a new approach using quaternionic rational parametrization.

\smallskip

\noindent{\bf Keywords}: Darboux cyclide, circle, Moebius geometry, quaternion, Pythagorean n-tuple

\noindent{\bf 2010 MSC}: 51B10, 14J26, 16H05
\end{abstract}

\footnotetext[0]{
The article was prepared within the framework of the Academic Fund Program at the National Research University Higher School of Economics (HSE) in 2015-2016 (grant No 15-01-0092) and supported within the framework of a subsidy granted to the HSE by the Government of the Russian Federation for the implementation of the Global Competitiveness Program.
During the work on this paper the first author received support 
also from ``Dynasty'' foundation and from the Simons--IUM fellowship.
The second author was partially supported by the Marie-Curie Initial Training Network SAGA, FP7-PEOPLE contract PITN-GA-214584.
}

\section{Introduction}

We study surfaces in space $\mathbb{R}^3$ 
such that through each point of the surface one can draw two circles fully contained in the surface. Hereafter by a \emph{circle} we mean either an ordinary circle in $\mathbb{R}^3$ or a straight line.
In this paper we 
reduce finding all such surfaces to the algebraic problem of finding all Pythagorean $6$-tuples of polynomials.  In a subsequent publication we are going to solve the latter problem.

The problem of finding such surfaces traces back to the works of Darboux 
from XIXth century. Basic examples --- a one-sheeted hyperboloid and a nonrotational ellipsoid --- are discussed in Hilbert--Cohn-Vossen's ``Anschauliche Geometrie''. There (and respectively, in a recent paper \cite{NS11} by Nilov and the first author) it is also proved  that a smooth surface containing two lines (respectively, a line and a circle) through each point is a quadric or a plane. A torus contains $4$ circles through each point: a meridian, a parallel, and two Villarceau circles.

\begin{figure}[bht]
\begin{center}
\begin{tabular}{c}
\includegraphics[height=2.2cm]{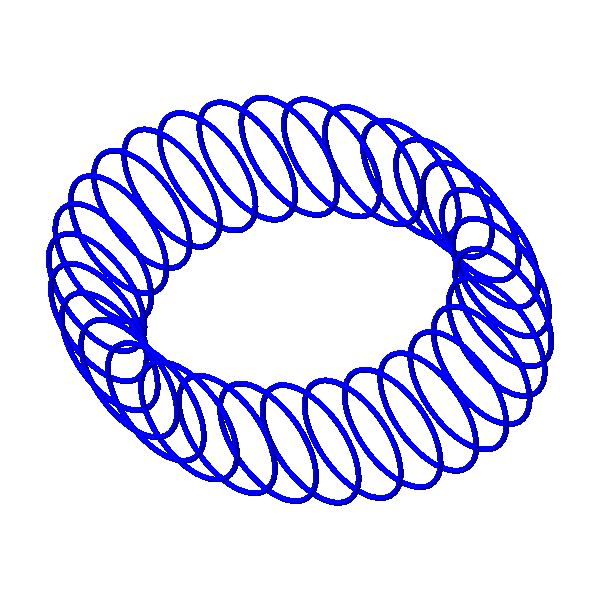}
\end{tabular}
\hspace{-0.5cm}
\begin{tabular}{c}
\includegraphics[height=2.2cm]{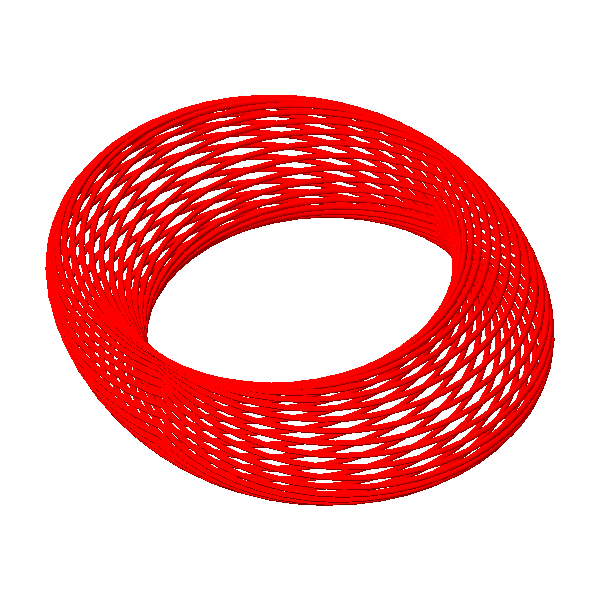}
\end{tabular}
\begin{tabular}{c}
\includegraphics[height=1.8cm]{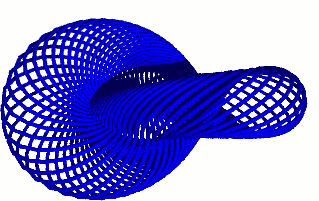}
\end{tabular}
\hspace{-0.5cm}
\begin{tabular}{c}
\includegraphics[height=1.8cm]{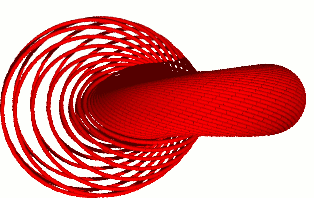}
\end{tabular}
\begin{tabular}{c}
\includegraphics[width=0.15\textwidth]{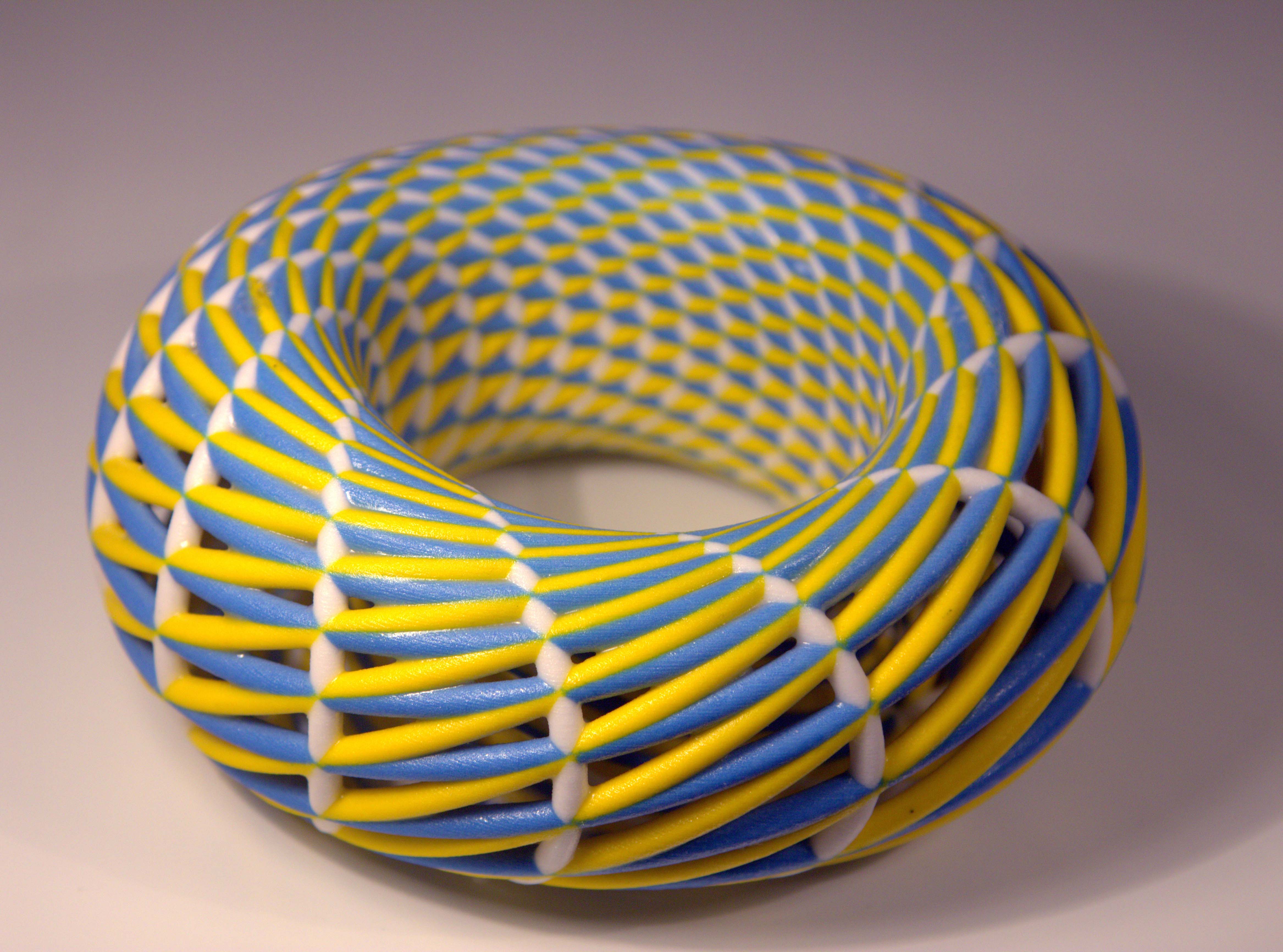}
\end{tabular}	
\end{center}
\caption{A Darboux cyclide, 
Euclidean and Clifford translational surfaces \cite{Niels-13}.}
\label{movie}
\end{figure}



All these examples are particular cases of a \emph{Darboux cyclide}, i.e., a subset of $\mathbb{R}^3$ given by the equation
$$
a(x^2+y^2+z^2)^2+(x^2+y^2+z^2)(bx+cy+dz)+Q(x,y,z)=0,
$$
where $a,b,c,d\in\mathbb{R}$ and $Q\in\mathbb{R}[x,y,z]$ of degree $\le 2$ do not vanish simultaneously; see Figure~\ref{movie} to the left.
Equivalently, a \emph{Darboux cyclide} is the stereographic projection of the intersection of the sphere $S^3$ with another $3$-dimensional quadric \cite[Section~2.2]{PSS11}. Almost each Darboux cyclide contains at least $2$ circles through each point (and there is an effective algorithm to count their actual number \cite{PSS11,Takeuchi-00}). Conversely, Darboux has shown that 
$10$ circles through each point guarantee that an analytic 
surface is a Darboux cyclide. 
This result has been improved over the years: in fact already $3$, or $2$ orthogonal, or $2$ cospheric circles are sufficient for the same conclusion
\cite[Theorem~3]{Niels-13},
\cite[Theorem~1]{ivey:1995},
\cite[Theorem~20 in p.~296]{coolidge:1916},
  cf. \cite{Blum,
NS11}.
Hereafter two circles are called \emph{cospheric}, if they are contained in one $2$-dimensional sphere or plane.

Recently there has been a renewed interest to surfaces containing $2$ circles through each point due to Pottmann who considered their potential applications to architecture \cite{PSS11}.
Pottmann noticed that the \emph{Euclidean translational surface} 
$\{\,p+q:p\in\alpha,q\in\beta\,\}$,  where $\alpha,\beta$ are two fixed generic circles in $\mathbb{R}^3$,
contains $2$ circles through each point but is not a Darboux cyclide 
\cite[Example~3.9]{NS11}.
Another example with similar properties was given by Zub\.e in 2011: the stereographic projection of a \emph{Clifford translational surface} 
$\{\,p\cdot q:p\in\alpha,q\in\beta\,\}$, where $\alpha,\beta$ are now circles in the sphere ${S}^3$ identified with the set of unit quaternions. 
The projection itself is called a \emph{Clifford translational surface} as well; see Figure~\ref{movie} to the right. It may have degree up to $8$. Each degree $8$ surface in $S^3$ containing a \emph{great} circle and another circle through each point is Clifford translational \cite[Corollary~2c]{Lubbes-15}.
More examples can be obtained from these translational surfaces by \emph{M\"obius transformations}, i.e., compositions of inversions. 
Euclidean and Clifford translational surfaces are not M\"obius transformations of each other \cite[Theorem 2b]{Lubbes-15}.
For related transformations taking lines to circles see \cite{Timorin-06}.

We conjecture that the above ones are the only possible surfaces containing $2$ circles through each point. Let us make this statement precise. We switch to a local problem involving a piece of a surface instead of a closed one and circular arcs instead of circles. By an \emph{analytic surface} in $\mathbb{R}^n$ we mean the image of an injective real analytic map of a planar domain into $\mathbb{R}^n$ with nondegenerate differential at each point. We use the same notation for the map and the surface; no confusion arises from this. A circle (or circular arc) \emph{analytically depending} on a point is a real analytic map of an analytic surface into the 
variety of all circles (or circular arcs) in $\mathbb{R}^n$. 
Analyticity is not really a restriction \cite{Kataoka-Takeuchi-13}.


\begin{mainthm}\label{mainthm}
If through each point of an analytic surface in $\mathbb{R}^3$ one can draw two transversal circular arcs fully contained in the surface (and analytically depending on the point) then the surface is a M\"obius transformation of a subset of either a Darboux cyclide, or Euclidean or Clifford translational surface.
\end{mainthm}


We hope to deduce Main Conjecture~\ref{mainthm} from the following 4-dimensional counterpart. The $4$-dimensional problem seems to be more accessible than the $3$-dimensional one because of nice approach using quaternions. In what follows identify $\mathbb{R}^4$ with the skew field $\mathbb{H}$ of quaternions, and $\mathbb{R}^3$ with the set $\mathrm{Im}\mathbb{H}$ of purely imaginary quaternions. M\"obius transformations in $\mathbb{R}^4$ are precisely the nondegenerate maps of the form $q\mapsto (aq+b)(cq+d)^{-1}$ and $q\mapsto (a\bar q+b)(c\bar q+d)^{-1}$, where $a,b,c,d\in\mathbb{H}$;
see~\cite{Lavicka-etal-07} for an exposition. Circles in $\mathbb{R}^4$ are precisely the nondegenerate curves having a parametrization of the form $\alpha(u)=(au+b)(cu+d)^{-1}$ (outside one point),
where $a,b,c,d\in\mathbb{H}$ are fixed and $u\in\mathbb{R}$ runs. Denote by $\H_{mn}\subset\mathbb{H}[u,v]$ the set of polynomials with quaternionic coefficients of degree at most $m$ in the variable $u$ and at most $n$ in the variable $v$ (the variables commute with each other and the coefficients). Denote $\H_{m*}:=\bigcup_{n=1}^{\infty}\H_{mn}$. Define $\H_{**}$, $\mathbb{C}_{mn}$, and $\mathbb{R}_{mn}$ analogously. For each $P\in\mathbb{H}_{mn}$ and real numbers $\hat u,\hat v$ (but not quaternions) the value $P(\hat u,\hat v)$ is well-defined. Thus the polynomial $P$ or a rational expression in such polynomials defines a surface in $\mathbb{R}^4$.





\begin{conj}
\label{4DBconj}
Assume that through each point of an analytic surface in $\mathbb{R}^4$ one can draw two noncospheric circular arcs fully contained in the surface (and analytically depending on the point).
Assume that for each point in some dense subset of the surface the number of circular arcs passing through the point and fully contained in the surface is finite. Then some M\"obius transformation of the surface has a parametrization
\begin{equation}\label{eq-4DBconj}
\Phi(u,v)=A(u,v)^{-1}B(u,v)C(u,v)^{-1}
\end{equation}
for some polynomials $A,B,C\in\mathbb{H}_{11}$ such that $AC\in \mathbb{H}_{11}$.
\end{conj}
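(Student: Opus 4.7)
The plan is to combine Schicho's birational parametrization theorem for surfaces containing two families of conics with the quaternionic M\"obius parametrization of circles in $\H$, exploiting the identification $\mathbb{R}^4=\H$.

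First, I would use the analyticity and finiteness hypotheses to show that the surface extends to a real algebraic surface of bounded degree carrying two algebraic $1$-parameter families of conics, via the classical observation that a surface containing infinitely many conics through each point in a neighbourhood satisfies a polynomial equation of bounded degree (cf.\ Darboux, Coolidge). Applying Schicho's theorem then yields a birational parametrization $\Phi\colon\mathbb{RP}^1\times\mathbb{RP}^1\dashrightarrow\mathbb{R}^4$ of bidegree $(2,2)$ whose $u$- and $v$-rulings are the two given circle families.

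Second, since each circle in $\H$ admits a quaternionic M\"obius parametrization $t\mapsto(\alpha t+\beta)(\gamma t+\delta)^{-1}$, one can, after matching the Schicho $u$-parameter (unique up to a three-dimensional $\mathrm{PGL}_2(\mathbb{R})$ ambiguity) with the natural M\"obius parameter of each $u$-circle, write
$$\Phi(u,v)=\bigl(a(v)u+b(v)\bigr)\bigl(c(v)u+d(v)\bigr)^{-1},$$
with $a,b,c,d$ quaternion-valued polynomials in $v$ of controlled degree; a symmetric representation holds in $v$. The heart of the argument is then to convert these two one-sided M\"obius parametrizations into a single two-sided quaternionic form $\Phi=A^{-1}BC^{-1}$ with $A,B,C\in\H_{11}$. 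The compatibility condition $AC\in\H_{11}$ arises because, for each fixed $v_0$, the right-hand side is a priori a cubic rational function of $u$ whereas a circle is M\"obius in $u$; therefore the $u^2$-term of $A(u,v_0)C(u,v_0)$ must cancel. Since $\H$ is a division algebra (no zero divisors), this cancellation forces either $A$ or $C$ to be constant, or else one of them to depend only on $u$ and the other only on $v$---precisely the algebraic content of $AC\in\H_{11}$.

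The main obstacle is the extraction of the particular form $A^{-1}BC^{-1}$ from the two one-sided M\"obius parametrizations: it requires a quaternionic analogue of Hermite--Smith reduction and is complicated by the non-commutativity of $\H$. The target form is motivated by direct verification on known examples. For the flat Clifford torus $\Phi(u,v)=(1+\i u)(1-\i u)^{-1}(1+\j v)(1-\j v)^{-1}$ one checks $\Phi=A^{-1}BC^{-1}$ with $A=1-\i u$, $B=(1+\i u)(1+\j v)$, $C=1-\j v$, and then $AC=1-\i u-\j v+\k uv\in\H_{11}$, exhibiting the case where $A$ and $C$ each depend on a single variable. The Euclidean translational surface and generic Darboux cyclides are expected to fall under the complementary case where $A$ (or $C$) is constant, possibly after an auxiliary M\"obius transformation. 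The noncosphericity hypothesis is used to rule out degenerate configurations in which the surface lies on a $2$-sphere and admits qualitatively different quaternionic normal forms.
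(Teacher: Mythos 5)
You are attempting to prove what is, in the paper, Conjecture~\ref{4DBconj}: the authors do not prove it, and their results only reduce it (via Theorem~\ref{schicho} and Corollary~\ref{haupt}) to the purely algebraic Conjecture~\ref{conj-alg} on Pythagorean $6$-tuples in $\mathbb{R}_{22}$, which is explicitly deferred to a subsequent publication. Your first step reproduces the part the paper does carry out --- algebraicity of the surface and the Schicho bidegree-$(2,2)$ parametrization whose rulings are the two circle families --- but the step you yourself call ``the heart of the argument,'' namely passing from that real $(2,2)$ parametrization (equivalently, from the two one-sided quaternionic M\"obius parametrizations) to the two-sided form $\Phi=A^{-1}BC^{-1}$ with $A,B,C\in\mathbb{H}_{11}$ and $AC\in\mathbb{H}_{11}$, is exactly the open algebraic content of Conjecture~\ref{conj-alg}, and your proposal contains no argument for it: ``a quaternionic analogue of Hermite--Smith reduction'' is a name for the missing step, not a proof. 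The difficulty is genuine, not technical bookkeeping: $\mathbb{H}[u,v]$ is not a unique factorization domain (Example~\ref{ex-Beauregard}), so the factorizations you need cannot be extracted by the commutative or one-variable arguments that work for $\mathbb{C}[u,v]$ or $\mathbb{H}[u]$; the only two-variable case the paper can handle is the very special Splitting Lemma~\ref{l-splitting-basic}, where $|Q|^2$ splits as $P(u)R(v)$ for $Q\in\mathbb{H}_{11}$.

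Two intermediate claims are also unsupported. First, writing $\Phi(u,v)=\bigl(a(v)u+b(v)\bigr)\bigl(c(v)u+d(v)\bigr)^{-1}$ with $a,b,c,d$ \emph{polynomial} in $v$ of controlled degree is not automatic: each circle $v=\mathrm{const}$ admits a three-parameter family of M\"obius parametrizations, and choosing one that depends polynomially (rather than merely analytically, or rationally with uncontrolled denominators) on $v$ is itself a factorization statement of the same nature as the one you are trying to establish --- essentially the content of Corollary~\ref{cor-splitting} and beyond. Second, your derivation of the side condition $AC\in\mathbb{H}_{11}$ is circular: you argue about cancellation of the $u^{2}$-term of $A(u,v_{0})C(u,v_{0})$ before $A,B,C$ have been constructed, and in any case a circle may admit parametrizations of degree higher than one over $\mathbb{H}$ (non-faithful or with removable quaternionic factors), so ``the $u$-curve is a circle'' does not by itself force that cancellation; in the paper the implication runs the other way, with $AC\in\mathbb{H}_{11}$ taken as a hypothesis in Theorem~\ref{prop21}. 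You also never produce the M\"obius transformation of the surface required in the statement. In short, your proposal is a reasonable programme consistent with the paper's own reduction, but the decisive quaternionic factorization step is missing, and it is precisely the problem the paper leaves open.
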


Conversely, 
almost each surface~\eqref{eq-4DBconj} contains two circular arcs $u=\mathrm{const}$ and $v=\mathrm{const}$ through each point because the curves $\alpha(u)=(au+b)(cu+d)^{-1}$ and $\beta(u)=(cu+d)^{-1}(au+b)$ are circular arcs  for almost each $a,b,c,d\in\mathbb{H}$. E.g., $\Phi(u,v)=(v+i)^{-1}(v+j)(u+k)(u+i)^{-1}$ is a Clifford translational surface, and $\Phi(u,v)=(u-i)((2j+i)v-2i-j)\left((u-i)(v-k)\right)^{-1}$ is a torus.

The first result of this paper is the following assertions reducing the
$3$-dimensional problem to the $4$-dimensional one. They are proved in Section~\ref{sec:proofs}.

\begin{thm}\label{prop21}
\label{prop:21-split}
If surface~\eqref{eq-4DBconj} 
is contained in $\mathbb{R}^3$ (respectively, in $S^3$) then it is a subset of either an
Euclidean (respectively, Clifford) translational surface or
a Darboux cyclide (respectively, an intersection of $S^3$ with another $3$-dimensional quadric).
\end{thm}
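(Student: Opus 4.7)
\emph{Proof Plan.} My plan is to carry out a case analysis on $(A,C)$ using the constraint $AC\in\mathbb{H}_{11}$, and then impose the hypothesis $\Phi(u,v)\in\mathrm{Im}\,\mathbb{H}$ or $\Phi(u,v)\in S^3$ to identify the surface.

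First, I would classify the possible shapes of $A$ and $C$. Writing $A,C\in\mathbb{H}_{11}$ in the basis $\{1,u,v,uv\}$, the condition $AC\in\mathbb{H}_{11}$ requires that the coefficients of $u^2,v^2,u^2v,uv^2,u^2v^2$ in $AC$ all vanish. This yields five relations each of the form $\alpha\beta=0$ in the quaternionic coefficients of $A$ and $C$; since $\mathbb{H}$ has no zero divisors, each such relation forces a factor to vanish. A short case analysis then shows that up to swapping $u\leftrightarrow v$ we may assume $A=A(v)\in\mathbb{H}_{01}$ and $C=C(u)\in\mathbb{H}_{10}$ each depend on a single, distinct variable (and either may be constant).

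Next, I would examine two natural decompositions of $B\in\mathbb{H}_{11}$. If $B=P(v)C(u)+A(v)Q(u)$ for some $P\in\mathbb{H}_{01},Q\in\mathbb{H}_{10}$, then direct computation gives $\Phi=A^{-1}P+QC^{-1}=p(v)+g(u)$, a sum of two M\"obius-parametrised curves. If $B=B_v(v)B_u(u)$ factorises, then $\Phi=(A^{-1}B_v)(B_uC^{-1})=f(v)g(u)$, a product of two M\"obius-parametrised curves. Now impose the geometric constraint: in the $\mathrm{Im}\,\mathbb{H}$ case with the sum form, the requirement $\mathrm{Re}\,p(v)+\mathrm{Re}\,g(u)\equiv 0$ splits over the two variables and forces both real parts to be complementary constants; absorbing these constants into $p,g$ yields $p,g:\mathbb{R}\to\mathrm{Im}\,\mathbb{H}$, and $\Phi$ is a Euclidean translational surface. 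In the $S^3$ case with the product form, $|f(v)|\cdot|g(u)|\equiv 1$ splits similarly and forces $|f|,|g|$ to be reciprocal real constants; rescaling, $f,g:\mathbb{R}\to S^3$, so $\Phi=fg$ is a Clifford translational surface.

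In the remaining situations---sum form imposed in $S^3$, product form imposed in $\mathrm{Im}\,\mathbb{H}$, and the case where $B$ belongs to neither family---I would eliminate $(u,v)$ directly from the defining identity $A\Phi C=B$, a quaternionic equation affine in $\Phi$ with bidegree $(1,1)$ coefficients. Combined with the constraint $AC\in\mathbb{H}_{11}$ and the polynomial identity $|B|^2=|AC|^2$ arising from $|\Phi|^2=1$ in $S^3$ (or its imaginary-quaternion analogue $\bar A B\bar C+C\bar B A=0$ in $\mathrm{Im}\,\mathbb{H}$), this elimination should force the resulting implicit equation on $\Phi\in\mathbb{H}=\mathbb{R}^4$ to have degree at most $2$. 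Then $\Phi$ lies on a quadric $Q\subset\mathbb{R}^4$, and the intersections $\Phi\subset S^3\cap Q$ or $\Phi\subset\mathrm{Im}\,\mathbb{H}\cap Q$ give the claimed intersection of $S^3$ with a three-dimensional quadric, or a Darboux cyclide, respectively.

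The main obstacle I foresee is this last elimination step: \emph{a priori} the implicit equation could have degree higher than $2$, and controlling the higher-order terms requires cancellations enforced by both the bidegree constraint $AC\in\mathbb{H}_{11}$ and the polynomial identity $|B|^2=|AC|^2$ (or its imaginary analogue). I expect this to reduce to a resultant computation for a system of non-commutative polynomials in two variables, which should be tractable but technical.
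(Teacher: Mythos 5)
There is a genuine gap, and it sits exactly where your plan defers the work. First, the dichotomy ``$B$ has sum form or product form'' is never justified, and it is not exhaustive. In the $S^3$ case the entire difficulty is to prove that $|B(u,v)|^2=|A(v)|^2|C(u)|^2$ forces $B$ to factor as a $v$-polynomial times a $u$-polynomial: this is the paper's Splitting Lemma, which needs a real proof because $\mathbb{H}[u,v]$ is not a unique factorization domain (cf.\ the Beauregard example in the paper). Moreover, by noncommutativity the factorization can come in either order, $B=D(v)E(u)$ or $B=E(u)D(v)$, and only the first order gives a Clifford translational surface; the second gives $\Phi=(\bar A E)(C\bar D)^{-1}$, a quotient of two bilinear quaternionic polynomials, which is precisely the ``Darboux cyclide / quadric intersection'' branch and needs a separate argument. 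Similarly, in the $\mathbb{R}^3$ case you need a criterion for when the sum form holds: after normalizing $A=v+a$, $C=u+c$ with $a,c\in\mathrm{Im}\,\mathbb{H}$, the obstruction is a single quaternion $b$ with $\Phi(u,v)-\Phi(u,0)-\Phi(0,v)+\Phi(0,0)=(v+a)^{-1}buv(u+c)^{-1}$; either $b=0$ (Euclidean translational) or $b\ne 0$, in which case containment in $\mathrm{Im}\,\mathbb{H}$ forces $a\parallel c$ and the surface degenerates into a plane. Nothing in your plan produces such a trichotomy; you push all residual cases into elimination. (A smaller slip of the same kind: if $A$ is constant, $AC\in\mathbb{H}_{11}$ imposes nothing on $C$, which may contain a $uv$ term, so your reduction to $A\in\mathbb{H}_{01}$, $C\in\mathbb{H}_{10}$ is not complete; this case again lands in the quotient form $PQ^{-1}$ with $P,Q\in\mathbb{H}_{11}$.)

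Second, the elimination fallback cannot work as you describe, because the hoped-for degree bound $2$ is false and would even contradict the theorem. The residual cases genuinely occur (e.g.\ $A$ constant, or $a=0$ after normalization, or the order-reversed factorization in $S^3$), and then $\Phi=PQ^{-1}$ with $P,Q\in\mathbb{H}_{11}$; such surfaces are in general quartic Darboux cyclides --- the paper's torus $\Phi(u,v)=(u-i)((2j+i)v-2i-j)\left((u-i)(v-k)\right)^{-1}$ is one. A quadric hypersurface $Q\subset\mathbb{R}^4$ either contains the hyperplane $\mathrm{Im}\,\mathbb{H}$ (and then gives no information) or meets it in a quadric surface of $\mathbb{R}^3$, so ``$\Phi\subset\mathrm{Im}\,\mathbb{H}\cap Q$'' can never produce a genuine quartic cyclide such as the torus; identifying Darboux cyclides with $\mathbb{R}^3\cap\{\text{quadric in }\mathbb{R}^4\}$ is a conceptual error (the correct statement, used in the paper, is that cyclides are stereographic projections of $S^3\cap\{\text{quadric}\}$). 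The paper's route through this branch is different and essentially unavoidable in some form: show that the surface $\tilde A\tilde B^{-1}$ lies on an algebraic hypersurface of degree at most $4$ (a determinant argument on the linear system $\tilde A-q\tilde B=0$, with a careful separate treatment when the determinant vanishes identically), and then use that both $AB^{-1}$ and its image $-BA^{-1}$ under inversion in the unit sphere have this form, together with the lemma that two irreducible degree $\le 4$ surfaces symmetric with respect to the unit sphere are Darboux cyclides. Without the Splitting Lemma and this degree-$4$-plus-inversion argument (or equivalents), your outline does not close.
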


\begin{cor}\label{cor-3D} If a surface in $\mathbb{R}^3$ is a M\"obius transformation of surface~\eqref{eq-4DBconj} in $\mathbb{R}^4$ then the former surface is a M\"obius transformation of a subset of either a Darboux cyclide, or Euclidean or Clifford translational surface.
\end{cor}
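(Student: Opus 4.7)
The plan is to deduce Corollary~\ref{cor-3D} from Theorem~\ref{prop21} by transporting the preimage surface $S'=\phi^{-1}(S)$ into $\mathbb{R}^3$ or $S^3$ via an auxiliary M\"obius transformation. Write $S = \phi(S')$ with $\phi$ a M\"obius transformation of $\mathbb{R}^4$ and $S' \subset \mathbb{R}^4$ parametrized by \eqref{eq-4DBconj}. Since $S \subset \mathbb{R}^3$, we have $S' \subset \Sigma := \phi^{-1}(\mathbb{R}^3 \cup \{\infty\})$, which is a $3$-sphere or $3$-hyperplane in $\mathbb{R}^4 \cup \{\infty\}$. Choose a M\"obius transformation $\psi$ of $\mathbb{R}^4$ sending $\Sigma$ onto either $\mathbb{R}^3$ or $S^3$, and set $S'' := \psi(S')$.

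The main obstacle is to show that $S''$ still admits a parametrization of the form \eqref{eq-4DBconj}, equivalently, that this class of parametrizations is closed under M\"obius transformations of $\mathbb{R}^4$. I would check invariance on a set of generators of the M\"obius group. Translations $\Phi \mapsto \Phi + t$, left multiplications $\Phi \mapsto a\Phi$, and right multiplications $\Phi \mapsto \Phi b$ are handled by the direct substitutions $B \mapsto B + AtC$, $A \mapsto Aa^{-1}$, $C \mapsto b^{-1} C$. The only subtle point in these cases is the auxiliary identity $AqC \in \mathbb{H}_{11}$ for every $q \in \mathbb{H}$ whenever $AC \in \mathbb{H}_{11}$, which I would prove by coefficient comparison: the vanishing of each top-degree coefficient of $AC$ forces a product of coefficients of $A$ and $C$ to vanish, and since $\mathbb{H}$ has no zero divisors, one factor of each such product is zero, a property that survives the insertion of $q$. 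Conjugation $\Phi \mapsto \bar\Phi = \bar C^{-1}\bar B \bar A^{-1}$ preserves the form immediately, using $\bar C \bar A = \overline{AC} \in \mathbb{H}_{11}$. The most delicate case is inversion $\Phi \mapsto \Phi^{-1} = CB^{-1}A$; here I would exploit the strong structural restriction that $AC \in \mathbb{H}_{11}$ imposes on $A$ and $C$ — up to symmetry between $u$ and $v$, either one of $A,C$ is a quaternion constant, or $A$ depends only on one of the variables and $C$ only on the other — and in each case rewrite $CB^{-1}A$ explicitly as $A'^{-1}B'C'^{-1}$ with $A',B',C' \in \mathbb{H}_{11}$ and $A'C' \in \mathbb{H}_{11}$ via quaternionic factorizations in $\mathbb{H}[u,v]$, possibly after a M\"obius reparametrization of $(u,v)$.

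Once M\"obius-invariance of \eqref{eq-4DBconj} is established, Theorem~\ref{prop21} applies to $S''$: it is a subset of a Euclidean translational surface or a Darboux cyclide if $\psi(\Sigma) = \mathbb{R}^3$, and of a Clifford translational surface or an intersection $S^3 \cap Q_3$ if $\psi(\Sigma) = S^3$. The composition $\phi\psi^{-1}$ is a M\"obius transformation of $\mathbb{R}^4$ mapping $\psi(\Sigma)$ onto $\mathbb{R}^3$; in the planar case its restriction is already a M\"obius transformation of $\mathbb{R}^3$, while in the spherical case composition with an inverse stereographic projection $\mathbb{R}^3 \to S^3$ yields a M\"obius transformation of $\mathbb{R}^3$, under which (by definition) Clifford translational surfaces in $S^3$ and intersections $S^3 \cap Q_3$ correspond to Clifford translational surfaces and Darboux cyclides in $\mathbb{R}^3$ respectively. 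Therefore $S = (\phi\psi^{-1})(S'')$ is the M\"obius image in $\mathbb{R}^3$ of a subset of either a Darboux cyclide, or a Euclidean or a Clifford translational surface, as required.
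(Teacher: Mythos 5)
Your overall skeleton is the same as the paper's (transport the preimage surface into $\mathrm{Im}\,\mathbb{H}$ or $S^3$ by an auxiliary M\"obius map $\psi$, apply Theorem~\ref{prop21}, and map back), but you hang it on the unproven claim that the class of parametrizations \eqref{eq-4DBconj} is closed under \emph{all} M\"obius transformations of $\mathbb{R}^4$, and the inversion case $\Phi\mapsto\Phi^{-1}=CB^{-1}A$ is a genuine gap, not just a delicate detail. In the subcases where $A$ or $C$ is constant it indeed works ($(DC^{-1})^{-1}=CD^{-1}$, etc.), but in the essential subcase $A\in\mathbb{H}_{01}$, $C\in\mathbb{H}_{10}$ both nonconstant you give no construction of $A',B',C'$: there $\Phi^{-1}=C\bar BA\,|B|^{-2}$, and a representation $A'^{-1}B'C'^{-1}$ would have denominator either of split form $|A'(v)|^2|C'(u)|^2$ or equal to $|C'|^2$ with $C'\in\mathbb{H}_{11}$ and $|C'|^2$ proportional to $|B|^2$; in the latter case $|B'|^2$ is proportional to $|A(v)|^2|C(u)|^2$, so Splitting Lemma~\ref{l-splitting-basic} forces $B'$ to split, and one needs a refactorization of $C\bar BA$ through bidegree-$(1,1)$ factors that a generic irreducible $B$ is not known to admit ($\mathbb{H}[u,v]$ is not a UFD, cf.\ Example~\ref{ex-Beauregard}), so ``via quaternionic factorizations, possibly after a reparametrization'' is precisely the place where a proof is missing. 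That Conjecture~\ref{4DBconj} is stated only up to a M\"obius transformation is itself a hint that the form is not expected to be M\"obius-invariant. A minor inaccuracy in the easy cases: the coefficients of $u^2v$ and $uv^2$ in $AC$ are sums of two products, not single products, so $AqC\in\mathbb{H}_{11}$ does not follow coefficientwise; it does follow from the trichotomy you state ($A$ or $C$ constant, or $A$ and $C$ depend on different variables), so that part is fine once argued that way.

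The gap is avoidable, and this is exactly how the paper proceeds: you are free in the choice of $\psi$, so take it to be a similarity $q\mapsto aqc+b$ (a rotation--translation moves any hyperplane onto $\mathrm{Im}\,\mathbb{H}$, a translation--scaling moves any $3$-sphere onto $S^3$). Under such $\psi$ the form \eqref{eq-4DBconj} is preserved by the explicit substitution $(A,B,C)\mapsto(Aa^{-1},\,B+Aa^{-1}bc^{-1}C,\,c^{-1}C)$ --- only the cases you already verify correctly are needed, and inversion never enters. With that replacement, the remainder of your argument (Theorem~\ref{prop21} applied to $\psi(S')$, then returning to $\mathbb{R}^3$ via $\phi\psi^{-1}$, composing with a stereographic projection in the $S^3$ cases) is sound and coincides with the paper's proof.
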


Surfaces containing two circles through each point are particular cases of surfaces containing two conic sections or lines through each point. The latter surfaces have been classified by Brauner, Schicho, and Lubbes \cite{Brauner, schicho:2001, Lubbes-14}. In the particular case of so-called supercyclides 
a classification was given by Degen \cite{Degen-86}. The Schicho classification is up to a week equivalence relation, thus it does not allow automatically to find all surfaces containing two circles through each point. However the following parametrization result by Schicho provides the first step toward the solution of our problem. The notions used in the statement are defined analogously to the above; see Section~\ref{sec:proofs2} for details.

\begin{thm}\label{schicho} 
Assume that through each point of
an analytic surface in 
a domain in $n$-dimensional complex projective space
one can draw two transversal conic sections intersecting each other only at this point (and analytically depending on the point) such that their intersections with the domain are contained in the surface.
Assume that through each point in some dense subset of the surface one can draw only finitely many conic sections such that their intersections with the domain are contained in the surface. Then
the surface (possibly besides a one-dimensional subset) has a parametrization
\begin{equation}\label{eq-circular}
\Phi(u,v)=X_{0}(u,v):\dots:X_{n}(u,v)
\end{equation}
for some $X_{0},\dots,X_{n}\in\mathbb{C}_{22}$
such that
the conic sections are the curves $u=\mathrm{const}$ and $v=\mathrm{const}$.
%
\end{thm}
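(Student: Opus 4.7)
The plan is to show that the analytic surface sits inside an algebraic surface $S \subset \mathbb{P}^n$ carrying two algebraic pencils of conics, and then invert the pencil map to produce the bidegree $(2,2)$ parametrization.

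First I would upgrade each analytic family of conics to an algebraic pencil. A conic in $\mathbb{P}^n$ spans a unique $2$-plane, so the first family yields an analytic map from the parameter $v$ into the Grassmannian $\mathrm{Gr}(2,n)$, and within each plane the conic is cut out by a quadratic form unique up to scalar, lifting the data further into the projective bundle of conics over $\mathrm{Gr}(2,n)$. Taking the Zariski closure of the image gives a one-dimensional algebraic curve $B_1$ in the Hilbert scheme of conics in $\mathbb{P}^n$; the union of the corresponding conics is a $2$-dimensional algebraic subvariety $S \subset \mathbb{P}^n$ containing $\Phi(U\times V)$. Repeating with the second family gives $B_2$ and a second algebraic pencil on $S$. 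The finiteness hypothesis is invoked here to keep $B_1$ and $B_2$ one-dimensional, since a higher-dimensional algebraic family of conics on $S$ would force infinitely many conics through a generic point.

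Second, each $B_i$ is isomorphic to $\mathbb{P}^1$: it receives a dominant holomorphic map from a planar domain and is therefore a rational algebraic curve. The pair $\pi = (\pi_1,\pi_2) \colon S \to \mathbb{P}^1 \times \mathbb{P}^1$ sending a point to the parameters of its two conics is a dominant rational map, and its generic fibre is a single point because of the bijectivity of the analytic parametrization $\Phi$ at a generic $(v_0,u_0)$; hence $\pi$ is birational.

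Third, the inverse is a birational map
\[
\pi^{-1}(u,v) = \bigl(X_0(u,v) : \cdots : X_n(u,v)\bigr), \qquad X_i \in \mathbb{C}[u,v].
\]
Restricting to $v = v_0$ yields a birational parametrization of the degree-$2$ conic $C_{v_0}$ by polynomials in $u$; since any birational parametrization of a degree-$2$ rational curve is given by polynomials of degree at most $2$ (and no common factor), each $X_i$ has $u$-degree at most $2$ after dividing out any factor depending only on $v$. The symmetric argument bounds the $v$-degree. Since removing one-variable common factors does not alter the rational map, after normalization $X_i \in \mathbb{C}_{22}$, and the desired parametrization holds outside the indeterminacy locus, a set of complex dimension at most one, accounting for the exceptional one-dimensional subset in the statement.

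The main obstacle is the algebraization step: promoting the analytic families of conics to genuine algebraic pencils on an algebraic surface, and using the finiteness hypothesis to preclude their enlargement into higher-dimensional algebraic families. Once $S$, $B_1$, $B_2$ are algebraic and $\pi$ is shown to be birational, the bidegree bound is essentially automatic from the degree of a rational parametrization of a conic.
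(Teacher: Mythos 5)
Your overall plan (algebraize the two families, map to $\mathbb{P}^1\times\mathbb{P}^1$, invert, read off bidegree $(2,2)$) matches the paper's strategy, but two of its load-bearing steps are missing or wrong. First, the algebraization: the Zariski closure of the image of an \emph{analytic} arc in the space of conics need not be one-dimensional, and your appeal to the finiteness hypothesis does not repair this, because that hypothesis only concerns conics whose intersection with the domain lies \emph{in the surface} $\Phi$ -- you never show that the conics appearing in the algebraic closure are contained in $\Phi$. The paper's Lemma~\ref{l-algebraic} fills exactly this hole: it considers the algebraic set $\gamma$ of \emph{all} conics meeting every $\beta_s$ transversely in a single point of a small neighbourhood $\Omega$, observes that any such conic meets $\Phi\cap\Omega$ in uncountably many points and hence lies in $\Phi$ by analyticity, and only then uses finiteness to force $\dim\gamma=1$. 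Second, your rationality claim for $B_i$ is based on a false principle: receiving a nonconstant (even Zariski-dense) holomorphic map from a planar domain does not make a curve rational -- an elliptic curve receives such maps from any disc. In the paper, rationality is earned through the chain Noether--Enriques (Lemma~\ref{l-uniruled}), the Enriques criterion that uniruled implies ruled via vanishing plurigenera (Lemma~\ref{l-ruled}), and L\"uroth (Lemma~\ref{l-rational}); none of this can be skipped.

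There is a further gap in your definition of $\pi$: for a one-dimensional algebraic family of conics that is not a \emph{linear} pencil, several members can pass through a generic point of $S$ (think of conics tangent to four general lines in $\mathbb{P}^2$), so ``send a point to the parameters of its two conics'' is not yet a rational map, and generic injectivity of the analytic parametrization does not settle this for the algebraic closure. The paper's Lemma~\ref{l-linear-pencil} -- its most technical step -- proves each family consists of level sets of a rational function, using that the members are numerically equivalent, that numerical equivalence implies linear equivalence on a smooth \emph{rational} surface (which is why rationality must be established first), and the finiteness hypothesis to exclude a linear system of dimension $\ge 2$ of degree-$2$ divisors. Only after that does the generically bijective map to $\mathbb{P}^1\times\mathbb{P}^1$ exist, its inverse is shown to be rational (Lemma~\ref{l-postulate}), and your final degree-bound argument (restriction to $v=\mathrm{const}$ parametrizes a conic, hence $u$-degree $\le 2$) is indeed essentially the paper's Lemma~\ref{l-final}. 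As it stands, your proposal correctly identifies the algebraization as an obstacle but does not overcome it, and it omits the rationality and linear-pencil machinery that the argument genuinely needs.
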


Conversely, one can see immediately that almost each surface~\eqref{eq-circular} contains two conic sections $u=\mathrm{const}$ and $v=\mathrm{const}$ through each point.
Theorem~\ref{schicho} in particular implies that each surface containing two conic sections through each point is contained in an projective subspace of dimension at most $8$ and has the degree at most $8$ (by standard elimination of variables).

Theorem~\ref{schicho} is proved completely analogously to \cite[Theorem~11]{schicho:2001}, where the case when $n=3$ is considered; see also \cite[Theorem~17]{Lubbes-14}. For convenience of the reader we give the proof in Section~\ref{sec:proofs2}. In comparison to \cite{schicho:2001} we have added some details 
to make it accessible to nonspecialists.

The main result of the paper is the following corollary also proved in Section~\ref{sec:proofs2}.

\begin{cor}\label{haupt}\label{param} 
Assume that through each point of
an analytic surface in $S^{n-1}$ (respectively, in $\mathbb{R}^n$)
one can draw two noncospheric circular arcs fully contained in the surface (and analytically depending on the point).
Assume that through each point in some dense subset of the surface one can draw only finitely many circular arcs fully contained in the surface.
Then the surface (possibly besides a one-dimensional subset) has a parametrization
\begin{equation*}
\Phi(u,v)=X_{0}(u,v):\dots:X_{n}(u,v),
\end{equation*}
where $X_{0},\dots,X_{n}\in\mathbb{R}_{22}$
satisfy the equation
\begin{equation}\label{eq-sphere}
X_{1}^2+\dots+X_{n}^2=X_{0}^2
\end{equation}
(respectively, the equation
$X_{1}^2+\dots+X_{n}^2=X_{0}Y$
for some $Y\in\mathbb{R}_{22}$). 
\end{cor}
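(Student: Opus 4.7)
The plan is to deduce both cases from Theorem~\ref{schicho}, proving the sphere case directly and reducing the Euclidean case to it via inverse stereographic projection.

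\textbf{Sphere case.} Complexify the real analytic surface in $S^{n-1}\subset\mathbb{RP}^n$ to an analytic surface in $\mathbb{CP}^n$; each analytic family of circular arcs extends to an analytic family of complex conic sections. On a dense open subset the two conics through a common point are transversal and meet only at that point (coplanar circles would be cospheric via their plane, so the tangency locus is proper; extra intersections disappear after shrinking the neighborhood), so Theorem~\ref{schicho} supplies a parametrization $\Phi=X_0:\dots:X_n$ with $X_i\in\mathbb{C}_{22}$ whose coordinate curves $u=\const$ and $v=\const$ are the two families of circles. Writing $X_i=A_i+\sqrt{-1}B_i$ with $A_i,B_i\in\mathbb{R}_{22}$, the reality of $\Phi(u,v)\in\mathbb{RP}^n$ on real $(u,v)$ forces $A_iB_0-A_0B_i$ to vanish on an open subset of $\mathbb{R}^2$, hence identically as a polynomial; consequently $(A_0:\dots:A_n)$ is an equivalent parametrization with entries in $\mathbb{R}_{22}$. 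Since $\Phi$ takes values in $S^{n-1}$ on the real parameter domain, the identity $\sum_{i=1}^{n}A_i^2=A_0^2$ holds on an open subset of $\mathbb{R}^2$ and therefore as a polynomial identity.

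\textbf{Euclidean case.} Apply an inverse stereographic projection $\iota:\mathbb{R}^n\hookrightarrow S^n$ from a point off the surface. Being a M\"obius transformation, $\iota$ sends circular arcs to circular arcs (and lines to circles through the pole), preserves noncosphericity, and preserves the finiteness hypothesis, so the image is an analytic surface in $S^n$ to which the sphere case applies and yields $\tilde X_0,\dots,\tilde X_{n+1}\in\mathbb{R}_{22}$ with $\sum_{i=1}^{n+1}\tilde X_i^2=\tilde X_0^2$. Since the stereographic projection $S^n\to\mathbb{R}^n$ has the form $y_i=\tilde X_i/(\tilde X_0-\tilde X_{n+1})$, setting
\begin{equation*}
X_0:=\tilde X_0-\tilde X_{n+1},\quad Y:=\tilde X_0+\tilde X_{n+1},\quad X_i:=\tilde X_i\ (i=1,\dots,n),
\end{equation*}
all in $\mathbb{R}_{22}$, produces $\sum_{i=1}^{n}X_i^2=\tilde X_0^2-\tilde X_{n+1}^2=X_0Y$, as required.

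The main obstacle is translating the corollary's circle-level hypotheses into the conic-level hypotheses demanded by Theorem~\ref{schicho}: generic transversality and single-point intersection of the two conics must be secured by removing a proper analytic locus, absorbed in the exceptional one-dimensional subset of the conclusion; more delicate is the finiteness of conic sections through a generic point, which can fail on surfaces carrying extra conic foliations (chiefly quadrics), and such classical cases must be handled separately by direct rational parametrization before Theorem~\ref{schicho} is invoked on the remainder.
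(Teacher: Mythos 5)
Your overall architecture matches the paper's: complexify, invoke Theorem~\ref{schicho}, restore reality of the coefficients, and reduce the Euclidean case to the spherical one by (inverse) stereographic projection with $X_0:=\tilde X_0-\tilde X_{n+1}$, $Y:=\tilde X_0+\tilde X_{n+1}$ --- that last step is exactly the paper's. However, there is a genuine gap at the heart of the reality step. Theorem~\ref{schicho} applied to the complexified surface $\bar\Phi$ produces a parametrization of the \emph{complex} surface by $(u,v)\in\mathbb{C}^2$; the original real surface $\Phi$ is then the image of some totally real two-dimensional subset of the complex parameter space, which need not be (and a priori has no reason to be) an open subset of $\mathbb{R}^2$ --- it could, for instance, be a locus like $v=\bar u$ or any other real form. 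This is precisely why the paper's Lemma~\ref{l-reduction} is careful to say that $(u,v)$ runs through some \emph{not open} subset of $\mathbb{C}^2$, and why a separate Lemma~\ref{l-make-real} is needed: one reparametrizes by complex fractional-linear changes of $u$ and of $v$ separately, normalizing three circular arcs of each family to the parameter values $0,\pm1$, and then uses the fact that in a quadratically parametrized conic the cross-ratio of four points equals the cross-ratio of their parameters, so that the real points of each circular arc acquire real parameters; only then does the real surface correspond to an open subset of $\mathbb{R}^2$. Your phrase ``the reality of $\Phi(u,v)\in\mathbb{RP}^n$ on real $(u,v)$'' silently assumes the conclusion of that lemma, so the asserted vanishing of $A_iB_0-A_0B_i$ on an open subset of $\mathbb{R}^2$ is unjustified as written, and with it the whole realification collapses.

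Once real parameters are secured, your real/imaginary-part argument (concluding that $(A_0:\dots:A_n)$ is an equivalent parametrization, modulo the degenerate cases $A_0\equiv 0$ or all $A_i\equiv 0$) is an acceptable and somewhat more elementary substitute for the paper's Lemma~\ref{l-real}, which instead extracts a common complex factor via unique factorization in $\mathbb{C}[u,v]$. Two smaller points: noncospheric arcs automatically meet transversally in a single point (two circles meeting twice, or tangentially, are cospheric), so no ``tangency locus'' needs to be removed; and your closing remark that surfaces with infinitely many conics ``must be handled separately'' is left entirely unexecuted --- the paper disposes of the hypotheses of Theorem~\ref{schicho} by analyticity in Lemma~\ref{l-reduction}, but in your write-up this remains a declared, unproved step.
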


This result reduces the above conjectures to 
solving the equation $X_1^2+\dots+X_{n-1}^2=X_{n}^2$ for ``Pythagorean $n$-tuples'' in real polynomials. 
The resulting problem is hard but seems more accessible because of possible induction over the involved parameters ($n$, number of variables, degree).
The problem has been solved for $n=3$ and $4$ in \cite[Theorem~2.2]{Dietz-etal} using
that $\mathbb{C}[u,v]$ is a unique factorization domain (UFD).
In case of one variable a similar argument works for $n=5$ and $6$ because $\mathbb{H}[u]$ is still a UFD in a sense \cite[Theorem~1 in Chapter~2]{Ore}, cf. \cite{Gelfand-etal-05, Motzkin-etal}, \cite[\S3.5]{Gentili-etal-13} and Conjecture~\ref{conj-1-var} below. The main difficulty of passing to two variables is that $\mathbb{H}[u,v]$ is \emph{not} a UFD any more; see Example~\ref{ex-Beauregard} below taken from~\cite{Beauregard-93}. 
The case of two variables and $n=6$ arising in our geometric problem seems to be the simplest case not accessible by the methods available before.


%
%


So far we give only one result in this direction, which is nice and interesting in itself, useful for the proof of the above conjectures and also for Theorem~\ref{prop21} above. The result is proved in Section~\ref{sec:final}.

\begin{splitting-lemma}\label{l-splitting-basic}
If $|Q(u,v)|^2=P(u)R(v)$ for some $Q\in\mathbb{H}_{11}$ and $P\in\mathbb{R}_{20}$, $R\in\mathbb{R}_{02}$ of degree~$2$
then $Q$ is reducible.
\end{splitting-lemma}

\begin{cor}\label{cor-splitting}
If $X_{1}^2+\dots+X_5^2=X_6^2$ for some $X_{1},\dots,X_4\in\mathbb{R}_{11}$,  $X_5,X_6\in\mathbb{R}_{22}$ such that $X_5+ X_6\in\mathbb{R}_{20}$ and $X_5-X_6\in \mathbb{R}_{02}$ have degree $2$ then for some $A\in\mathbb{H}_{10}$ and $B\in\mathbb{H}_{01}$ we have
$$X_1+iX_2+jX_3+kX_4=AB \quad\text{ or } \quad X_1+iX_2+jX_3+kX_4=BA.$$
\end{cor}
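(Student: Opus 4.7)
The plan is to repackage the real Pythagorean identity as a quaternionic norm equation and apply the Splitting Lemma~\ref{l-splitting-basic}. First I would introduce
\[
Q := X_1 + iX_2 + jX_3 + kX_4 \in \mathbb{H}_{11}.
\]
Since $X_1,\dots,X_4$ are real, $\bar Q = X_1 - iX_2 - jX_3 - kX_4$, and a direct calculation (in which all mixed quaternionic cross terms cancel in pairs) gives $|Q|^2 = Q\bar Q = X_1^2+X_2^2+X_3^2+X_4^2$. The hypothesis of the corollary then rewrites as
\[
|Q|^2 \;=\; X_6^2 - X_5^2 \;=\; (X_5+X_6)(X_6-X_5) \;=\; P(u)\,R(v),
\]
where $P := X_5+X_6 \in \mathbb{R}_{20}$ and $R := X_6-X_5 \in \mathbb{R}_{02}$ are, by assumption, of degree~$2$. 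This is exactly the hypothesis of the Splitting Lemma, so $Q = FG$ for some non-units $F, G \in \mathbb{H}[u,v]$.

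The second step is bidegree bookkeeping to identify the shape of $F$ and $G$. Writing $Q = \sum_{i,j \leq 1} q_{ij} u^i v^j$ with $q_{ij} \in \mathbb{H}$, the coefficient of $u^2 v^2$ in $Q \bar Q$ is $|q_{11}|^2$, so the fact that $PR$ has bidegree exactly $(2,2)$ forces $q_{11} \neq 0$, i.e.\ $Q$ has bidegree exactly $(1,1)$. The ring $\mathbb{H}[u,v]$ with $u, v$ central is an iterated polynomial ring over the skew field $\mathbb{H}$ and hence an integral domain; moreover, inspecting the leading monomial under any lexicographic order shows that bidegrees add under multiplication, since the leading coefficients are nonzero quaternions whose product is nonzero. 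Therefore $\mathrm{bideg}(F) + \mathrm{bideg}(G) = (1,1)$, and since $F, G$ are non-constant, the only possibility is the unordered pair $\{(1,0),(0,1)\}$. After relabeling, either $Q = AB$ with $A \in \mathbb{H}_{10}$, $B \in \mathbb{H}_{01}$, or $Q = BA$ with the same $A, B$, which is exactly the claimed alternative.

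All the real work sits in the Splitting Lemma; the corollary itself is just the norm identity above together with the degree count. I do not foresee an obstacle within this reduction: the main difficulty lies upstream, in proving the Splitting Lemma in the presence of the failure of unique factorization in $\mathbb{H}[u,v]$ that is flagged in Example~\ref{ex-Beauregard}. The only subtlety internal to the corollary is ensuring that the factorization produced by the lemma is nontrivial in the right bigraded sense, and this is precisely what the computation of the $u^2 v^2$ coefficient of $Q\bar Q$ guarantees.
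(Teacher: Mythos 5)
Your proof is correct and follows essentially the same route as the paper: the paper's proof is exactly the application of Splitting Lemma~\ref{l-splitting-basic} to $Q:=X_1+iX_2+jX_3+kX_4$ with $|Q|^2=(X_6+X_5)(X_6-X_5)$. Your bidegree bookkeeping merely makes explicit a step the paper leaves implicit (the proof of the Splitting Lemma in fact produces the factorization into an $\mathbb{H}_{10}$ and an $\mathbb{H}_{01}$ factor directly), so it is a welcome but not essentially different addition.
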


To summarize, the results of this paper reduce Main Conjecture~\ref{mainthm} to the following completely algebraic conjecture. 
By a \emph{M\"obius transformation} of $S^4$ we mean a linear transformation $\mathbb{R}^6\to\mathbb{R}^6$ which preserves the homogeneous equation $x_{1}^2+\dots+x_5^2=x_6^2$ of $S^4$.

\begin{conj}\label{conj-alg} 
Polynomials $X_{1},\dots,X_6\in\mathbb{R}_{22}$ satisfy equation $X_{1}^2+\dots+X_5^2=X_6^2$ if and only if
up to M\"obius transformation of $S^4$ \textup{(}not depending on $u,v$\textup{)} we have 
\begin{equation}\label{eq-param-pythagorean}
\begin{aligned}
X_1+iX_2+jX_3+kX_4&=2ABCD,\\
X_5&=(|B|^2-|AC|^2)D,\\
X_6&=(|B|^2+|AC|^2)D
\end{aligned}
\end{equation}
for some $A,B,C\in\mathbb{H}_{11}$, $D\in\mathbb{R}_{22}$ such that $|B|^2D,|AC|^2D\in\mathbb{R}_{22}$.
\end{conj}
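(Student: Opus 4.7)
\medskip

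\noindent\textbf{Proof proposal.}
The ``if'' direction is a direct verification. Given polynomials of the form~\eqref{eq-param-pythagorean}, multiplicativity of the quaternion norm together with the fact that the real polynomial $D$ commutes with quaternions gives
\begin{equation*}
X_1^2+X_2^2+X_3^2+X_4^2 = |2ABCD|^2 = 4|A|^2|B|^2|C|^2 D^2 = 4|AC|^2|B|^2 D^2,
\end{equation*}
while
\begin{equation*}
X_6^2-X_5^2 = (X_6-X_5)(X_6+X_5) = 2|AC|^2 D \cdot 2|B|^2 D = 4|AC|^2|B|^2 D^2,
\end{equation*}
so the two sides agree.

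For the converse, the plan is the following. Introduce $Q := X_1+iX_2+jX_3+kX_4 \in \mathbb{H}_{22}$, $P := X_6-X_5$ and $R := X_6+X_5$ in $\mathbb{R}_{22}$; the Pythagorean relation becomes $|Q|^2 = PR$. Discarding the degenerate case $Q=0$ (in which $X_5^2=X_6^2$ forces one of $|B|^2$, $|AC|^2$ to vanish in~\eqref{eq-param-pythagorean}), one is faced with a nontrivial factorization of a real polynomial of bidegree up to $(4,4)$ as a product of two real polynomials of bidegree up to $(2,2)$.

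Next, use the M\"obius group $O(5,1)$ of $S^4$, acting on the whole tuple $(X_1,\dots,X_6)$, to move to a canonical form in which $P$ has bidegree $(2,0)$ and $R$ has bidegree $(0,2)$. The relevant moves are $O(5)$ rotations of the $(X_1,\dots,X_5)$ block, hyperbolic rotations in the $(X_5,X_6)$-plane to shift leading bidegrees between $P$ and $R$, and parabolic translations to kill unwanted lower-order terms. Once such a normal form is reached, extract a common real factor $D \in \mathbb{R}[u,v]$ so that the reduced tuple $X_i/D$ ($i=1,\dots,4$) lies in $\mathbb{R}_{11}$ and the reduced $X_5/D, X_6/D$ satisfy the degree and bidegree requirements of Corollary~\ref{cor-splitting}. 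Applying Corollary~\ref{cor-splitting} then factors $Q/D = \tilde A \tilde B$ (or $\tilde B \tilde A$) with $\tilde A \in \mathbb{H}_{10}$ and $\tilde B \in \mathbb{H}_{01}$. Reassembling with $D$ and with any residual quaternion data absorbed from the normalization step yields a presentation of the form~\eqref{eq-param-pythagorean}, where two of the three factors $A,B,C \in \mathbb{H}_{11}$ are in fact of reduced bidegree and the third collects the remaining freedom. Strata where Step~2 cannot reach the generic normal form --- e.g., when $P$ or $R$ has lower bidegree, or when they share additional common real factors --- are to be handled by tailored ans\"atze (typically with some of $A,B,C,D$ constant) that still fit into~\eqref{eq-param-pythagorean}.

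The main obstacle will be the M\"obius normalization together with the ensuing orbit classification. The action of $O(5,1)$ on the variety of Pythagorean bidegree-$(2,2)$ $6$-tuples has a rich stratification, and the argument demands a complete list of orbits with an explicit normal form in each one. This difficulty is compounded by the fact that $\mathbb{H}[u,v]$ is not a UFD (Example~\ref{ex-Beauregard}): a factorization $|Q|^2=PR$ does not automatically lift to a factorization of $Q$ itself, which is precisely the subtlety that Splitting Lemma~\ref{l-splitting-basic} and Corollary~\ref{cor-splitting} resolve at bidegree $(1,1)$. A likely route is either to prove a direct analog of the Splitting Lemma at bidegree $(2,2)$, or to reduce inductively to the $(1,1)$ case using the M\"obius freedom established in the normalization step, using a valuation-theoretic or highest-weight argument to control how higher bidegrees factor.
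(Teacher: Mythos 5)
You are attempting to prove a statement that the paper itself does not prove: Conjecture~\ref{conj-alg} is stated as a conjecture, and the authors explicitly defer its proof to a subsequent publication, so there is no proof in the paper to compare against. The only tools the paper supplies in this direction are Splitting Lemma~\ref{l-splitting-basic} and Corollary~\ref{cor-splitting}, which you correctly identify as relevant. Your ``if'' direction is correct (and is the routine norm computation, modulo the remark that a M\"obius transformation of $S^4$ preserves the form $x_1^2+\dots+x_5^2-x_6^2$, so it preserves the Pythagorean relation).

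The converse, which is the entire content of the statement, remains unproved in your proposal, and its central step is not merely unjustified but untenable as stated. You claim that the $O(5,1)$-action (a constant linear change of the $6$-tuple) can always be used to reach a normal form with $X_6-X_5\in\mathbb{R}_{20}$, $X_6+X_5\in\mathbb{R}_{02}$, and --- as Corollary~\ref{cor-splitting} additionally requires --- $X_1,\dots,X_4\in\mathbb{R}_{11}$. Forcing one linear combination of six generic bidegree-$(2,2)$ polynomials to lose all $v$-dependence already imposes more linear conditions than the available parameters, and the conjectured normal form \eqref{eq-param-pythagorean} itself generically has $X_6-X_5=2|AC|^2D$ and $X_6+X_5=2|B|^2D$ of full bidegree $(2,2)$; so the split situation of Corollary~\ref{cor-splitting} is only a thin stratum, and the ``tailored ans\"atze'' you defer to are precisely the unsolved bulk of the conjecture, not boundary cases. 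Your fallback suggestion of a direct bidegree-$(2,2)$ analog of the Splitting Lemma is refuted by Example~\ref{ex-Beauregard} in the paper: $Q=u^2v^2-1+(u^2-v^2)i+2uvj$ is irreducible in $\mathbb{H}[u,v]$ even though $|Q|^2=(u^4+1)(v^4+1)$ factors into variable-separated real polynomials; since $(u^4+1)(v^4+1)$ also factors into two real bidegree-$(2,2)$ polynomials, this $Q$ even embeds into a Pythagorean $6$-tuple in $\mathbb{R}_{22}$ whose quaternionic part is irreducible, so any proof must exploit the M\"obius mixing of all six components in an essential way --- exactly the step your plan leaves open. In sum, only the easy direction is established; the hard direction is a strategy sketch whose key reduction fails in general.
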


The parametrizations of Conjectures~\ref{conj-alg} and~\ref{4DBconj} are related to each other via the stereographic projection $X_1:\dots:X_6\mapsto (X_1,X_2,X_3,X_4)/(X_6-X_5)$. 

We plan to prove all the above conjectures in a subsequent publication. This is reasonable because the remaining algebraic part of the conjectures is proved by very different methods.


\section{Classification results}\label{sec:proofs}

In this section we prove the results on the classification of surfaces parametrized by quaternionic rational functions of small degree up to M\"obius transformation: Theorem~\ref{prop21} and Corollary~\ref{cor-3D}.

Let us prove several lemmas required for the proof of Theorem~\ref{prop21}. These lemmas are independent in the sense that the proof of each one uses the statements but not the proofs of the other ones.

In what follows $A,B\in\mathbb{H}_{11}$ are arbitrary polynomials not vanishing identically. Linear homogeneous polynomials $\tilde A,\tilde B\in\mathbb{H}[u,v,w,s]$ are defined by the formulae $A(u,v)=\tilde A(u,v,uv,1)$ and $B(u,v)=\tilde B(u,v,uv,1)$. By a \emph{possibly degenerate surface} we mean
an analytic map of a planar domain into $\mathbb{R}^n$ not necessarily with nondegenerate differential, 
and also the image of the map, 
if no confusion arises. A \emph{possibly degenerate hypersurface} is defined analogously.



First we consider the (possibly degenerate) surface $\Phi(u,v)=A(u,v)B(u,v)^{-1}$ being a particular case of surface~\eqref{eq-4DBconj}. We are going to estimate its degree.
For that we estimate the degree of the (possibly degenerate) hypersurface $$\tilde\Phi(u,v,w,s)=\tilde A(u,v,w,s)\tilde B(u,v,w,s)^{-1}.$$
It has the rational parametrization $\tilde \Phi=\tilde A\overline{\tilde B}/|\tilde B|^2$ of degree at most $2$ in each variable, and by elimination of variables (not used in the paper)
it has degree at most $8$. We prove a much sharper estimate.

\begin{lem}\label{l-3folddeg4} 
The (possibly degenerate) hypersurface $\tilde A\tilde B^{-1}$
is contained in an algebraic hypersurface of degree at most $4$.
\end{lem}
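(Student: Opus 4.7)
The plan is to derive an implicit equation for the image $\tilde A \tilde B^{-1}\subset\mathbb{H}$ by rewriting the parametrization as a single homogeneous $\mathbb{H}$-linear equation in the real parameters $u,v,w,s$ whose coefficients depend linearly on the components of $q\in\mathbb{H}$, and then imposing that this equation admit a nontrivial solution. Concretely, a point $q$ lies in the image precisely when there exists $(u,v,w,s)\neq 0$ with $\tilde B(u,v,w,s)\neq 0$ and $q\,\tilde B(u,v,w,s)=\tilde A(u,v,w,s)$.

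Writing $\tilde A=\alpha_1 u+\alpha_2 v+\alpha_3 w+\alpha_4 s$ and $\tilde B=\beta_1 u+\beta_2 v+\beta_3 w+\beta_4 s$ with fixed $\alpha_i,\beta_i\in\mathbb{H}$, the relation $q\tilde B-\tilde A=0$ becomes
\[
(q\beta_1-\alpha_1)\,u+(q\beta_2-\alpha_2)\,v+(q\beta_3-\alpha_3)\,w+(q\beta_4-\alpha_4)\,s=0.
\]
For each $\beta_i$, right multiplication $r\mapsto r\beta_i$ is an $\mathbb{R}$-linear endomorphism of $\mathbb{H}$, so after identifying $\mathbb{H}$ with $\mathbb{R}^4$ via $q=q_0+q_1 i+q_2 j+q_3 k$ the four quaternions $q\beta_i-\alpha_i$ become the columns of a $4\times 4$ real matrix $M(q)$ whose entries are affine-linear in $(q_0,q_1,q_2,q_3)$. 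Existence of a nonzero $(u,v,w,s)^{\mathsf T}$ in the kernel of $M(q)$ is equivalent to $\det M(q)=0$, a real polynomial relation of total degree at most $4$ in the coordinates of $q$. This is the algebraic hypersurface of degree $\le 4$ claimed in the lemma.

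The main point to address is that $\det M$ not be the zero polynomial; otherwise the procedure would yield only the trivial relation $0=0$. Identical vanishing would mean $M(q)x=0$ has a nontrivial solution for every $q\in\mathbb{H}$, and for generic $q$ the accompanying $x$ would satisfy $\tilde B(x)\neq 0$ and hence $q=\tilde A(x)\tilde B(x)^{-1}$ would lie in the image — forcing the image to cover all of $\mathbb{H}$, which contradicts its real dimension being at most $3$. Thus the argument succeeds whenever $\tilde A\tilde B^{-1}$ is genuinely three-dimensional. The remaining degenerate situations, in which $\tilde A\tilde B^{-1}$ drops dimension (for instance because $\tilde A$ and $\tilde B$ share a common linear factor, or because the coefficients $\alpha_i,\beta_i$ satisfy extra $\mathbb{R}$-linear dependencies), still embed the image into a hypersurface of degree strictly less than $4$ obtained from the extra linear or quadratic relation, so no genuine obstacle arises. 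Handling this case distinction cleanly, and verifying that $\det M$ really has degree $4$ (rather than dropping accidentally) is the one piece of the proof I expect to require care.
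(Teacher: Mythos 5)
Your main construction is the same as the paper's: view $q\tilde B-\tilde A=0$ as a $4\times 4$ real homogeneous linear system in $(u,v,w,s)$ whose coefficients are affine-linear in the coordinates of $q$, and take $\det M(q)=0$ as the degree $\le 4$ equation. The gap is in your treatment of the case $\det M\equiv 0$. You argue that identical vanishing would force the image to be all of $\mathbb{H}$, because ``for generic $q$ the accompanying kernel vector $x$ satisfies $\tilde B(x)\neq 0$''. That claim is unjustified and in fact false in exactly the situation that matters: if $\tilde A$ and $\tilde B$ have a common nonzero real zero $\hat x$ (nothing in the hypotheses excludes this), then $\hat x$ lies in the kernel of $M(q)$ for \emph{every} $q$, so $\det M\equiv 0$, while for generic $q$ the kernel is just the line through $\hat x$, on which $\tilde B$ vanishes; no point of the complement of the image is thereby forced into the image, and no contradiction with the dimension of the image arises. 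The correct conclusion from $\det M\equiv 0$ (the one drawn in the paper) is precisely that $\tilde A$ and $\tilde B$ share a nonzero zero, not that this case cannot occur.

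Consequently the degenerate case is a genuine branch of the proof, not something to be dismissed as ``an extra linear or quadratic relation'', and you explicitly leave it unproved. The paper handles it by using the common zero to perform a linear change of parameters after which $\tilde A,\tilde B$ depend on only three variables, then replacing the determinant by the four $3\times 3$ minors of the resulting $4\times 3$ system (giving degree $\le 3$ equations if some minor is not identically zero), and iterating down to two variables ($2\times 2$ minors) and finally to one variable, where the image is a single point and any hyperplane through it works. Without this (or an equivalent) argument your proof only covers the case $\det M\not\equiv 0$. Your other worry, that $\det M$ might have degree strictly less than $4$, is harmless: the lemma only asserts degree at most $4$.
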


\begin{proof} If a point $t+ix+jy+kz\in\mathbb{H}$ is contained in the hypersurface $\tilde A\tilde B^{-1}$ then $\tilde A(u,v,w,s)-(t+ix+jy+kz)\cdot\tilde B(u,v,w,s)=0$ for some $u,v,w,s\in\mathbb{R}$ not vanishing simultaneously. The latter quaternionic equation can be considered as a system of $4$ real linear homogeneous equations in the variables $u,v,w,s$ with the coefficients linearly depending on the parameters $x,y,z,t$. The system has a nonzero solution if and only if the determinant of the system vanishes, which gives an algebraic equation in $x,y,z,t$ of degree at most $4$. The algebraic equation defines the required algebraic hypersurface unless the determinant vanishes identically.

Assume that the determinant vanishes identically. Then the system has a nonzero solution for each point $(x,y,z,t)\in\mathbb{R}^{4}$. But the set of values of the fraction $\tilde A\tilde B^{-1}$ is at most three-dimensional. One can have a nonzero solution for each point in $\mathbb{R}^{4}$ only if $\tilde A(\hat u,\hat v,\hat w,\hat s)=\tilde B(\hat u,\hat v,\hat w,\hat s)=0$ for some nonzero $(\hat u,\hat v,\hat w,\hat s)\in\mathbb{R}^{4}$. Assume that $\hat s\ne 0$ without loss of generality. Then by the linearity
\begin{align*}
\tilde A(u,v,w,s)&=\tilde A(\hat s u-\hat us, \hat s v-\hat vs, \hat s w-\hat ws,0)/\hat s\\
\tilde B(u,v,w,s)&=\tilde B(\hat s u-\hat us, \hat s v-\hat vs, \hat s w-\hat ws,0)/\hat s.
\end{align*}
Performing a linear change of the parameters $u,v,w,s$ we may assume that both $\tilde A(u,v,w,s)$ and $\tilde B(u,v,w,s)$ do not depend on $s$. Further denote by $\tilde A(u,v,w)$ and $\tilde B(u,v,w)$ the resulting linear polynomials, defining the same hyperfurface $\tilde A\tilde B^{-1}$ as the initial ones.

Then the above equation takes the form $\tilde A(u,v,w)-(t+ix+jy+kz)\cdot\tilde B(u,v,w)=0$. Consider the equation as a system of $4$ real linear homogeneous equations in the variables $u,v,w$. The system has a nonzero solution if and only if all the $3\times 3$ minors of the system vanish, which gives four algebraic equations in $x,y,z,t$ of degree at most $3$. If at least one of the $3\times 3$ minors does not vanish identically then it defines
the required algebraic surface. If all the $3\times 3$ minors vanish identically then repeat the argument of the previous paragraph to get
linear homogeneous $\tilde A(u,v)$ and $\tilde B(u,v)$ depending only on $2$ variables. Again, we either get the required algebraic surface or proceed to the case when $\tilde A(u)$ and $\tilde B(u)$ depend only on $1$ variable. In the latter case any hyperplane passing through the point $\tilde A(1)\tilde B(1)^{-1}$ is the required algebraic hypersurface.
\end{proof}

We consider the case when the constructed algebraic hypersurface degenerates to $\mathrm{Im}\mathbb{H}$ separately.

\begin{lem}\label{l-3fold-R3} 
Assume that the (possibly degenerate) hypersurface $\tilde A\tilde B^{-1}$ is contained in the hyperplane $\mathrm{Im}\mathbb{H}$. Then the map $\tilde A\tilde B^{-1}$ is a composition of a map of the form either $\tilde C\tilde D\tilde C^{-1}$ or $\tilde E\tilde F^{-1}$ with a M\"obius transformation of $\mathrm{Im}\mathbb{H}$ (with constant coefficients), where $\tilde C\in\mathbb{H}[u,v,w,s]$, $\tilde E\in\mathrm{Im}\mathbb{H}[u,v,w,s]$, $\tilde F\in\mathbb{R}[u,v,w,s]$ are linear homogeneous, and $\tilde D\in\mathrm{Im}\mathbb{H}$.
\end{lem}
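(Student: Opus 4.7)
The approach is to use Möbius transformations of $\im\H$, which are realized by quaternionic fractional linear maps $q\mapsto(aq+b)(cq+d)^{-1}$ preserving $\im\H$; composing such a map with $\tilde A\tilde B^{-1}$ replaces the pair $(\tilde A,\tilde B)$ by $(a\tilde A+b\tilde B,\,c\tilde A+d\tilde B)$. The task is to choose $a,b,c,d\in\H$ so that the new pair has the structure required by Form 1 or Form 2.

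First, reformulate the hypothesis. The condition $\tilde A\tilde B^{-1}\in\im\H$ pointwise is equivalent to the polynomial identity $\tilde A\overline{\tilde B}+\tilde B\overline{\tilde A}\equiv 0$. Viewing $\tilde A,\tilde B\colon\R^4\to\H\cong\R^4$ as $\R$-linear maps with real $4\times 4$ matrices $A,B$, this is equivalent to $A^T B$ being skew-symmetric, i.e.\ $A^T B\in\mathfrak{so}(4)\cong\im\H_L\oplus\im\H_R$ under the splitting $(\alpha,\beta)\leftrightarrow(q\mapsto\alpha q+q\beta)$.

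Next, split into two cases. \emph{Case~1 (leads to Form~1):} there exist $c,d\in\H$, not both zero, with $c\tilde A+d\tilde B\in\R[u,v,w,s]$. Setting $\tilde F:=c\tilde A+d\tilde B$ and extending $(c,d)$ to $(a,b,c,d)$ so that the matrix $\begin{pmatrix}a&b\\c&d\end{pmatrix}$ defines a Möbius of $\im\H$, the transformed fraction $(a\tilde A+b\tilde B)\tilde F^{-1}$ has real-valued denominator; since it still takes values in $\im\H$ and $\tilde F$ is real, the new numerator $\tilde E:=a\tilde A+b\tilde B$ must lie in $\im\H[u,v,w,s]$, giving Form~1. \emph{Case~2 (leads to Form~2):} no such $(c,d)$ exists. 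I argue that this obstruction forces $A^T B$ to lie in a single summand (say $\im\H_R$) of the $\mathfrak{so}(4)$-splitting. This localizes the image of $\tilde A\tilde B^{-1}$ onto a 2-sphere in $\im\H$; after a preliminary Möbius of $\im\H$ centering that sphere at the origin we may assume $|\tilde A|^2=r^2|\tilde B|^2$ identically. Combined with $A^T B\in\im\H_R$, this forces the polynomial identity $\tilde A=\tilde B\tilde D$ for some constant $\tilde D\in\im\H$ with $|\tilde D|=r$, and then $\tilde A\tilde B^{-1}=\tilde B\tilde D\tilde B^{-1}$ is Form~2 with $\tilde C=\tilde B$.

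The main obstacle is Case~2: both the localization of $A^T B$ into one $\im\H$-summand and the subsequent factorization $\tilde A=\tilde B\tilde D$ require careful linear-algebraic work. The localization rests on analyzing the linear map $(c,d)\mapsto\im(c\tilde A+d\tilde B)\colon\R^8\to\R^{12}$ under the constraint $A^T B\in\mathfrak{so}(4)$, showing that a nontrivial obstruction to solving $\im(c\tilde A+d\tilde B)=0$ is equivalent to $A^T B$ having trivial projection onto one $\mathfrak{sp}(1)$ factor. The factorization is then a splitting-type argument (in the spirit of the Splitting Lemma) that extracts a right-$\H$-multiplication structure from the quadratic identity $|\tilde A|^2=r^2|\tilde B|^2$ together with the one-sided skew-symmetry of $A^T B$.
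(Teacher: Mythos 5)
Your reformulation of the hypothesis as skew-symmetry of $A^TB$ and your Case~1 are plausible (though even in Case~1 you owe an argument that a pair $(c,d)$ with $c\tilde A+d\tilde B$ real-valued can be completed to a fractional-linear map that \emph{preserves} $\im\H$: this forces the pole $-c^{-1}d$ to lie in $\im\H\cup\{\infty\}$, which is not automatic from your hypothesis and needs a separate justification, and the lemma also covers degenerate hypersurfaces where such an argument is less clear). The genuine gap is the pivot of Case~2, which you yourself defer to ``careful linear-algebraic work'': the claimed equivalence between the nonexistence of a nonzero solution of $\im(c\tilde A+d\tilde B)=0$ and the vanishing of one $\mathfrak{sp}(1)$-component of $A^TB$ is false. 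Take $\tilde B(u,v,w,s)=u+vi+wj+2sk$ and $\tilde A=\tilde B\,k$. Then $\tilde A\tilde B^{-1}=\tilde B\,k\,\tilde B^{-1}$ takes values in the unit sphere of $\im\H$, so the hypothesis of the lemma holds. No nonzero $(c,d)$ makes $c\tilde A+d\tilde B$ real-valued: since $\tilde B$ is onto $\H$, this would force the linear map $p\mapsto cpk+dp$ of $\H$ to have rank at most $1$, but its kernel is invariant under right multiplication by $k$, hence even-dimensional, and rank $0$ is impossible for $(c,d)\ne(0,0)$. So this example lies in your Case~2; nevertheless $A^TB$ corresponds to the $2$-form $2\,e_u\wedge e_s-e_v\wedge e_w$ on the parameter space, whose self-dual and anti-self-dual parts are both nonzero, i.e.\ $A^TB$ projects nontrivially onto \emph{both} factors of $\mathfrak{so}(4)$.

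The failure is structural: ``no real-valued left combination of $\tilde A,\tilde B$ exists'' is invariant under arbitrary real linear changes of $(u,v,w,s)$ and under right multiplication of the pair by a constant quaternion, whereas ``$A^TB$ lies in a single summand'' survives only conformal parameter changes (conjugation by $\mathrm{diag}(1,1,1,2)$ already destroys it), so no such equivalence can hold. Consequently neither the localization of the image onto a $2$-sphere nor the factorization $\tilde A=\tilde B\tilde D$ is established by your route; note that in the example $\tilde A=\tilde B k$ does hold while $A^TB$ is not a pure right multiplication, so the implication you would need in the final splitting step fails in the other direction as well. The paper's proof avoids this entirely: it projects stereographically to $S^3$, deduces $|\tilde A+\tilde B|=|\tilde A-\tilde B|$, extends the induced partial isometry between the coefficient spans of $\tilde A-\tilde B$ and $\tilde A+\tilde B$ to an isometry of $\H$, which must be $q\mapsto cqd$ or $q\mapsto c\bar qd$, and these two cases yield the forms $\tilde C\tilde D\tilde C^{-1}$ and $\tilde E\tilde F^{-1}$ directly. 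If you want to keep your two-case scheme, Case~2 needs a genuinely different argument, e.g.\ one along these lines or a normalization of the skew form by a parameter change together with a reproof of the final factorization under that normalization.
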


\begin{proof} The inverse stereographic projection
$
\im\H \to S^3, q \mapsto (q+1)(q-1)^{-1},
$
maps the hypersurface $\tilde A\tilde B^{-1}$ contained in $\im\mathbb{H}$ to the hypersurface $(\tilde A+\tilde B)(\tilde A-\tilde B)^{-1}$ contained in $S^3$. Thus $|\tilde A+\tilde B|=|\tilde A-\tilde B|$ identically. In particular, for each $(u,v,w,s)\in\mathbb{R}^4$ the condition $\tilde A(u,v,w,s)-\tilde B(u,v,w,s)=0$ implies the condition $\tilde A(u,v,w,s)+\tilde B (u,v,w,s)=0$.

Denote $\tilde A(u,v,w,s)-\tilde B(u,v,w,s)=:a_1u+a_2v+a_3w+a_4s$
and $\tilde A(u,v,w,s)+\tilde B(u,v,w,s)=:b_1u+b_2v+b_3w+b_4s$. Define a real linear map from the linear span of $a_1,a_2,a_3,a_4\in\mathbb{H}$ into the linear span of $b_1,b_2,b_3,b_4\in\mathbb{H}$ by the formula
$a_1u+a_2v+a_3w+a_4s\mapsto b_1u+b_2v+b_3w+b_4s$. The map is well-defined because the condition $a_1u+a_2v+a_3w+a_4s=0$ implies $b_1u+b_2v+b_3w+b_4s=0$. The map is isometric because $|\tilde A+\tilde B|=|\tilde A-\tilde B|$. Extend it to an isometry $\mathbb{H}\to\mathbb{H}$. Each isometry $\mathbb{H}\to\mathbb{H}$ has one of the forms $q\mapsto cqd$ or $q\mapsto c\bar qd$ for some $c,d\in S^3$ \cite[Theorem~3.2]{Lavicka-etal-07}. Therefore either $\tilde A+\tilde B=c(\tilde A-\tilde B)d$ or $\tilde A+\tilde B=c\overline{(\tilde A-\tilde B)}d$.

In the former case set $\tilde C:=\tilde A-\tilde B$ and $\tilde D:=(d+1)(d-1)^{-1}$. We have $(\tilde A+\tilde B)(\tilde A-\tilde B)^{-1}=c \tilde C d \tilde C^{-1}$. Up to a M\"obius transformation this is $\tilde C d \tilde C^{-1}$, which projects stereographically to $\tilde C \tilde D \tilde C^{-1}$.

In the latter case set $\tilde E:=\mathrm{Im}(\tilde B\bar d-\tilde A\bar d)$ and $\tilde F:=\mathrm{Re}(\tilde A\bar d-\tilde B\bar d)$. We have $(\tilde A+\tilde B)(\tilde A-\tilde B)^{-1}=-c \bar d (\tilde E+\tilde F)(\tilde E-\tilde F)^{-1}$. Up to a M\"obius transformation this is $(\tilde E+\tilde F)(\tilde E-\tilde F)^{-1}$, which projects stereographically to $\tilde E \tilde F^{-1}$.
\end{proof}

\begin{lem}\label{l-quadrics} The (possibly degenerate) surfaces $C D C^{-1}$ and $ E F^{-1}$, where $ C\in\mathbb{H}_{11}$, $E\in\mathrm{Im}\mathbb{H}_{11}$, $F\in\mathbb{R}_{11}$, and $D\in\mathrm{Im}\mathbb{H}$, are contained in certain quadrics in $\mathrm{Im}\mathbb{H}$.
\end{lem}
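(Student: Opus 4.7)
The plan is to handle the two parametrized families separately, since they have quite different structures.

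First, for $\Phi=CDC^{-1}$ with $D\in\mathrm{Im}\mathbb{H}$ constant, I would show $\Phi$ lies on a concentric sphere in $\mathrm{Im}\mathbb{H}$. A direct calculation using $\bar D=-D$ and $\overline{C^{-1}}=C/|C|^2$ gives $\overline{CDC^{-1}}=-CDC^{-1}$, confirming that the image lies in $\mathrm{Im}\mathbb{H}$. The multiplicativity of the quaternionic norm then gives $|CDC^{-1}|^2=|C|^2|D|^2/|C|^2=|D|^2$, a constant in $u,v$. So $\Phi$ is contained in the sphere $\{q\in\mathrm{Im}\mathbb{H}:|q|^2=|D|^2\}$, which is a quadric in $\mathrm{Im}\mathbb{H}$ (reducing to the origin when $D=0$, which is trivially contained in a quadric).

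Second, for $\Phi=EF^{-1}$ with $E\in\mathrm{Im}\mathbb{H}_{11}$ and $F\in\mathbb{R}_{11}$, I would factor the parametrization through the Segre embedding. Write $E=\mathbf{i}E_1+\mathbf{j}E_2+\mathbf{k}E_3$ with $E_r\in\mathbb{R}_{11}$, so the three coordinates of $\Phi$ in $\mathrm{Im}\mathbb{H}\cong\mathbb{R}^3$ are $(E_1/F,E_2/F,E_3/F)$. Since $E_1,E_2,E_3,F$ are real linear combinations of the basis $1,u,v,uv$ of $\mathbb{R}_{11}$, the map $(u,v)\mapsto(E_1:E_2:E_3:F)\in\mathbb{P}^3$ is the composition of the Segre map $(u,v)\mapsto(1:u:v:uv)$ with a real projective-linear transformation $L\colon\mathbb{P}^3\to\mathbb{P}^3$. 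The Segre map lands on the quadric $\{x_1x_4=x_2x_3\}$, so its image under $L$ is a quadric $Q\subset\mathbb{P}^3$; intersecting $Q$ with the affine chart $\{F\ne 0\}$ yields a quadric in $\mathbb{R}^3$ containing $\Phi$, provided $L$ is a linear isomorphism. If $L$ is not an isomorphism, then $E_1,E_2,E_3,F$ are linearly dependent over $\mathbb{R}$, giving a nontrivial relation $a_1E_1+a_2E_2+a_3E_3+a_4F=0$; dividing by $F$ shows $\Phi$ lies on the plane $a_1x+a_2y+a_3z+a_4=0$, a degenerate quadric.

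The main expected obstacle is essentially bookkeeping: making precise what ``quadric'' means in the degenerate subcases so that the statement remains usable downstream in the proof of Theorem~\ref{prop21}. Interpreting a quadric as the zero set of a real polynomial of degree at most two (allowing reducible and degenerate loci, in particular planes) is the natural convention here and raises no conceptual difficulty.
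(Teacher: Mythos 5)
Your proof is correct and takes essentially the same route as the paper: the paper likewise notes that $CDC^{-1}$ lies on the sphere $\{q\in\mathrm{Im}\mathbb{H}:|q|=|D|\}$ and that $EF^{-1}$, being a rational surface of degree at most $1$ in each variable, lies on a quadric. Your Segre-embedding factorization merely spells out explicitly the bidegree-$(1,1)$ step that the paper states without proof, including the degenerate (planar) case.
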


\begin{proof} The  surface $CD C^{-1}$ is a subset of the sphere $\{q\in\mathrm{Im}\mathbb{H}:|q|=|D|\}$. The  surface $ E F^{-1}$ is a rational surface in $\mathbb{R}^3$ of degree at most $1$ in each variable, hence a subset of a quadric.
\end{proof}

\begin{lem}\label{l-deg4} 
If the  (possibly degenerate) surface $AB^{-1}$ is contained in $\mathrm{Im}\mathbb{H}$ then it is contained in an irreducible algebraic surface of degree at most $4$.
\end{lem}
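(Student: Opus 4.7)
The plan is to combine the three preceding lemmas, splitting into cases according to whether the $3$-dimensional map $\tilde A\tilde B^{-1}$ has its image entirely contained in $\im\H$ or not. I will freely use two general facts: since $A(u,v)=\tilde A(u,v,uv,1)$ and similarly for $B$, the map $AB^{-1}$ factors through $\tilde A\tilde B^{-1}$, so its image lies inside the image of $\tilde A\tilde B^{-1}$; and the Zariski closure in complex projective space of the image of $AB^{-1}$ is automatically irreducible, being the Zariski closure of the image of the irreducible variety $\mathbb{C}^2$ under a rational map.

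First I would apply Lemma~\ref{l-3folddeg4} and pass to the Zariski closure of $\tilde A\tilde B^{-1}$ to obtain an irreducible algebraic subset $V\subset\H$ of degree at most~$4$ containing the image of $\tilde A\tilde B^{-1}$. If $V\ne\im\H$, then $V\cap\im\H$ is a proper algebraic subset of $\im\H$, of pure dimension~$2$ and degree at most~$4$ by the standard hyperplane section bound, and by the factoring fact together with $AB^{-1}\subset\im\H$ it contains $AB^{-1}$. An irreducible component of $V\cap\im\H$ containing the Zariski closure of $AB^{-1}$ is then the required irreducible algebraic surface of degree at most~$4$.

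In the remaining case $V\subset\im\H$, Lemma~\ref{l-3fold-R3} writes $\tilde A\tilde B^{-1}=\mu\circ\tilde C\tilde D\tilde C^{-1}$ or $\mu\circ\tilde E\tilde F^{-1}$ for some M\"obius transformation $\mu$ of $\im\H$ with constant coefficients. Substituting $w=uv$ and $s=1$ yields $AB^{-1}=\mu\circ CDC^{-1}$ or $\mu\circ EF^{-1}$ with $C\in\H_{11}$, $D\in\im\H$, $E\in\im\H_{11}$, $F\in\R_{11}$. By Lemma~\ref{l-quadrics} each of $CDC^{-1}$ and $EF^{-1}$ lies on a quadric in $\im\H$, so $AB^{-1}$ lies on the $\mu$-image of a quadric. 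The last ingredient is the standard fact that a M\"obius transformation of $\im\H\cong\R^3$ carries a quadric to a Darboux cyclide: decomposing $\mu$ into rigid motions and at most one inversion $q\mapsto q/|q|^2$, substituting the inversion into the quadric equation and then clearing the denominator $(x^2+y^2+z^2)^2$ produces a polynomial of degree at most~$4$ in $x,y,z$. Hence $AB^{-1}$ lies in a Darboux cyclide, and an irreducible component of it containing the Zariski closure of $AB^{-1}$ completes the argument.

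The plan is a straightforward case analysis; I foresee only two minor bookkeeping issues rather than a genuine obstacle. The first is ensuring the correct dimensions when the Zariski closure of $\tilde A\tilde B^{-1}$ (or of $AB^{-1}$) is not of maximal dimension, which is handled uniformly by passing to the appropriate irreducible components. The second is the well-known degree-doubling behaviour of inversions, which is a one-line calculation with the substitution $x\mapsto x/(x^2+y^2+z^2)$ etc.
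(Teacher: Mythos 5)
Your proposal is correct and follows essentially the same route as the paper: Lemma~\ref{l-3folddeg4} gives an irreducible degree-$\le 4$ hypersurface containing $\tilde A\tilde B^{-1}$, one splits into the cases where it differs from or coincides with $\mathrm{Im}\mathbb{H}$, taking a component of the hyperplane section in the first case and invoking Lemmas~\ref{l-3fold-R3} and~\ref{l-quadrics} (M\"obius image of a quadric, hence degree $\le 4$) in the second. Your extra bookkeeping (Zariski closure of the image for irreducibility, the explicit inversion/degree-doubling computation showing a M\"obius transform of a quadric is a Darboux cyclide) only makes explicit what the paper leaves implicit.
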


\begin{proof} By Lemma~\ref{l-3folddeg4} the hypersurface $\tilde A\tilde B^{-1}$, and hence $AB^{-1}$, is contained in an irreducible algebraic hypersurface of degree $\le 4$. One of the components of the intersection of the algebraic hypersurface with the hyperplane $\mathrm{Im}\mathbb{H}$ is the required surface unless the algebraic hypersurface coincides with the hyperplane.
In the latter case the assumptions of Lemma~\ref{l-3fold-R3} are satisfied. By Lemmas~\ref{l-3fold-R3} and~\ref{l-quadrics} the surface $AB^{-1}$ is a M\"obius transformation of a quadric, hence has degree $\le 4$.
\end{proof}

Now the following folklore lemma allows us to prove a particular case of Theorem~\ref{prop21}. 
The lemma is a particular case of \cite[Theorem~11]{Krasauskas-etal-13}. 
We give the proof for convenience of the reader.

\begin{lem}\label{l-cyclide} 
If two irreducible algebraic surfaces of degree at most $4$ in $\mathbb{R}^3$ are symmetric with respect to the unit sphere then they both are Darboux cyclides.
\end{lem}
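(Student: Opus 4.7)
The plan is to encode the inversive symmetry in a single polynomial identity and to read off the Darboux cyclide structure from its divisibility pattern by $r:=x^2+y^2+z^2$. Let $F,G\in\mathbb{R}[x,y,z]$ be the irreducible polynomials (of degrees $d_1,d_2\le 4$) defining the two surfaces, and decompose $F=\sum_{i=0}^{d_1}F_i$ into homogeneous components. The hypothesis says $p\in V(F)\iff p/r\in V(G)$ whenever $r(p)\ne 0$, so the object to study is
\[
P(x,y,z):=r^{d_1}F(x/r,y/r,z/r)=\sum_{i=0}^{d_1}F_i(x,y,z)\,r^{d_1-i}.
\]
The summands are homogeneous of pairwise distinct degrees $2d_1-i$, so there is no cancellation and $\deg P=2d_1-i_0$, where $i_0$ is the smallest index with $F_{i_0}\ne 0$.

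First I would pass to $\mathbb{C}[x,y,z]$ and observe that $V(P)$ and $V(G)$ agree off the irreducible isotropic cone $V(r)$. The Nullstellensatz, together with irreducibility of both $G$ and $r$ in $\mathbb{C}[x,y,z]$, then produces a factorization $P=c\,r^a\,G^m$ with $c\ne 0$ and $a,m\ge 0$. Since the inversion $p\mapsto p/r$ is a birational involution of $\mathbb{C}^3$ restricting to an isomorphism $V(F)\setminus V(r)\to V(G)\setminus V(r)$, the multiplicity of $G$ in $P$ equals $1$, so $P=c\,r^a\,G$. A degree count then yields $2d_1-i_0=2a+d_2$; under the generic assumption that the center of inversion lies on neither surface ($i_0=0$), this equation together with $d_1,d_2\le 4$ forces $d_1=d_2=4$ and $a=2$, so $r^2\mid P$.

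Next I would extract the Darboux cyclide form from $r^2\mid P$. Because the homogeneous summands of $P$ have pairwise distinct degrees, $r^2\mid P$ is equivalent to the two divisibilities $r\mid F_{d_1-1}$ and $r^2\mid F_{d_1}$. For $d_1=4$ the degree constraints $\deg F_3=3$ and $\deg F_4=4$ then force $F_3=rL$ for a linear form $L$ and $F_4=a_0r^2$ for a constant $a_0$, which is exactly the Darboux cyclide form $F=a_0r^2+rL+Q$ with $\deg Q\le 2$. Exchanging the roles of $F$ and $G$ (legitimate because the inversion is an involution) yields the same conclusion for $G$.

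The main obstacle is the degenerate case when $V(F)$ passes through the center of the inversion or through its point at infinity: then $i_0\ge 1$, the degree identity no longer forces $a\ge 2$, and the divisibility $r^2\mid P$ may fail outright. I would address this by first conjugating the pair of surfaces by a preliminary M\"obius transformation into general position with respect to the inversion sphere (the degree bound and the Darboux cyclide class both being M\"obius-invariant), reducing to the case $i_0=0$ handled above; the residual configurations that cannot be so normalized form a short list, controlled by the inequality $2a+d_2=2d_1-i_0\le 8$ and the admissible divisibility patterns of $r^a$ through the components $F_i$, and can be treated one by one.
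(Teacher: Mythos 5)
Your skeleton is essentially the paper's own argument (clear denominators under the inversion, use that the homogeneous pieces $F_i\,r^{d_1-i}$ have pairwise distinct degrees, and read the cyclide form off divisibility by $r$), but the pivotal deduction in the middle is false: the equation $2d_1=2a+d_2$ with $d_1,d_2\le 4$ does \emph{not} force $d_1=d_2=4$ and $a=2$. It also admits $(d_1,a,d_2)=(4,3,2),\,(3,1,4),\,(3,2,2),\,(2,1,2),\,(2,0,4),\,(1,0,2)$, and the lemma is not vacuous below degree $4$: a cubic is a Darboux cyclide only if $r\mid F_3$. These cases are all tractable --- $(4,3,2)$ and $(3,2,2)$ cannot occur because $r^a\mid P$ would force $F_{d_1}=0$ for degree reasons, $(3,1,4)$ gives $r\mid F_3$ directly, and degree $\le 2$ is automatic --- but none of this analysis is in your write-up; as stated, the ``forces'' step is simply wrong. (The paper sidesteps this by writing both equations with components up to degree $4$, allowing some to vanish, and comparing the two top homogeneous terms to get $k=2$, $G^2\mid B_4$, $G\mid B_3$ in its main case.) Your multiplicity-one claim is also only gestured at; the clean way is to apply the inversion twice and use irreducibility of $F$, or, as in the paper, to take defining equations of minimal degree.

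The second genuine gap is the degenerate case $i_0\ge 1$. To preserve the hypothesis you must conjugate by a M\"obius transformation commuting with the unit-sphere inversion, i.e.\ one preserving $S^2$; such a map can indeed move the center off both surfaces, but your justification that ``the degree bound is M\"obius-invariant'' is false: only the class of Darboux cyclides is M\"obius-invariant, whereas the image of a degree-$4$ surface under a sphere-preserving M\"obius map has degree up to $8$ in general, so after your normalization the hypothesis $d_1,d_2\le 4$ may be destroyed and the generic-case computation no longer applies. Consequently the configurations with the center of inversion on a surface have to be treated directly by the same homogeneous-component expansion with $F_0=0$ (resp.\ with the top form divisible by $r$ but not $r^2$), and this is precisely where the divisibility pattern changes --- for instance, a quartic with a double point at the center whose top form is $r$ times a quadratic form inverts to another quartic of the same shape, so the conclusion in this regime is genuinely delicate rather than a routine residue of the generic argument. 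Deferring it to an unspecified ``short list'' therefore leaves a real gap, not bookkeeping.
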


\begin{proof} Denote $G(x,y,z):=x^2+y^2+z^2$. Let $A(x,y,z)=0$ and $B(x,y,z)=0$ be equations of our surfaces of minimal degrees. Since the surfaces are symmetric with respect to the unit sphere it follows that $B(x,y,z)=c G(x,y,z)^k A(\frac{x}{G(x,y,z)},\frac{y}{G(x,y,z)},\frac{z}{G(x,y,z)})$ for some $0\le k\le 4$ and $c\ne 0$.

Expand $A=A_4+A_3+A_2+A_1+A_0$ and $B=B_4+B_3+B_2+B_1+B_0$, where $A_i$ and $B_i$ are homogeneous of degree $i$.
We get $B=cG^k(A_0+G^{-1}A_1+\dots +G^{-4}A_4)$.
Hence $G^{4-k}B=c(G^4A_0+G^{3}A_1+\dots +A_4)$.

Assume that $A_0,B_4\ne 0$ (otherwise the proof is analogous).
The degree of the left- and right-hand sides equal $12-2k$ and $8$ respectively. Thus $k=2$. Comparing the highest-degree terms  we get $G^2B_4=cG^4A_0$, hence $B_4$ is divisible by $G^2$. Comparing the degree $7$ terms we get $G^2B_3=cG^3A_1$, hence $B_3$ is divisible by $G$. Therefore $B(x,y,z)=0$ is a Darboux cyclide. Analogously $A(x,y,z)=0$ is a Darboux cyclide.
\end{proof}


\begin{lem}\label{prop11}
If the surface  $\Phi(u,v)=A(u,v)B(u,v)^{-1}$, where $A,B\in\mathbb{H}_{11}$, is contained in $\mathbb{R}^3$ (respectively, in $S^3$) then it is a subset of a Darboux cyclide (respectively, an intersection of $S^3$ with another $3$-dimensional quadric).
\end{lem}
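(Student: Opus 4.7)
My plan is to handle the case $\Phi\subset\mathrm{Im}\mathbb{H}$ first, using Lemma~\ref{l-deg4} combined with inversion through the unit sphere, and then to reduce the $S^3$ case to it via stereographic projection. I assume throughout that $\Phi$ is two-dimensional; the fully degenerate case is not needed for the applications.

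For the $\mathbb{R}^3$ case, Lemma~\ref{l-deg4} supplies an irreducible algebraic surface $S_1\subset\mathrm{Im}\mathbb{H}$ of degree at most $4$ containing $\Phi=AB^{-1}$. The inversion $q\mapsto -q^{-1}$ through the unit sphere of $\mathrm{Im}\mathbb{H}$ sends the parametric surface $AB^{-1}$ to $-BA^{-1}$, which is again of the form $A'B'^{-1}$ with $A',B'\in\mathbb{H}_{11}$ and contained in $\mathrm{Im}\mathbb{H}$; Lemma~\ref{l-deg4} applied to this inverted parametrization then produces an irreducible algebraic surface $S_2$ of degree at most $4$ containing it. Because $q\mapsto -q^{-1}$ is a birational involution of $\mathrm{Im}\mathbb{H}$, the Zariski closure of its image of $S_1$ is irreducible and two-dimensional; it contains $-BA^{-1}$, which is Zariski dense in the irreducible two-dimensional $S_2$, so the two closures must coincide. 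Hence $S_1$ and $S_2$ are symmetric with respect to the unit sphere, and Lemma~\ref{l-cyclide} implies that $S_1$ is a Darboux cyclide containing $\Phi$, finishing the $\mathbb{R}^3$ case. The main technical step here is this identification of Zariski closures, which rests on the two-dimensionality of $\Phi$ and the birationality of the inversion.

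For the $S^3$ case, the stereographic projection $\sigma(q)=(q+1)(q-1)^{-1}$, already used in the proof of Lemma~\ref{l-3fold-R3}, is a M\"obius involution exchanging $S^3$ and $\mathrm{Im}\mathbb{H}$. A direct computation yields $\sigma(AB^{-1})=(A+B)(A-B)^{-1}$, again of the form $A'B'^{-1}$ with $A',B'\in\mathbb{H}_{11}$ and contained in $\mathrm{Im}\mathbb{H}$. The $\mathbb{R}^3$ case applied to $\sigma(\Phi)$ yields a Darboux cyclide $C\supseteq\sigma(\Phi)$; by the equivalent characterization of Darboux cyclides as stereographic images of intersections of $S^3$ with $3$-dimensional quadrics, recalled in the introduction, $\sigma^{-1}(C)$ is an intersection of $S^3$ with a $3$-dimensional quadric, and it contains $\Phi$.
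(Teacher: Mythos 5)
Your proof is correct and follows essentially the same route as the paper: applying Lemma~\ref{l-deg4} to $AB^{-1}$ and to its unit-sphere inverse $-BA^{-1}$, invoking Lemma~\ref{l-cyclide}, and reducing the $S^3$ case via the stereographic projection $q\mapsto(q+1)(q-1)^{-1}$, which sends $AB^{-1}$ to $(A+B)(A-B)^{-1}$, together with the characterization of Darboux cyclides from \cite[Section~2.2]{PSS11}. Your Zariski-closure argument merely spells out the symmetry step that the paper leaves implicit.
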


\begin{proof}
For a surface in $ \R^3$ the result follows from Lemmas~\ref{l-deg4} and~\ref{l-cyclide} applied to the surfaces $AB^{-1}$ and $-BA^{-1}$.
Now consider a surface in $S^3$. Project it stereographically to $\mathbb{R}^3$.
The resulting surface has the form $(A+B)(A-B)^{-1}$. By the lemma for $\mathbb{R}^3$ it is a Darboux cyclide.  By \cite[Section~2.2]{PSS11} the initial surface is an intersection of $S^3$ with another quadric.
\end{proof}

Let us proceed to the general case of Theorem~\ref{prop21}. We start with a folklore lemma.

\begin{lemma}\label{l-axial} The curve $\gamma(u)=(au+b)(cu+d)^{-1}\subset\mathbb{H}$, where $a,b,c,d\in\mathbb{H}$, $c\ne 0$, $b-ac^{-1}d\ne 0$, are fixed and $u\in\mathbb{R}$ runs, is a circle (possibly without one point). If $\gamma(u)$ is contained in $\im\H$ then the plane of the circle is orthogonal to the vector $\im\, (dc^{-1})$.
\end{lemma}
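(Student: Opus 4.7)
My plan is to reduce $\gamma$ to a normal form in which both claims become transparent. Since $u$ is real and thus commutes with every quaternion, one can factor $cu+d=(u+\beta)c$ with $\beta:=dc^{-1}$, whence $(cu+d)^{-1}=c^{-1}(u+\beta)^{-1}$. Therefore
\[
\gamma(u)=(au+b)c^{-1}(u+\beta)^{-1}=(a'u+b')(u+\beta)^{-1},
\qquad a':=ac^{-1},\; b':=bc^{-1}.
\]
Performing the left polynomial division $a'u+b'=a'(u+\beta)+Q$ with $Q:=b'-a'\beta=(b-ac^{-1}d)c^{-1}\ne 0$ (by the hypothesis) puts $\gamma$ in the form
\[
\gamma(u)=a'+Q\,(u+\beta)^{-1}.
\]

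The first claim, that $\gamma$ is a circle, follows from this normal form. Writing $\beta=s+\tau$ with $s:=\re\beta$ and $\tau:=\im\beta=\im(dc^{-1})$, a direct calculation gives
\[
(u+\beta)^{-1}=\frac{(u+s)-\tau}{(u+s)^2+|\tau|^2},
\]
which lies in the real $2$-plane $\mathrm{span}_{\R}(1,\tau)$. When $\tau\ne 0$, passing to orthonormal coordinates $(x,y)$ on this plane one checks the equation $x^2+(y+1/(2|\tau|))^2=1/(4|\tau|^2)$, i.e.\ a genuine circle through the origin; when $\tau=0$ the trace is the entire real line, which counts as a (degenerate) circle in our convention. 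Left multiplication by the nonzero quaternion $Q$ is a similarity of $\H\cong\R^4$ (it multiplies all distances by $|Q|$) and hence sends circles and lines to circles and lines; translation by $a'$ does likewise. The possibly missing point on $\gamma$ corresponds to the pole $u=-\beta$, which is real only when $\tau=0$.

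For the second claim, suppose $\gamma(\R)\subset\im\H$. Taking $u\to\infty$ gives $a'=\lim_{u\to\infty}\gamma(u)\in\im\H$, so $Q(u+\beta)^{-1}\in\im\H$ for every real $u$. Expanding,
\[
Q(u+\beta)^{-1}=\frac{(u+s)\,Q-Q\tau}{(u+s)^2+|\tau|^2},
\]
the vanishing of the real part for every $u$ forces both $\re Q=0$ and $\re(Q\tau)=0$. Hence $Q\in\im\H$ and, since $\re(Q\tau)=-\langle Q,\tau\rangle$ for imaginary $Q$ and $\tau$, also $Q\perp\tau$; consequently $Q\tau=Q\times\tau$. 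The plane of the circle $\gamma$ is the translate by $a'$ of $Q\cdot\mathrm{span}_{\R}(1,\tau)=\mathrm{span}_{\R}(Q,\,Q\times\tau)$, and both basis vectors are perpendicular to $\tau$: the first by construction, the second by a standard property of the cross product. Therefore the plane is orthogonal to $\tau=\im(dc^{-1})$, which completes the argument.

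There is no serious obstacle beyond bookkeeping. The only real choice is to factor $cu+d=(u+\beta)c$ on the right rather than $c(u+c^{-1}d)$ on the left, so that the quantity $\beta=dc^{-1}$ appearing in the conclusion shows up naturally, together with the mild need to treat the degenerate case $\tau=0$, in which the ``circle'' is a straight line (for which the orthogonality statement is vacuous).
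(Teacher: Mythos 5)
Your proof is correct and follows essentially the same route as the paper: both reduce $\gamma$ to the normal form $ac^{-1}+(b-ac^{-1}d)c^{-1}(u+dc^{-1})^{-1}$, deduce the circle property from inversion of a line (you by explicit coordinates, the paper by citing the M\"obius decomposition), and then read off from the vanishing real part that $Q\in\im\H$, $Q\perp\im(dc^{-1})$, so the plane $\mathrm{span}(Q,\,Q\times\im(dc^{-1}))$ is orthogonal to $\im(dc^{-1})$. The only slip is the parenthetical claim that the missing point comes from the pole $u=-\beta$: for $\im(dc^{-1})\ne0$ the omitted point is the limit value $ac^{-1}$ at $u=\pm\infty$, not a pole; this does not affect the argument.
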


\begin{proof} We have
$\gamma(u)=ac^{-1}+(b-ac^{-1}d)(cu+d)^{-1}=:f+g(u+h)^{-1}$ for some $f,g,h\in\mathbb{H}$, where $h=dc^{-1}$.
This is a composition of translations, rotations, and an inversion applied to the line $\mathbb{R}\subset \mathbb{H}$, once $c\ne 0$ and $b-ac^{-1}d\ne 0$. Thus $\gamma(u)$ is a circle (possibly without one point).

Now assume that $\gamma(u)\subset\im\H$. Then $0=\re\gamma(u)|u+h|^2=\re f|u+h|^2+\re g u+\re (g\bar h)$. Since the coefficients of the polynomial in the right-hand side vanish it follows that
$f,g\in\im\H$ and
$0=\mathrm{Re}(g\bar h)=\mathrm{Re}(g\mathrm{Re}h+g\cdot\mathrm{Im}h-g\times\mathrm{Im}h)=g\cdot\mathrm{Im}h$, hence
$g$ is orthogonal to $\im h$. Assume further that $\im h\ne 0$, otherwise $\gamma(u)$ is a line and there is nothing to prove. Then the circle $g(u+h)^{-1}=(gu+g\mathrm{Re}h-g\times\mathrm{Im}h)|u+h|^{-2}$ is contained in the plane spaned by $g$ and $g\times \im h$. So $\gamma(u)=f+g(u+h)^{-1}$ is contained in a plane orthogonal to $\im h$ as well.
\end{proof}

A similar result holds for the curve $\gamma(u)=(cu+d)^{-1}(au+b)$, where $b-dc^{-1}a\ne0$.



\begin{proof}[Proof of Theorem~\ref{prop21} for $\Phi\subset\mathbb{R}^3$] 
If $A=\mathrm{const}$ then 
by Lemma~\ref{prop11} $\Phi=(A^{-1}B)C^{-1}$ is a subset of a Darboux cyclide. 
Analogously, if $C=\mathrm{const}$ then
$\Phi=-\overline{\Phi}=(-\overline{C}^{-1}\overline{B})(\overline{A})^{-1}$ 
is a Darboux cyclide.

Assume further that $A,C\ne\mathrm{const}$. Since $AC\in\mathbb{H}_{11}$ it follows that either $A\in\mathbb{H}_{01}$ and $C\in\mathbb{H}_{10}$ or vice versa. Assume the former without loss of generality. 

By left division of both $A$ and $B$ by the leading coefficient of $A$ we may achieve $A(v)=v+a$ for some $a\in\mathbb{H}$. Analogously, assume that $C(u)=u+c$ for some $c\in\mathbb{H}$. Performing an appropriate linear change of variables $u$ and $v$ we may achieve $a,c\in\im\H$.
If $a=0$ then $\Phi(u,v)=A(v)^{-1}B(u,v)C(u)^{-1}=D(u,1/v)C(u)^{-1}$ for appropriate $D\in\mathbb{H}_{11}$, hence $\Phi$ is a Darboux cyclide by the previous paragraph. The same holds for $c=0$.

Assume further that $a,c\ne 0$. Then we have
\begin{align*}
\Phi(u,v)-\Phi(u,0)-\Phi(0,v)+\Phi(0,0)&=
A^{-1}(B-Aa^{-1}B(u,0)-B(0,v)c^{-1}C-A\Phi(0,0)C)C^{-1}\\
&=(v+a)^{-1}buv(u+c)^{-1}
\end{align*}
for some $b\in\mathbb{H}$ because the left-hand sides vanishes identically for $u=0$ or $v=0$. If $b=0$ then $\Phi(u,v)=\Phi(u,0)+\Phi(0,v)-\Phi(0,0)$ is a subset of a Euclidean translational surface because by Lemma~\ref{l-axial} the curves $\Phi(u,0)$ and $\Phi(0,v)-\Phi(0,0)$ are circles (not degenerating to points because $\Phi$ is a nondegenerate surface).

Assume further that $b\ne 0$. By the above $(v+a)^{-1}buv(u+c)^{-1}\subset\im\H$ for each $u,v\in\mathbb{R}$. Thus $\mathrm{Re}(v+\bar a)b(u+\bar c)=0$, hence $b\in \im\H$, $b\perp a$, $b\perp c$, and $a\times b\perp c$. Since $a,b,c\ne 0$ this implies that $a\parallel c$.
By Lemma~\ref{l-axial} the curves $u=\mathrm{const}$ and $v=\mathrm{const}$ are circles (or points) whose planes are orthogonal to the vector $a\parallel c$. 
Thus all these circles and hence the surface $\Phi(u,v)$ are contained in one plane.
\end{proof}

\begin{proof}[Proof of Theorem~\ref{prop21} for $\Phi \subset S^3$]
As in the previous proof, we may assume that $A\in\mathbb{H}_{01}$, $C\in\mathbb{H}_{10}$ and $A,C\ne\mathrm{const}$. Since $\Phi \subset S^3$ it follows that
$|A^{-1}BC^{-1}| = 1$ and $|B(u,v)|^2 = |A(v)|^2|C(u)|^2$.
By Splitting Lemma~\ref{l-splitting-basic} there exist $D(v)\in\mathbb{H}_{01}$ and $E(u)\in\mathbb{H}_{10}$
such that  $B(u,v)$ splits:
$B(u,v) = D(v) E(u)$ or $B(u,v) = E(u) D(v)$. Since $|A(v)|\cdot |C(u)|=|B(u,v)|=|D(v)|\cdot |E(u)|$,  without loss of generality we may assume that 
 $|D| = |A|$, $|E| = |C|$.

In the case when $B=DE$ we have $\Phi = A^{-1}DEC^{-1}=\bar A\,|A|^{-2}\,|D|^2\,\bar D^{-1}EC^{-1}=(\bar A \,\bar D^{-1})(EC^{-1})$
is a product of two circles $\bar A(v)\bar D(v)^{-1}$ and $E(u)C(u)^{-1}$ in $S^3$ because $|D| = |A|$, $|E| = |C|$.
Thus $\Phi$ is a subset of a Clifford translational surface.

In the case when $B=ED$ we have
$
\Phi = A^{-1}EDC^{-1} = \bar AE\bar D^{-1}C^{-1} = (\bar AE)(C\bar D)^{-1}
$
because $|D|=|A|$. Since $\bar AE, C\bar D \in \H_{11}$ it follows by Lemma~\ref{prop11} that $\Phi$ is contained in the intersection of $S^3$ with another $3$-dimensional quadric.
\end{proof}

\begin{proof}[Proof of Corollary~\ref{cor-3D}]
Assume that a surface $\Psi\subset\mathbb{R}^3$ is a M\"obius transformation of surface~\eqref{eq-4DBconj}.
Since a M\"obius transformation takes a hyperplane to either a hyperplane or a $3$-dimensional sphere it follows that the latter surface 
is contained either in a hyperplane or a $3$-dimensional sphere.

Perform a M\"obius transformation $q\mapsto aqc+b$  (
a similarity), where $a,b,c\in\H$, taking the obtained hyperplane (respectively, the $3$-dimensional sphere) to $\im\H$ (respectively, to $S^3$). It takes the surface $A^{-1}BC^{-1}$ to the surface  $(Aa^{-1})^{-1}(B+Aa^{-1}bc^{-1}C) (c^{-1}C)^{-1}$
again of form~\eqref{eq-4DBconj}.

By Theorem~\ref{prop21} the resulting surface 
either a Darboux cyclide, or Euclidean translational surface, or intersection of $S^3$ with another quadric, or Clifford translational surface. In the latter two cases perform the inversion with the center at the point $1\in\mathbb{H}$ and the radius $\sqrt{2}$ 
projecting the surface stereographically from $S^3$ to $\im\H$.
This gives either a Darboux cyclide (by \cite[Section~2.2]{PSS11}) or a 
Clifford translational surface.
In all cases the resulting surface in $\im\H$ is a M\"obius transformation of the initial surface $\Psi\subset\mathbb{R}^3$. 
\end{proof}

We conclude the section by an open problem: find a short proof that Euclidean and Clifford translational surfaces are not M\"obius transformations of each other; this is \cite[Theorem 2b]{Lubbes-15}.


\section{Parametrization results}\label{sec:proofs2}

In this section we prove the results on parametrization of surfaces containing two conics or circles through each point: Theorem~\ref{schicho} and Corollary~\ref{haupt}. The proof uses well-known methods and goes along the lines of \cite{schicho:2001}.

We use the following notions.
Let $P^n$ be the $n$-dimensional complex projective space with the homogeneous coordinates $x_0:\dots:x_n$. Throughout we use the standard topology in $P^n$ (not the Zariski one). A \emph{analytic surface} in $P^n$ is the image of an injective complex analytic map from a domain in $\mathbb{C}^2$  into $P^n$ with nondegenerate differential at each point.
An \emph{algebraic subset} $X\subset P^n$ is the solution set of some system of algebraic equations. Algebraic subsets of dimension $1$ and $2$ are called \emph{projective algebraic surfaces} and \emph{algebraic curves}, respectively. Recall that the set of all conics in $P^n$ including the ones degenerating into lines and pairs of lines is naturally identified with an algebraic subset of $P^N$ for some large $N$ (depending on $n$). The latter subset is called \emph{the variety of all conics in $P^n$}. A conic \emph{analytically depending} on a point is a complex analytic map of an analytic surface in $P^n$ into the variety of all conics in $P^n$. An \emph{analytic family} of conics is a complex analytic map $t\mapsto\alpha_t$ of a domain in $\mathbb{C}$ into the variety of all conics in $P^n$. 
If no confusion arises, the image of this map is also called a \emph{family} of conics.

Let us prove several lemmas required for the proof of Theorem~\ref{schicho}. These lemmas are independent in the sense that the proof of each one uses the statements but not the proofs of the other ones. In what follows $\Phi$ is a surface 
satisfying the assumptions of Theorem~\ref{schicho} unless otherwise indicated. Denote by $\alpha_P$ and $\beta_P$ the two conics drawn through a point $P\in\Phi$.

\begin{lem} \label{l-cover} There are
two analytic families of conics $\alpha_t$, $\beta_s$, 
and a domain $\Omega\subset P^n$ such that $\bigcup_{t}\alpha_t\cap\Omega=
\bigcup_{s}\beta_s\cap\Omega=\Phi\cap\Omega\ne\emptyset$, each pair $\alpha_t$, $\beta_s$ intersects transversely at a unique point~$P(s,t)$, and
$\alpha_{P(s,t)}=\alpha_t$, $\beta_{P(s,t)}=\beta_s$.
\end{lem}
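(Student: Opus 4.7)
The plan is to build the two families $\alpha_t,\beta_s$ locally around a sufficiently generic point $P_0\in\Phi$, namely a point of the dense subset where only finitely many conics of $\Phi$ pass. I will (i) prove a foliation property saying that $\alpha_P$ depends only on the $\alpha$-leaf through $P$; (ii) use transversality and the implicit function theorem to produce the parametrization; (iii) read off all four required compatibility, covering, and uniqueness statements.

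Step (i) is the crux. Consider the analytic map $P\mapsto\alpha_P$ from a neighborhood of $P_0$ in $\Phi$ into the variety of all conics in $P^n$ (its single-valuedness near $P_0$ follows from the transversality of $\alpha_{P_0},\beta_{P_0}$ and analytic continuation). If this map had rank $2$ at some point, its image would be an analytic $2$-parameter family of conics, each conic contained in $\Phi$; but then through a generic $Q\in\Phi$ the set $\{P:Q\in\alpha_P\}$ would be analytic of dimension $\ge 1$ (intersect the $2$-parameter family of $1$-dimensional conics with the divisor ``pass through $Q$''), producing infinitely many conics of the family through $Q$ and contradicting the finiteness hypothesis. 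Hence the rank of $P\mapsto\alpha_P$ is at most $1$ on a dense open set, and its fibres are $1$-dimensional analytic curves. The fibre through $P$ sits inside $\alpha_P$ (since every $Q$ in the fibre satisfies $Q\in\alpha_Q=\alpha_P$), so by irreducibility of $\alpha_P$ the fibre is an open subset of $\alpha_P$. The same reasoning applies to $P\mapsto\beta_P$.

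For step (ii), pick local analytic parameters $s$ on $\alpha_{P_0}$ and $t$ on $\beta_{P_0}$ near $P_0$, write $P_\alpha(s)\in\alpha_{P_0}$, $P_\beta(t)\in\beta_{P_0}$, and set $\alpha_t:=\alpha_{P_\beta(t)}$, $\beta_s:=\beta_{P_\alpha(s)}$; these are analytic families of conics by the analyticity of $P\mapsto\alpha_P,\beta_P$. Since $\alpha_{P_0}$ and $\beta_{P_0}$ are transversal at $P_0$ and meet there in the unique point $P_0$, the implicit function theorem guarantees that for $(s,t)$ in a sufficiently small polydisc the conics $\alpha_t$ and $\beta_s$ meet transversely in a unique point $P(s,t)$ near $P_0$, depending analytically on $(s,t)$. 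Shrinking, take $\Omega\subset P^n$ to be a neighborhood of $P_0$ on which $(s,t)\mapsto P(s,t)$ is a biholomorphism onto $\Phi\cap\Omega$.

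Step (iii) is then a matter of bookkeeping. Covering $\Phi\cap\Omega=\bigcup_t\alpha_t\cap\Omega=\bigcup_s\beta_s\cap\Omega$ follows at once from the biholomorphism and the fact that each $\alpha_t,\beta_s$ passes through $P(s,t)$. For the compatibility $\alpha_{P(s,t)}=\alpha_t$ invoke step (i): $P(s,t)\in\alpha_t\cap\Phi$, so the $\alpha$-fibre through $P(s,t)$ is an open subset of $\alpha_t$, hence $\alpha_{P(s,t)}$ and $\alpha_t$ agree on an open subset of $\alpha_t$ and therefore coincide as conics; and analogously for $\beta$. The main obstacle is the rank argument of step (i): one must carefully combine the analytic family structure of $P\mapsto\alpha_P$ with the finiteness hypothesis, and verify that the rank drop forced by dimension counting indeed yields the clean foliation statement used in step (iii) (rather than something weaker like a dense-open foliation that could fail exactly on the set where the surface is parametrized).
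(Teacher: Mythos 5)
Your proposal is correct and follows essentially the same route as the paper: you build the families by drawing the second conic through the points of $\alpha_{P_0}$ and $\beta_{P_0}$, obtain the unique transversal intersection $P(s,t)$ and the covering of $\Phi\cap\Omega$ by continuity, and derive $\alpha_{P(s,t)}=\alpha_t$ from the finiteness hypothesis by a dimension count -- the paper states your step (i) contrapositively (otherwise the image of $P\mapsto\alpha_P$ would be $2$-dimensional, giving infinitely many conics through each point of an open subset of $\Phi$). The obstacle you flag closes by lower semicontinuity of rank (the rank-$2$ locus is open, so rank $\le 1$ on a dense set forces rank $\le 1$ everywhere, whence every fibre of $P\mapsto\alpha_P$ is positive-dimensional), which is no more than the paper itself leaves implicit.
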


\begin{proof}
Take a point $P_0\in\Phi$. Draw the two conics $\alpha_0:=\alpha_{P_0}$ and $\beta_0:=\beta_{P_0}$ in the surface through the point. Through each point $P\in\alpha_0\cap\Phi$ draw another conic $\beta_{P}$ in the surface. We get an analytic family  of conics $\beta_s$. Analogously we get an analytic family  of conics $\alpha_t$.

By the assumptions of Theorem~\ref{schicho} the conics $\alpha_0$ and $\beta_0$  intersect transversely at a unique point. By continuity there is $\epsilon>0$ such that for $|s|,|t|<\epsilon$ the conics $\alpha_t$ and $\beta_s$ intersect transversely at a unique point $P(s,t)$, and $P(s,t)\in\Phi$. Take a sufficiently small neighborhood $\Omega$ of the point $\alpha_0\cap\beta_0$ in $P^n$. Then $\bigcup_{t}\alpha_t\cap\Omega=
\bigcup_{s}\beta_s\cap\Omega=\Phi\cap\Omega\ne\emptyset$.

It remains to show that $\alpha_{P(s,t)}=\alpha_t$ and $\beta_{P(s,t)}=\beta_s$. Indeed, otherwise the image of the analytic map $P\mapsto \alpha_P$ or $P\mapsto \beta_P$ in the variety of all conics in $P^n$ is $2$-dimensional. Then there are infinitely many conics $\alpha$ through each point in an open subset of $\Phi$ such that  $\alpha\cap\Omega\subset\Phi$. This contradicts to one of the assumptions of Theorem~\ref{schicho}. Thus the families $\alpha_t$ and $\beta_s$
are the required.
\end{proof}


\begin{lem} \label{l-algebraic} The surface $\Phi$ is contained in an irreducible algebraic surface $\bar\Phi$. The family of conics $\alpha_t$ is contained in an irreducible algebraic curve in the variety of all conics.
\end{lem}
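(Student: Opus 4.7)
The plan is to produce $\bar\Phi$ as the union of all conics in the Zariski closure of the analytic family $\{\alpha_t\}$ inside the variety of all conics in $P^n$. By Lemma~\ref{l-cover}, the assignment $t\mapsto[\alpha_t]$ gives an injective analytic map from a disk $D\subset\mathbb{C}$ into the algebraic variety $\mathcal{C}$ of all conics in $P^n$; its image $\Gamma\subset\mathcal{C}$ is a connected one-dimensional analytic arc. Let $\mathcal{A}$ be the Zariski closure of $\Gamma$ in $\mathcal{C}$. Connectedness makes $\Gamma$ irreducible as an analytic set, so $\mathcal{A}$ is an irreducible algebraic subvariety of $\mathcal{C}$.

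The crucial step is showing $\dim\mathcal{A}=1$. If $\dim\mathcal{A}\geq 2$, then for a general point $P\in\Phi\cap\Omega$ the fiber $\mathcal{A}_P:=\{[\alpha]\in\mathcal{A}:P\in\alpha\}$ has positive dimension, so an algebraic one-parameter subfamily of $\mathcal{A}$ passes through $P$. Using that $\mathcal{A}$ is the Zariski closure of the analytic arc $\Gamma$ and that conics in $\Gamma$ have their restriction to $\Omega$ contained in $\Phi$, one approximates each such extra conic by $\alpha_t$ for nearby $t$ and passes to a limit to obtain infinitely many distinct conics through $P$ whose intersection with $\Omega$ lies in $\Phi$. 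This contradicts the finiteness assumption of Theorem~\ref{schicho}, so $\mathcal{A}$ is an irreducible algebraic curve containing the family $\{\alpha_t\}$, which is the second assertion.

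To construct $\bar\Phi$, introduce the universal conic $\mathcal{U}(\mathcal{A}):=\{([\alpha],P)\in\mathcal{A}\times P^n:P\in\alpha\}$ with projections $\pi_1,\pi_2$. It is projective and irreducible as a conic bundle over the irreducible curve $\mathcal{A}$ with generically irreducible fibers, of dimension $\dim\mathcal{A}+1=2$. Its image $\bar\Phi:=\pi_2(\mathcal{U}(\mathcal{A}))$ is therefore an irreducible algebraic subset of $P^n$ of dimension at most $2$ and at least $2$, since it contains the two-dimensional analytic set $\bigcup_{t\in D}\alpha_t=\Phi\cap\Omega$. Hence $\bar\Phi$ is an irreducible algebraic surface. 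The containment $\Phi\subset\bar\Phi$ follows from the identity principle: the preimage of the closed algebraic set $\bar\Phi$ under the analytic parametrization of $\Phi$ is a closed analytic subset of the connected parameter domain containing a nonempty open subset, hence the whole domain.

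The main obstacle is the dimension bound $\dim\mathcal{A}=1$. Ruling out that the analytic arc $\Gamma$ is Zariski dense in a higher-dimensional family of conics requires care: one must combine approximation of a limiting conic by elements of $\Gamma$ with some openness of the condition ``restriction to $\Omega$ lies in $\Phi$'', and handle reducible or degenerate conics in $\mathcal{A}$ separately, probably by removing a proper closed subvariety of $\mathcal{A}$ where the conic bundle or the intersection pattern with $\Phi$ misbehaves.
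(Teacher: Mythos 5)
Your construction of $\bar\Phi$ from an irreducible one-dimensional family of conics, and the use of the identity principle to get $\Phi\subset\bar\Phi$, are fine; the genuine gap is in the step you yourself flag as the main obstacle, namely the bound $\dim\mathcal{A}=1$ for the Zariski closure $\mathcal{A}$ of the arc $\Gamma=\{[\alpha_t]\}$, and the argument you sketch for it does not work. Two things fail. First, the claim that $\dim\mathcal{A}\ge 2$ forces the fiber $\mathcal{A}_P=\{[\alpha]\in\mathcal{A}:P\in\alpha\}$ to be positive-dimensional for a general $P\in\Phi\cap\Omega$ is unjustified: a two-dimensional family of conics can sweep out a three-dimensional subset of $P^n$, in which case the conics of $\mathcal{A}\setminus\Gamma$ need not pass through points of $\Phi$ at all, and fibers over points of $\Phi\cap\Omega$ can be finite. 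Second, the ``approximate each such extra conic by $\alpha_t$ and pass to a limit'' step conflates Zariski closure with Euclidean closure: the Euclidean closure of the analytic arc $\Gamma$ is essentially the arc itself, while its Zariski closure can be much larger (an analytic arc is typically Zariski dense in a higher-dimensional variety, as for $t\mapsto(t,e^t)$), so a conic $[\alpha]\in\mathcal{A}$ is in general \emph{not} a limit of conics $\alpha_t$, and you have no way to conclude that $\alpha\cap\Omega\subset\Phi$. Without that containment you cannot invoke the finiteness assumption of Theorem~\ref{schicho}, and the dimension bound — which is exactly the content of the lemma — remains unproved.

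The paper closes precisely this hole by replacing the Zariski closure of $\{\alpha_t\}$ with an auxiliary algebraic set defined by incidence conditions: after shrinking $\Omega$ so that the conics $\beta_s$ have no common point there (each point of $\Omega$ then lies on at most countably many $\beta_s$), one considers the set $\gamma$ of \emph{all} conics meeting every $\beta_s$, $|s|<\epsilon$, transversely in a single point of $\Omega$. Each condition ``meets $\beta_s$'' is algebraic, and the intersection over all $s$ is still an algebraic subvariety, of which $\gamma$ is an open subset containing all $\alpha_t$. The point of this definition is that membership in $\gamma$ \emph{forces} containment in $\Phi$: such a conic meets $\bigcup_s\beta_s\cap\Omega=\Phi\cap\Omega$ in uncountably many points, hence $\alpha\cap\Omega\subset\Phi$ by analyticity. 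Only then does $\dim\gamma\ge 2$ contradict the finiteness hypothesis, giving $\dim\gamma=1$, after which one takes the irreducible component of its algebraic closure containing the $\alpha_t$ and the union of its conics, much as you do. If you want to salvage your route, you would need some substitute mechanism guaranteeing that the extra conics produced by a higher-dimensional $\mathcal{A}$ lie in $\Phi$ near $\Omega$; the incidence-with-all-$\beta_s$ trick is exactly such a mechanism, and your current text does not contain one.
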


\begin{proof} 
The conics $\beta_s$ given by Lemma~\ref{l-cover} have at most $4$ common points because they do not all coincide. Thus we may assume that they do not have common points inside the domain $\Omega$ (one can restrict the surface to a smaller domain, if necessary). 
Then by the analyticity each point of $\Omega$ belongs to at most countable number of conics $\beta_s$.
Take a sufficiently small $\epsilon>0$ such that $\alpha_t\cap\beta_s\subset\Omega$ for each $|s|,|t|<\epsilon$.

Consider the set $\gamma$ of all conics in $P^n$ intersecting each conic $\beta_s$, where $|s|<\epsilon$, transversely at a unique point, which belongs to the domain $\Omega$. Let us show that the set $\gamma$ is an open subset of an algebraic subvariety of the variety of all conics in $P^n$. Indeed, the set of all conics intersecting a fixed conic $\beta_s$ is clearly an algebraic subvariety. 
The set of all conics intersecting each conic $\beta_s$, where $|s|<\epsilon$, is the intersection of infinitely many such algebraic subvarieties, and hence also an algebraic subvariety. The set $\gamma$ is open in the latter subvariety because each conic sufficiently close to a conic lying in $\gamma$ can only intersect a conic $\beta_s$, where $|s|<\epsilon$, transversely at a unique point inside the domain $\Omega$ (or not intersect at all).

Let us prove that the dimension of the set $\gamma$ is $1$. The dimension is at least $1$ because $\gamma$ contains all the conics $\alpha_t$, where $|t|<\epsilon$. To estimate the dimension from above,
take an arbitrary conic $\alpha\in\gamma$. Since each point of $\Omega$ belongs to at most countable number of conics $\beta_s$ it follows that the conic $\alpha$ has uncountably many intersection points with $\bigcup_{s}\beta_s\cap\Omega=\Phi\cap\Omega$. Hence $\alpha\cap\Omega\subset\Phi\cap\Omega$, because $\Phi$ is analytic. If the dimension of $\gamma$ were at least $2$ then there would be infinitely many conics $\alpha$ through each point in an open subset of $\Phi$ such that $\alpha\cap\Omega\subset\Phi$. This would contradict to one of the assumptions of Theorem~\ref{schicho}. Thus the dimension of $\gamma$ is exactly $1$.

The  irreducible component of the algebraic closure of the set $\gamma$ containing all the conics $\alpha_t$ is the required algebraic curve in the variety of all conics. (Since the family $\alpha_t$ is analytic, it cannot ``jump'' from one irreducible component to another.) The union of all the conics of the irreducible component (including the ones degenerating to lines and pairs of lines) is the required irreducible algebraic surface $\bar\Phi$. The algebraic surface $\bar\Phi$ contains the analytic surface $\Phi$ because $\bar\Phi$ contains the open subset $\bigcup_{t}\alpha_t\cap\Omega=\Phi\cap\Omega$ of $\Phi$.
\end{proof}

\begin{rem} \label{rem-infinity} 
If we drop the assumption that the number of conic through certain points is finite in Theorem~\ref{schicho} then a similar argument shows that $\Phi$ is contained in an algebraic surface $\bar \Phi$ and $\alpha_t$ is contained in an algebraic subvariety $\gamma$ of the variety of all conics in $P^n$ such that $\bigcup_{\alpha\in\gamma}\alpha=\bar\Phi$.
\end{rem}



Let $X,Y\subset P^n$ be algebraic subsets.
A \emph{rational map} $X\dasharrow Y$ is a map of an open dense subset of $X$ into the set $Y$ given by polynomials in homogeneous coordinates of $P^n$. (Dashes in the notation remind that a rational map may not be defined everywhere in $X$.)
If the restriction of a rational map $f$ to certain open dense subsets of $X$ and $Y$ is bijective, and the inverse map is rational as well, then $f$ is called a \emph{birational map}. A rational map $X\dasharrow P^1$ is called a \emph{rational function}. 

A projective algebraic surface $\Psi$ is \emph{unirationally ruled} (or simply \emph{uniruled}), if for some algebraic curve $\gamma$ there is a rational map $\gamma\times P^1\dasharrow\Psi$ with dense image. A surface $\Psi$ is \emph{birationally ruled} (or simply \emph{ruled}), if there is a birational map $\gamma\times P^1\dasharrow\Psi$. A curve $\gamma$ is \emph{rational}, if there is
a birational map $P^1\dasharrow\gamma$.
A surface $\Psi$ is \emph{rational}, if there is a birational map $P^1\times P^1\dasharrow\Psi$.


\begin{lemma} \label{l-uniruled} The surface $\bar\Phi$ is unirationally ruled.
\end{lemma}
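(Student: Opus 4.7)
The plan is to exhibit a dominant rational map $\gamma \times P^1 \dasharrow \bar\Phi$, where $\gamma$ is the irreducible algebraic curve in the variety of conics provided by Lemma~\ref{l-algebraic}. By that lemma, $\bar\Phi$ coincides with the union of all conics parametrized by $\gamma$, so a natural candidate is the map sending a pair $(\alpha,u)$ to the point of the conic $\alpha$ having parameter $u$ in a suitable rational parametrization.

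To implement this I would form the incidence variety
\[
\mathcal{I} := \{(\alpha,P)\in\gamma\times P^n : P\in\alpha\},
\]
equipped with the two natural projections $\pi_1 : \mathcal{I}\to\gamma$ (a conic bundle whose fibre over $\alpha$ is $\alpha$ itself) and $\pi_2 : \mathcal{I}\to P^n$. By construction the image of $\pi_2$ is $\bigcup_{\alpha\in\gamma}\alpha$, which equals $\bar\Phi$ by Lemma~\ref{l-algebraic}. A standard upper semi-continuity argument then shows that $\mathcal{I}$ is irreducible of dimension two, because $\gamma$ is an irreducible curve and the generic fibre of $\pi_1$ is an irreducible conic (see below).

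Next I would produce a birational map $\gamma\times P^1\dasharrow\mathcal{I}$. The generic fibre of $\pi_1$ is a smooth conic over the function field $\mathbb{C}(\gamma)$, and $\mathbb{C}(\gamma)$ is a $C_1$ field by Tsen's theorem; hence this generic conic possesses a rational point. Equivalently, $\pi_1$ admits a rational section $\sigma:\gamma\dasharrow\mathcal{I}$. Performing the stereographic projection of each conic from the point $\sigma(\alpha)$ then yields a birational parametrization $\gamma\times P^1\dasharrow\mathcal{I}$ (lines in the plane of $\alpha$ through $\sigma(\alpha)$ are parametrized by $P^1$ and cut $\alpha$ in one further point, rationally in $\alpha$). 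Composing this with $\pi_2$ supplies the required dominant rational map to $\bar\Phi$, proving that $\bar\Phi$ is unirationally ruled.

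The one genuine subtlety is verifying that the generic conic in $\gamma$ is irreducible, so that Tsen's theorem applies to the generic fibre of $\pi_1$. This is where Lemma~\ref{l-cover} is used: inside the open set of $\gamma$ parametrizing the family $\alpha_t$, each conic meets every nearby $\beta_s$ transversely at a single point, a configuration which rules out generic degeneration into a pair of lines or a double line (a pair of lines would meet a fixed smooth conic $\beta_s$ in too many or too few points). Should all conics in $\gamma$ nevertheless degenerate, the surface $\bar\Phi$ would already be covered by a one-parameter family of lines and would be ruled in the strong sense, making the conclusion trivial. Apart from this irreducibility check, the argument is a direct application of Tsen's theorem to convert the one-parameter family of rational curves into an honest birational $P^1$-bundle over $\gamma$.
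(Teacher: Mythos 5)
Your proof is correct and takes essentially the same route as the paper: both form the incidence variety of pairs (conic, point) over the irreducible curve $\gamma$ of conics from Lemma~\ref{l-algebraic}, show that this conic fibration over $\gamma$ is birationally $\gamma\times P^1$, and compose with the projection onto $\bar\Phi$, whose image is dense by Lemma~\ref{l-cover}/Lemma~\ref{l-algebraic}. The only difference is the justification of the middle step: the paper cites the Noether--Enriques theorem \cite[Theorem~III.4]{Beauville}, while you obtain the rational section directly from Tsen's theorem and parametrize by projection from it (which is in effect the standard proof of Noether--Enriques for conic bundles), and you additionally flag the possible degeneration of the generic conic, a point the paper passes over silently.
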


\begin{proof} 
Let $\bar\gamma$ be the irreducible curve in the variety of conics given by Lemma~\ref{l-algebraic}.
Consider the algebraic set
$\bar\Psi := \{(P, \alpha)\in \bar\Phi\times\bar\gamma : P\in\alpha\}.
$
The second projection $\bar\Psi\to \bar\gamma$ is a rational map such that a generic fiber is a conic (and hence a rational curve). By the Noether--Enriques theorem \cite[Theorem~III.4]{Beauville}  $\bar\Psi$ is birationally ruled. In particular there is a rational map $\bar\gamma\times P^1\dasharrow \bar\Psi$ with dense image. Compose the map with the first projection $\bar\Psi\to\bar\Phi$, which is surjective because the surfaces $\bar\Psi$ and $\bar\Phi$ are compact and the image contains the open subset $\bigcup_t\alpha_t\cap\Omega=\Phi\cap\Omega$ by Lemma~\ref{l-cover}. We get a rational map $\bar\gamma\times P^1\dasharrow \bar\Phi$ with dense image, i.e., $\bar\Phi$ is unirationally ruled.
\end{proof}

The following folklore lemma is the most technical part of our proof.

\begin{lemma} \label{l-ruled} Each unirationally ruled surface is birationally ruled.
\end{lemma}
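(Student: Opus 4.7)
The plan is to apply the Enriques--Kodaira classification of complex projective surfaces. Specifically, I will show that $\Psi$ has Kodaira dimension $-\infty$ (all plurigenera vanish), and then invoke the classical theorem that a smooth projective complex surface with Kodaira dimension $-\infty$ is birational to $B\times P^1$ for some smooth projective curve $B$, which is exactly the assertion that $\Psi$ is birationally ruled.

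Since the conclusion is a birational invariant, I first replace $\Psi$ by a smooth projective model (Hironaka) and resolve the indeterminacy of the given dominant rational map $f:\gamma\times P^1\dasharrow\Psi$, obtaining a morphism $\tilde f:\tilde\Psi\to\Psi$ from a smooth projective surface $\tilde\Psi$ birational to $\gamma\times P^1$. I then compute the plurigenera $P_m(\Psi):=\dim H^0(\Psi,K_\Psi^{\otimes m})$. Because $\tilde f$ is generically finite in characteristic zero, the ramification formula $K_{\tilde\Psi}=\tilde f^*K_\Psi+R$ with $R$ an effective divisor gives an injection $\tilde f^*K_\Psi^{\otimes m}\hookrightarrow K_{\tilde\Psi}^{\otimes m}$, and pullback of global sections along a dominant morphism of smooth projective varieties is injective. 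Hence $P_m(\Psi)\leq P_m(\tilde\Psi)=P_m(\gamma\times P^1)$ by birational invariance of plurigenera. The restriction of $K_{\gamma\times P^1}^{\otimes m}$ to any ruling $\{c\}\times P^1$ is $\mathcal{O}_{P^1}(-2m)$, which has no nonzero sections for $m\geq 1$; since these rulings cover $\gamma\times P^1$, every pluricanonical form is identically zero, giving $P_m(\gamma\times P^1)=0$ and therefore $P_m(\Psi)=0$. The Enriques--Kodaira classification then yields the claim.

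The main obstacle is that this strategy invokes the deep Enriques--Kodaira classification as a black box. A more hands-on route is to work directly with the one-parameter family of rational curves $R_c:=\overline{f(\{c\}\times P^1)}\subset\Psi$: they cover $\Psi$, and through a generic point of $\Psi$ only finitely many of them pass, by a dimension count on the incidence variety $\{(P,c):P\in R_c\}\subset\Psi\times\gamma$. Replacing $\gamma$ by the image of its natural map into $\mathrm{Chow}(\Psi)$ to eliminate redundant parametrization and then applying the Noether--Enriques theorem (already used by the authors in Lemma~\ref{l-uniruled}) produces a generically finite surjection onto $\Psi$ from a birationally ruled surface. However, descending the $P^1$-fibration through such a cover of degree greater than one is nontrivial and ultimately requires the same classification input, which is why the Kodaira-dimension route is cleaner for a short proof of this folklore lemma.
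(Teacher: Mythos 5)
Your proof is correct and follows essentially the same route as the paper: resolve singularities and eliminate indeterminacy to get a dominant morphism from a surface birational to $\gamma\times P^1$, show all plurigenera of the target vanish by pulling back pluricanonical sections (the paper phrases this as a contradiction, you as the inequality $P_m(\Psi)\le P_m(\gamma\times P^1)=0$), and conclude by Enriques' criterion that Kodaira dimension $-\infty$ implies birationally ruled. The only cosmetic difference is that you verify $P_m(\gamma\times P^1)=0$ directly via the restriction to rulings, where the paper cites Beauville.
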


\begin{proof} 
Let $\bar\Phi$ be a unirationally ruled surface and $\gamma\times P^1\dasharrow\bar\Phi$ be a rational map with dense image.
By the Hironaka theorem 
(or by an earlier Zariski theorem sufficient in our situation) the surface $\bar\Phi$ has a \emph{desingularization} $d:\tilde\Phi\to\bar\Phi$, i.e., a proper birational map from a smooth projective algebraic surface $\tilde\Phi$ to the surface $\bar\Phi$.
Let $\bar\Phi\dasharrow\tilde\Phi$ be the inverse rational map of the desingularization.

Consider the rational map $\gamma\times P^1\dasharrow \bar\Phi\dasharrow \tilde\Phi$.
By the theorem on eliminating indeterminacy \cite[Theorem II.7]{Beauville} this rational map equals to a composition $\gamma\times P^1\dasharrow \tilde\Psi\to \tilde\Phi$, where  $\tilde\Psi$ is a smooth projective algebraic surface, the first map is birational, and the second map is rational and defined everywhere. Since $\gamma\times P^1\dasharrow \bar\Phi$ has dense image and the surfaces are compact it follows that $\gamma\times P^1\dasharrow \tilde\Psi$ has dense image and $\tilde\Psi\to \tilde\Phi$ is surjective.
In particular, $\tilde\Psi$ is birationally ruled.

By the Enriques theorem \cite[Theorem~VI.17 and Proposition~III.21]{Beauville}, 
a smooth projective algebraic surface is birationally ruled if and only if for each $k>0$ the $k$-th tensor power of the exterior square of the cotangent bundle has no sections except identical zero (in other terminology, \emph{the surface has Kodaira dimension} $-\infty$, or \emph{all plurigeni vanish}).
Assume, to the contrary, that $\tilde\Phi$ has such a section (\emph{pluricanonical section}). 
Then the pullback under the surjective rational map $\tilde\Psi\to\tilde\Phi$ 
is such a section for the birationally ruled surface $\tilde\Psi$, a contradiction. Thus $\tilde\Phi$, and hence $\bar\Phi$, is birationally ruled.
\end{proof}

\begin{lemma} \label{l-rational} The surface $\bar\Phi$ is rational.
\end{lemma}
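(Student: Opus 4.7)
The plan is to use both families of conics symmetrically, producing a birational equivalence $\bar\Phi\sim\bar\gamma\times\bar\delta$ with a product of two algebraic curves, and then to show that each factor is rational. (An alternative would be to invoke Lemma \ref{l-ruled} directly and argue that the base curve $\gamma_0$ of the birational ruling is rational by restricting the projection $\bar\Phi\dashrightarrow\gamma_0$ to a generic $\beta$-conic; the symmetric version below avoids the extra appeal.)

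First, I would repeat the construction of Lemma \ref{l-algebraic} with the roles of the two families interchanged, producing an irreducible algebraic curve $\bar\delta$ in the variety of all conics in $P^n$ containing the analytic family $\beta_s$ from Lemma \ref{l-cover} and with $\bar\Phi=\bigcup_{\beta\in\bar\delta}\beta$. Then I consider the rational map
$$F:\bar\Phi\dashrightarrow\bar\gamma\times\bar\delta,\qquad P\mapsto(\alpha_P,\beta_P),$$
defined on the analytic open set $\Phi\cap\Omega$ of Lemma \ref{l-cover} and extended by analyticity/algebraicity. By Lemma \ref{l-cover} its inverse on a neighborhood of $(\alpha_0,\beta_0)$ is the well-defined single-valued map $(\alpha,\beta)\mapsto\alpha\cap\beta$, so $F$ is birational and $\bar\Phi\sim\bar\gamma\times\bar\delta$.

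Finally, fix a generic $\alpha=\alpha_{t_0}\in\bar\gamma$ and restrict $F$ to $\alpha\cong P^1$. This yields a rational map $\alpha\dashrightarrow\{\alpha\}\times\bar\delta\cong\bar\delta$, $P\mapsto\beta_P$. Were this map constant, every point of $\alpha$ would lie on a single conic $\beta\in\bar\delta$, forcing $\alpha=\beta$ and contradicting the transversality of $\alpha_t,\beta_s$ in Lemma \ref{l-cover}. Since $\bar\delta$ is irreducible, non-constancy implies dominance, and a dominant rational map from $P^1$ to an irreducible curve exhibits $\bar\delta$ as rational. The symmetric argument gives $\bar\gamma$ rational, so $\bar\Phi\sim\bar\gamma\times\bar\delta\sim P^1\times P^1$ is rational.

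The main technical obstacle will be checking rigorously that the local analytic data of Lemma \ref{l-cover} patches together to give a genuine birational map $F$ between $\bar\Phi$ and $\bar\gamma\times\bar\delta$; this is handled by the same analytic-continuation and algebraic-closure machinery used in the proof of Lemma \ref{l-algebraic}, noting that the graph of $P\mapsto(\alpha_P,\beta_P)$ is an analytic subset of $\bar\Phi\times\bar\gamma\times\bar\delta$ lying inside an algebraic subvariety of the expected dimension.
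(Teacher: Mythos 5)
There is a genuine gap at the step ``so $F$ is birational'', and it is not the routine patching issue you flag at the end --- it is the mathematical crux. At this point of the argument you only know that the analytic assignment $P\mapsto\alpha_P$ takes values in the algebraic curve $\bar\gamma$; this makes $P\mapsto(\alpha_P,\beta_P)$ an analytic \emph{branch} of the incidence correspondence $\{(P,\alpha)\in\bar\Phi\times\bar\gamma:\ P\in\alpha\}$, not yet a rational map. The Zariski closure of its graph may have degree $\ge 2$ over $\bar\Phi$: the hypotheses of Theorem~\ref{schicho} exclude only \emph{infinitely} many conics through a point, so nothing at this stage prevents several members of $\bar\gamma$ from passing through a generic point of $\bar\Phi$, with monodromy interchanging them (compare the family of tangent lines to a fixed conic in the plane, where an analytic choice of ``one tangent line through each point'' does not extend to a rational map). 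The statement you actually need --- that through a generic point there is exactly one member of the family and the assignment is algebraic, i.e.\ that each family is a pencil --- is precisely Lemma~\ref{l-linear-pencil}, which the paper proves \emph{after} and \emph{using} Lemma~\ref{l-rational} (numerical equivalence implies linear equivalence on a smooth \emph{rational} surface). So your route presupposes the hardest later step and is circular as written. The claimed inverse $(\alpha,\beta)\mapsto\alpha\cap\beta$ has the same defect: Lemma~\ref{l-cover} gives a unique transverse intersection only for members with small parameters, and promoting this to generic members of $\bar\gamma\times\bar\delta$ requires an intersection-number argument on a desingularization that you do not supply. Note also that the weaker conclusion you could honestly extract (a dominant map from a product of curves, hence unirationality) would still need Castelnuovo's rationality theorem, and rationality of $\bar\gamma,\bar\delta$ themselves runs into the same single-valuedness problem.

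For contrast, the paper obtains rationality without ever turning the conic families into algebraic maps: by Lemmas~\ref{l-uniruled} and~\ref{l-ruled} the surface is birationally ruled, $\bar\Phi\dasharrow\gamma\times P^1$, and since the two rational curves through a generic point cannot both be contracted by the projection to $\gamma$, the L\"uroth theorem makes $\gamma$ rational, hence $\bar\Phi$ is rational. Your final step (restrict to a conic, apply L\"uroth) is in the same spirit, but it only becomes available once a genuine rational map to a parameter curve exists --- which is what the ruling supplies and your $F$ does not.
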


\begin{proof}
By Lemmas~\ref{l-uniruled} and~\ref{l-ruled} the surface $\bar\Phi$ is birationally ruled. Thus there is a birational map $\bar\Phi\dasharrow\gamma\times P^1$. Consider the two conics through a generic point of the surface $\bar\Phi$. Their images are two distinct rational curves through a point of $\gamma\times P^1$. Since there is only one $P^1$-fiber through each point, at least one of the rational curves is nonconstantly projected to the curve $\gamma$. By the L\"uroth theorem \cite[Theorem~V.4]{Beauville} the curve $\gamma$ must be rational, and hence $\bar\Phi$  is rational.
\end{proof}

\begin{rem} We conjecture that the following generalization of Lemma~\ref{l-rational} is true: \emph{an algebraic surface containing two rational curves through almost each point is rational}. See~\cite[Definition~IV.3.2, Theorems~ IV.5.4, IV.3.10.3, 
Corollary~IV.5.2.1, Exercise~IV.3.12.2]{Kollar} for a sketch of the proof.
\end{rem}

\begin{lem}\label{l-linear-pencil} Each of the families $\alpha_t$ and $\beta_s$ 
consists of level sets of some rational function~$\bar\Phi\dasharrow P^1$.
\end{lem}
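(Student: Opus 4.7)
The plan is to realize the family $\{\alpha_t\}$ as the set of fibers of a rational function $\bar\Phi \dasharrow P^1$. First I would build a rational map $f_\alpha : \bar\Phi \dasharrow \bar\gamma_\alpha$ sending a generic point of $\bar\Phi$ to the parameter of the conic of the family through it, and then compose with a birational map $\bar\gamma_\alpha \dasharrow P^1$ available because $\bar\gamma_\alpha$ is rational (by a L\"uroth argument analogous to that in the proof of Lemma~\ref{l-rational}: a generic conic $\beta_s$ of the transverse family, itself rational, maps nonconstantly to $\bar\gamma_\alpha$ via $P \mapsto \alpha_P$).

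Concretely, I would form the incidence variety
\[
\bar\Psi_\alpha := \{(P,\alpha) \in \bar\Phi \times \bar\gamma_\alpha : P \in \alpha\},
\]
an algebraic subset of dimension $2$ equipped with projections $\pi_1 : \bar\Psi_\alpha \to \bar\Phi$ and $\pi_2 : \bar\Psi_\alpha \to \bar\gamma_\alpha$. The assignment $s : \Phi \cap \Omega \to \bar\Psi_\alpha$, $P \mapsto (P,\alpha_P)$, provided by Lemma~\ref{l-cover}, is an analytic section of $\pi_1$ whose image is a $2$-dimensional irreducible analytic subset. Let $\bar\Psi'_\alpha$ denote the unique irreducible algebraic component of $\bar\Psi_\alpha$ containing this image.

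The heart of the proof is to show that $\pi_1|_{\bar\Psi'_\alpha}$ is birational. Surjectivity is automatic, because morphisms of projective varieties are closed and the image contains the Zariski-dense open set $\Phi \cap \Omega$. Granting birationality, composing the rational inverse of $\pi_1|_{\bar\Psi'_\alpha}$ with $\pi_2$ yields a rational map $f_\alpha : \bar\Phi \dasharrow \bar\gamma_\alpha$ whose fiber over a generic $t \in \bar\gamma_\alpha$ is precisely $\alpha_t$ (up to a base locus of dimension at most $1$, absorbed in the exceptional one-dimensional subset allowed in the statement). Composing $f_\alpha$ with a birational identification $\bar\gamma_\alpha \dasharrow P^1$ gives the required rational function; the analogous construction applied to the $\beta$-family and to $\bar\gamma_\beta$ produces the second rational function.

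The main obstacle is verifying the birationality of $\pi_1|_{\bar\Psi'_\alpha}$: in principle an irreducible component of the incidence could carry multiple algebraic sheets above $\bar\Phi$, globally connected by monodromy even if locally only one sheet is selected by $s$. One expects to rule this out by exploiting the second family jointly---the map $P \mapsto (\alpha_P, \beta_P)$ is, by the transversality of Lemma~\ref{l-cover}, a local biholomorphism from a neighborhood of $P_0$ onto a neighborhood of $(\alpha_0,\beta_0)$ in $\bar\gamma_\alpha \times \bar\gamma_\beta$, and the uniqueness clause $\alpha_0 \cap \beta_0 = \{P_0\}$ of Lemma~\ref{l-cover} shows that $\pi_1^{-1}((\alpha_0,\beta_0))$ in the analogous joint incidence $\{(P,\alpha,\beta): P \in \alpha \cap \beta\}$ consists of a single unramified point, forcing the projection onto $\bar\gamma_\alpha \times \bar\gamma_\beta$ to have degree one; combined with the rationality of $\bar\Phi$ this in turn forces $\pi_1|_{\bar\Psi'_\alpha}$ to have degree one.
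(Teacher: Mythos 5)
Your construction of the parameter map via the incidence correspondence is a genuinely different route from the paper (which never builds this map directly: it pulls the curves $\alpha_t$ back to a desingularization, notes that constancy of intersection numbers makes them numerically equivalent, invokes the fact that on a smooth \emph{rational} surface numerical equivalence implies linear equivalence, and then uses the finiteness assumption of Theorem~\ref{schicho} to rule out the family spanning a linear system of dimension $\ge 2$). But your proposal has a genuine gap at exactly the step you flag as the heart of the matter: the claim that degree one of the projection of the joint incidence onto $\bar\gamma_\alpha\times\bar\gamma_\beta$, ``combined with the rationality of $\bar\Phi$'', forces $\pi_1|_{\bar\Psi'_\alpha}$ to have degree one. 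No argument is given for this implication, and it is false as a general principle. Concretely, take $\bar\Phi=P^2$ identified with $\mathrm{Sym}^2P^1$, and for $P=x+y$ with $x\ne y$ let $\alpha_P$ and $\beta_P$ be the lines $\ell_x=\{x+z: z\in P^1\}$ and $\ell_y=\{y+z: z\in P^1\}$ (a legitimate local analytic choice near a base point with $x_0\ne y_0$). Each pair $\ell_x,\ell_{y'}$ meets transversely at the single point $x+y'$; both families lie on the irreducible curve of tangent lines of a conic; $\bar\Phi$ is rational; the relevant component of the joint incidence is the closure of $\{(x+y,\ell_x,\ell_y)\}\cong P^1\times P^1$ and projects with degree one onto $\bar\gamma_\alpha\times\bar\gamma_\beta$, exactly as in your argument. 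Yet $\pi_1\colon\bar\Psi'_\alpha\to P^2$ has degree two (two tangent lines pass through a generic point), and the family of tangent lines is \emph{not} the set of level sets of any rational function. So every hypothesis you actually use is satisfied while your conclusion fails.

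This does not contradict Lemma~\ref{l-linear-pencil} only because the example violates the finiteness assumption of Theorem~\ref{schicho} (every line through every point of $P^2$ lies on the surface), and that assumption is precisely what your argument never invokes; any correct proof must use it, since it is what kills the monodromy/extra-sheet phenomenon you describe. Note also that the finiteness assumption does not rescue your argument by a simple degree count: $\deg\pi_1=2$ is perfectly compatible with only finitely many conics through each point, so showing $\deg\pi_1=1$ under the actual hypotheses needs a real idea. In the paper the finiteness assumption enters after the linear-equivalence step (a nonlinear family of numerically, hence linearly, equivalent degree-two curves would sit in a linear system of dimension $\ge2$ and produce infinitely many conics or lines through each point of an open set), and rationality of $\bar\Phi$ is used there in a completely different way --- as the hypothesis of Zariski's theorem that numerical equivalence implies linear equivalence --- not as a constraint on degrees of dominant maps, which it does not provide. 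The remaining parts of your plan (the incidence set-up, surjectivity of $\pi_1$, identifying the generic fibers with the conics, and identifying $\bar\gamma_\alpha$ with $P^1$ via L\"uroth once a nonconstant rational map from a $\beta$-conic exists) are sound modulo this missing step, but as written the proposal does not prove the lemma.
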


\begin{proof} 
We use the following notions; see \cite[\S III.1]{Shafarevich} for details. In order to apply intersection theory, take a \emph{desingularization} $d:\tilde\Phi\to\bar\Phi$; see the first paragraph of the proof of Lemma~\ref{l-ruled}.
A \emph{divisor} on $\tilde\Phi$ is a formal linear combination of irreducible algebraic curves on $\tilde\Phi$ with integer coefficients. Closure of the preimage of an algebraic hypersurface under a rational map from $\tilde\Phi$ to a projective space can be considered as a divisor, once the preimage is one-dimensional and one counts the irreducible components with their multiplicities.
The collection of preimages of all the hyperplanes under a rational map is called a 
\emph{linear family} of divisors.
In particular, a one-dimensional \emph{linear} 
\emph{family} of divisors is the collection of level sets of a rational function. 
Two divisors are \emph{linear equivalent}, 
if they are contained in a one-dimensional linear 
family.
The \emph{intersection} $D_1\cap D_2$ of two divisors $D_1$ and $D_2$ on $\tilde\Phi$ is the number of their intersection points counted with multiplicities (once the number of intersection points is finite).
Two divisors are \emph{numerically equivalent}, if their intersection with
each algebraic curve on $\tilde\Phi$ are equal, once the number of intersection points is finite. The \emph{degree} of a divisor is the sum of the degrees of the irreducible components counted with multiplicities. The degree of a divisor equals the intersection of the divisor with a generic hyperplane section of $\tilde \Phi$.

Assume to the contrary that one of the analytic families $\alpha_t$ and $\beta_s$, say, the first one, is not linear.
By the Hironaka theorem 
the inverse map $d^{-1}\colon \bar\Phi\dasharrow\tilde \Phi$ is defined everywhere except a finite set. Thus the pullback $d^{-1}\alpha_t$ is an analytic family of algebraic curves. We use the notation $d^{-1}\alpha_t$ for the family of (closed) algebraic curves (not to be confused with the set of preimages $d^{-1}(\alpha_t)$ being algebraic curves possibly with a finite number of points removed).
For an algebraic curve $\beta\subset\tilde\Phi$ distinct from
each ${d^{-1}\alpha_t}$
the intersection $\beta\cap {d^{-1}\alpha_t}$ continuously depends on $t$, and hence is constant. Thus each two curves of the family ${d^{-1}\alpha_t}$ are numerically equivalent.
On a smooth rational surface, numerical equivalence implies linear equivalence 
\cite{Zariski}.
Since $\alpha_t$ is nonlinear it follows that ${d^{-1}\alpha_t}$ is nonlinear and hence must be contained in a linear family of divisors in $\tilde\Phi$ of dimension at least $2$.

The image of the latter family under the projection $d\colon\tilde\Phi\to\bar\Phi$ is at least two-dimensional algebraic family $\gamma$ of divisors on $\bar\Phi$. The divisors of the family $\gamma$ are linear combinations of curves with positive coefficients, because they arise from a linear family (and negative numbers cannot occur as multiplicities of preimages). All the divisors of the family $\gamma$ have the same degree because their pullbacks are numerically equivalent. (Indeed, for two divisors $D_1,D_2\in\gamma$, and a general position hyperplane $H\subset P^n$ we have $\deg D_1=H\cap D_1=d^{-1}H\cap d^{-1}D_1=d^{-1}H\cap d^{-1}D_2=H\cap D_2=\deg D_2$.) 
Since the curves $\alpha_t$ are conics it follows that the degrees of all these divisors are $2$.
Therefore the divisors of the 
family $\gamma$ are either conics or pairs of lines or lines of multiplicity $2$.  
This is possible only if there are infinitely many conic sections or lines through each point in an open subset of the surface, which contradicts to the assumptions of Theorem~\ref{schicho}. Thus both families $\alpha_t$ and $\beta_s$ must be linear.
\end{proof}

\begin{lemma}\label{l-parametrization}
There is a rational map $\bar\Phi\dasharrow P^1\times P^1$
taking the families $\alpha_t$ and $\beta_s$ to the sets of curves $P^1\times t$ and $s\times P^1$ respectively, such that the restriction of the map to some dense sets is bijective.
\end{lemma}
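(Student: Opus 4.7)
The plan is to construct $\Xi$ directly from the two rational functions provided by Lemma~\ref{l-linear-pencil}. Let $f,g\colon\bar\Phi\dasharrow P^1$ be rational functions whose level sets on $\bar\Phi$ are, respectively, the members of the families $\alpha_t$ and $\beta_s$. After reparametrising the analytic families by the birational maps of $P^1$ induced by $f$ and $g$, I may assume that $f\equiv t$ on $\alpha_t$ and $g\equiv s$ on $\beta_s$. I would then set
\[
\Xi := (g,f)\colon\bar\Phi\dasharrow P^1\times P^1.
\]
This is a rational map, and by construction it sends the open dense part of $\alpha_t$ where $g$ is defined into the line $P^1\times\{t\}$, and symmetrically for $\beta_s$.

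The next step is to verify that the image of $\alpha_t$ is the \emph{entire} line $P^1\times\{t\}$, not a single point. Suppose, for contradiction, that $g|_{\alpha_t}$ were constant for some (and hence, by analytic continuation in $t$, for a generic) value of $t$. Then $\alpha_t$ would lie inside a single level set of $g$, i.e.\ inside a single conic $\beta_s$; but Lemma~\ref{l-cover} asserts that for every $(s,t)$ in a neighborhood of $(0,0)$ the conics $\alpha_t$ and $\beta_s$ meet transversely at a single point $P(s,t)$, which rules out any such inclusion. Hence $g|_{\alpha_t}$ is a nonconstant rational map to $P^1$, its image is all of $P^1$, and so $\Xi(\alpha_t)=P^1\times\{t\}$; the same argument gives $\Xi(\beta_s)=\{s\}\times P^1$.

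Finally, the bijection on dense subsets comes for free from Lemma~\ref{l-cover}. The analytic parametrization $P\colon U\to\Phi\cap\Omega$, $(s,t)\mapsto\alpha_t\cap\beta_s$, satisfies $f(P(s,t))=t$ and $g(P(s,t))=s$, so $\Xi\circ P=\mathrm{id}_U$. Consequently $\Xi$ restricts to a bijection between the dense open subsets $P(U)\subset\bar\Phi$ and $U\subset P^1\times P^1$, which is exactly the conclusion of the lemma.

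The main obstacle I anticipate is the nonconstancy step in the middle paragraph: one must exclude the degenerate possibility that the two rational functions $f$ and $g$ are functionally dependent on $\bar\Phi$, in which case the whole family $\alpha_t$ would collapse into a family $\beta_s$. Once this is ruled out using the transversality in Lemma~\ref{l-cover}, the remainder is a routine assembly of Lemmas~\ref{l-cover} and~\ref{l-linear-pencil}.
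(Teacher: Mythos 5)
Your construction is the same as the paper's: the map is the pair of rational functions supplied by Lemma~\ref{l-linear-pencil}, and your identity $\Xi\circ P=\mathrm{id}_U$ near $(0,0)$ (coming from the unique transversal intersection in Lemma~\ref{l-cover}) is correct, as is the optional verification that each $\alpha_t$ maps onto the whole line rather than a point. The gap is in your final step. The sets you exhibit, a small neighbourhood $U$ of $(0,0)$ and $P(U)\subset\bar\Phi$, are \emph{not} dense: the paper works throughout with the standard topology on projective space, and an open ball in $\mathbb{C}^2$ is not dense in $P^1\times P^1$, nor is the small analytic piece $P(U)$ dense in $\bar\Phi$. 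So a bijection between $P(U)$ and $U$ is not ``exactly the conclusion of the lemma''; it is only a local statement near one point, whereas the lemma (and its use in combination with Lemma~\ref{l-postulate}, which is stated for \emph{open dense} subsets of rational surfaces) requires bijectivity on dense subsets.

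What is missing is the local-to-generic argument that constitutes the actual content of the paper's proof beyond writing down the map. Namely: since the image of $\Xi$ contains a neighbourhood of $(0,0)$, the map is dominant, so its image is dense; and for a dominant rational map between irreducible complex projective surfaces the number of preimages of a generic point is constant and equals the degree of the induced extension of function fields. Because every $(s,t)$ with $|s|,|t|$ small has exactly one preimage (the unique point $\alpha_t\cap\beta_s$ of Lemma~\ref{l-cover}), this degree is $1$, and therefore the map restricts to a bijection between suitable dense subsets, as claimed. Without this step, ``bijective near $(0,0)$'' does not upgrade to ``bijective on dense sets'', and the subsequent application of Lemma~\ref{l-postulate} would not go through.
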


\begin{proof}
Consider the pair of rational functions given by Lemma~\ref{l-linear-pencil} whose level sets are the two families  of conics $\alpha_t$ and $\beta_s$. The pair of rational functions defines a rational map $\bar\Phi\dasharrow P^1\times P^1$. Since for sufficiently small $|s|, |t|$ each pair $\alpha_t$ and $\beta_s$  has an intersection point it follows that the image of this rational map contains a neighborhood of $(0,0)\in P^1\times P^1$, and thus is dense. Since the intersection point is unique it follows that the point $(s,t)\in P^1\times P^1$ has exactly one preimage for sufficiently small $|s|, |t|$, and hence for almost all $s,t$. Thus  the restriction of the rational map $\bar\Phi\dasharrow P^1\times P^1$ to appropriate dense subsets  is bijective.
\end{proof}

Now we apply the following well-known result for which we could not find any direct reference. (We have found several more general results in the literature but each time the proof that the inverse map is rational, the only assertion we need, was omitted.)

\begin{lemma}\label{l-postulate} If a rational map between open dense subsets of rational surfaces is bijective then the inverse map is rational as well, and hence birational.
\end{lemma}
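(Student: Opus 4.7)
I plan to prove Lemma~\ref{l-postulate} via the graph of $f$. Let $f\colon X\dashrightarrow Y$ be the given rational map between rational (in particular irreducible) surfaces, and suppose it restricts to a bijection $f|_U\colon U\to V$ between dense open subsets. First I would consider the graph $G:=\{(x,f(x)):x\in U\}\subset X\times Y$ and its Zariski closure $\bar G\subset X\times Y$. Since $G$ is the image of the irreducible set $U$ under the morphism $(\mathrm{id},f|_U)$, the set $G$ is irreducible and dense in $\bar G$; hence $\bar G$ is an irreducible algebraic subvariety of $X\times Y$ of dimension~$2$.

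Next I would analyze the two projections $\pi_X\colon\bar G\to X$ and $\pi_Y\colon\bar G\to Y$. The first restricts on the dense open subset $G\subset\bar G$ to the inverse of the isomorphism $(\mathrm{id},f|_U)\colon U\to G$, hence is birational. For the second, bijectivity of $f|_U$ implies that $\pi_Y$ sends $G$ injectively onto the dense open subset $V\subset Y$; in particular $\pi_Y$ is dominant and the fiber over a generic point of $Y$ consists of a single point.

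Finally I would invoke the standard fact that in characteristic zero a dominant rational map between irreducible varieties of the same dimension whose generic fiber is a single point is birational: its degree equals the cardinality of the generic fiber, which is~$1$, so the induced field extension $\mathbb{C}(Y)\hookrightarrow\mathbb{C}(\bar G)$ has degree~$1$ and is therefore an isomorphism (no separability issue arises over $\mathbb{C}$). Consequently $\pi_Y$ admits a rational inverse $\pi_Y^{-1}\colon Y\dashrightarrow\bar G$, and composing with $\pi_X$ produces a rational map $Y\dashrightarrow X$ which by construction agrees with the set-theoretic inverse of $f|_U$ on $V$ and is therefore the desired rational extension of $f^{-1}$.

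The main subtlety I expect is in justifying the step from ``set-theoretic bijectivity on a dense open subset'' to ``generic fiber of $\pi_Y$ is a single reduced point of degree~$1$'': one must rule out the possibility of a thickened fiber of positive multiplicity, which is automatic in characteristic zero (where generic smoothness holds for dominant maps of smooth varieties and any nontrivial finite field extension has degree at least~$2$ of separability) but is precisely where the hypothesis of working over $\mathbb{C}$ is essential.
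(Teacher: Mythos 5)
Your graph-and-degree strategy is a genuinely different route from the paper's proof (which restricts $f$ to the preimages of the lines $s\times P^1$, shows the restricted graph is cut out by a polynomial of degree $1$ in each variable, and then assembles rationality of $f^{-1}$ separately in $s$ and in $t$), and it would be essentially complete if the open dense subsets $U,V$ were \emph{Zariski} open. But the paper fixes the standard (Euclidean) topology throughout Section~3, and the sets to which the lemma is applied (coming from Lemma~\ref{l-parametrization}) are only dense in that sense. This is exactly where your argument has a gap: you pass from ``$\pi_Y$ is injective on $G$'' to ``the fiber of $\pi_Y$ over a generic point of $Y$ is a single point.'' That inference is not automatic, because $G$ (the graph over $U$) need not contain a Zariski-open subset of $\bar G$: the part of the closed graph lying over $\mathrm{dom}(f)\setminus U$ is in general Zariski dense in $\bar G$, so if the set-theoretic degree of $\pi_Y$ were $d\ge 2$, a generic fiber could a priori consist of one point of $G$ together with $d-1$ points of $\bar G\setminus G$ --- perfectly compatible with injectivity on $G$. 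The statement ``$d=1$'' is precisely what must be proved, and injectivity on a merely analytically dense open set does not yield it for free. (The subtlety you do flag --- thickened or inseparable fibers --- is a red herring over $\mathbb{C}$; the real issue is the possible extra preimages outside $U$.)

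The gap is fixable by a short additional argument, after which your degree-one/function-field conclusion and the composition $\pi_X\circ\pi_Y^{-1}$ go through: off a proper Zariski-closed subset, $f$ is a $d$-sheeted covering map in the standard topology; over an evenly covered open set $W\subset Y$ with sheets $W_1,\dots,W_d$, each $U\cap W_i$ is open and dense in $W_i$, hence $\bigcap_{i=1}^{d} f(U\cap W_i)$ is a nonempty open subset of $W$, and any of its points has $d$ distinct preimages all lying in $U$; bijectivity of $f|_U$ then forces $d=1$. With this step added, your proof is a clean alternative to the paper's curve-by-curve argument, and it avoids the paper's appeal to separate rationality in the two variables.
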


\begin{proof} (S. Orevkov, private communication) It suffices to prove the lemma for a map $f$ between open dense subsets of $P^1\times P^1$. For a generic $s\in P^1$
the preimage $f^{-1}(s\times P^1)$ is an open dense subset of the algebraic curve $\beta$ defined by the algebraic equation $\mathrm{pr}_1f(u,v)=s$ in the variables $u,v\in P^1$, where $\mathrm{pr}_1\colon P^1\times P^1\to P^1$ is the first projection. 

Restrict the map $f$ to the preimage. We get a bijective rational map between open dense subsets of the curves $\beta$ and $s\times P^1$. Clearly, it extends to a rational homeomorphism $\beta\to s\times P^1$. Then by the classification of algebraic curves the curve $\beta$ is rational. Identify $\beta$ and $s\times P^1$ with $P^1$. Consider the graph of the rational map $\beta\to s\times P^1$ as a subset of $P^1\times P^1$. By elimination of variables it follows that the graph is the zero set of some polynomial $P\in \mathbb{C}[u,v]$. Since the map $\beta\to s\times P^1$ is bijective it follows that the polynomial $P$ has degree $1$ in each variable and hence the inverse map $s\times P^1\to\beta$ is also rational. 

This implies that the inverse map $f^{-1}(s,t)$ is rational in the variable $t$ for fixed generic $s$. Analogously, $f^{-1}(s,t)$ is rational in $s$ for fixed generic $t$. 
Thus $f^{-1}(s,t)$ is rational. 
\end{proof}

\begin{lemma}\label{l-final} Assume that a birational map $P^1\times P^1\dasharrow \bar\Phi$ takes the sets of curves $P^1\times t$ and $s\times P^1$ to conics or lines. Write the birational map as
$(u,v)\mapsto X_0(u,v):\dots :X_n(u,v)$ for some coprime $X_0,\dots,X_n\in\mathbb{C}[u,v]$.
Then $X_0,\dots,X_n\in\mathbb{C}_{22}$.
\end{lemma}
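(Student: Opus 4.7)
The plan is to bound $\deg_u X_i$ and $\deg_v X_i$ separately; by symmetry it suffices to handle $\deg_u X_i \le 2$. The strategy is to specialize $v = t$ for a generic value of $t$: the restriction of the given birational map to the curve $P^1 \times t$ parametrizes the image conic or line $\alpha_t$, whose degree in $P^n$ is at most $2$. If I can show that (a) the restricted parametrization is birational onto $\alpha_t$ and (b) the specializations $X_0(u,t),\dots,X_n(u,t)$ have no common factor as polynomials in $u$ for generic $t$, then the usual formula "degree of parametrization equals degree of image times degree of map" gives $\max_i \deg_u X_i(u,t) \le 2$ for generic $t$. Since the leading coefficient of each $X_i$ as a polynomial in $u$ over $\mathbb{C}[v]$ is a polynomial in $v$ that vanishes at all but finitely many $t$ only if it is identically zero, this forces $\deg_u X_i \le 2$ for every $i$.

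For step (a), I would argue as follows. The birational map has a rational inverse $\bar\Phi \dasharrow P^1 \times P^1$ defined on a dense open subset. A generic point $P \in \bar\Phi$ then has a unique preimage $(u_0,t_0)$. Since $P \in \alpha_{t_0}$, any other point $(u,t_0) \in P^1 \times t_0$ mapping to $P$ would give a second preimage, contradicting uniqueness. Thus for generic $t_0$, the restriction $P^1 \times t_0 \to \alpha_{t_0}$ is generically injective, hence birational onto $\alpha_{t_0}$. For step (b), I would use that $X_0,\dots,X_n$ are coprime in $\mathbb{C}[u,v]$, hence coprime in $\mathbb{C}(v)[u]$ (which is a PID by Gauss's lemma). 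So $1 = \sum_i a_i(u,v) X_i(u,v)$ with $a_i \in \mathbb{C}(v)[u]$; clearing denominators in $v$ yields a nonzero $g(v) \in \mathbb{C}[v]$ with $g(v) = \sum_i b_i(u,v) X_i(u,v)$ for some $b_i \in \mathbb{C}[u,v]$. For any $t$ with $g(t) \ne 0$, the specialized polynomials $X_0(u,t),\dots,X_n(u,t)$ generate the unit ideal in $\mathbb{C}[u]$, hence are coprime.

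Combining, for generic $t$ the specialization defines a birational map $P^1 \to \alpha_t$ given by coprime polynomials of $u$-degree equal to $\deg \alpha_t \le 2$, which establishes $\deg_u X_i \le 2$. The symmetric argument (interchanging the roles of $u$ and $v$, and of $\alpha_t$ and $\beta_s$) gives $\deg_v X_i \le 2$, so $X_i \in \mathbb{C}_{22}$ as required. The main subtle point I expect to have to nail down carefully is step (b), the coprimeness of specializations: without it one only bounds $\max_i \deg_u X_i(u,t) - \deg \gcd$, which would not suffice to bound $\deg_u X_i$ itself. Step (a) is genuinely short once one writes out what "birational" means on preimages, and the degeneration of $\alpha_t$ into a double line or into a pair of lines causes no trouble because the image of the irreducible curve $P^1$ is irreducible, so $\alpha_t$ is either an irreducible conic or a line in either case of degree at most $2$.
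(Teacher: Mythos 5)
Your proposal is correct and follows essentially the same route as the paper: restrict to a generic coordinate line, note its image is a conic or line of degree at most $2$, use coprimality of $X_0,\dots,X_n$ to ensure the specialized polynomials have no common root, and use (generic) injectivity of the birational map to bound the degree of the restricted parametrization, then conclude variable-by-variable. The only cosmetic difference is packaging: the paper counts intersections with a generic hyperplane directly (with multiplicity one via nondegenerate differential after eliminating indeterminacy), whereas you invoke the standard formula ``degree of a coprime parametrization $=$ degree of image $\times$ degree of map'' together with birationality of the restriction, which amounts to the same argument.
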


\begin{proof}
For a birational map between surfaces, by elimination of indeterminacy \cite[Theorem~II.7]{Beauville} there is always an algebraic curve $\sigma$ such that  outside the curve $\sigma$ the map is injective and has nondegenerate differential.
Fix a generic value of $u$. Denote $X_k(v):=X_k(u,v)$. The curve $X_0(v):\dots :X_n(v)$ is a conic or a line.
Cut it by a generic hyperplane $\lambda_0 x_0+\dots \lambda_nx_n=0$ in $P^n$. The intersection consists of at most $2$ points. By general position they are not contained in the image of $\sigma$.
Since the $X_0(u,v),\dots,X_n(u,v)$ are coprime it follows that
$X_0(v),\dots,X_n(v)$ have no common roots.
Since the birational map is injective outside $\sigma$ it follows that the equation $\lambda_0 X_0(v)+\dots +\lambda_nX_n(v)=0$ has at most $2$ solutions. These solutions have multiplicity $1$ because the birational map has nondegenerate differential outside $\sigma$.
This implies that the polynomials $X_1(v),\dots,X_n(v)$ have degree at most $2$. Analogously, $X_1(u,v),\dots,X_n(u,v)$ have degree at most $2$ in $u$, and the lemma follows.
\end{proof}

\begin{proof}[Proof of Theorem~\ref{schicho}] It follows directly by Lemmas~\ref{l-cover}, \ref{l-algebraic}, \ref{l-parametrization}, \ref{l-postulate}, \ref{l-final}.
\end{proof}

\begin{rem} Theorem~\ref{schicho} remains true (with almost the same proof) without the assumption that the number of conic sections through certain points is finite except that then one cannot conclude that the two drawn conic sections $\alpha_P$ and $\beta_P$ are necessarily the curves $u=\mathrm{const}$ and $v=\mathrm{const}$.
\end{rem}

For the proof of Corollary~\ref{haupt} we need the following lemmas. In the rest of this section $\Phi\subset S^{n-1}\subset\mathbb{R}^n$ is a surface satisfying the assumptions of Corollary~\ref{haupt}.

\begin{lem} \label{l-reduction} The surface $\Phi$ (possibly besides a one-dimensional subset) has parametrization~\eqref{eq-circular},
where $X_{0},\dots,X_{n}\in\mathbb{C}_{22}$
satisfy equation~\eqref{eq-sphere},
and $(u,v)$ runs through some (not open) subset of~$\mathbb{C}^2$.
\end{lem}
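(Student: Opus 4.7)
The plan is to reduce the statement to Theorem~\ref{schicho} via complexification. I embed $\mathbb{R}^n$ into $P^n=P^n(\mathbb{C})$ as the affine chart $X_0\neq 0$, so that the sphere $S^{n-1}$ becomes the real locus of the complex quadric $Q\subset P^n$ defined by $X_1^2+\dots+X_n^2=X_0^2$. Since a real analytic parametrization extends to a complex analytic one on a complex neighbourhood of its domain, $\Phi$ extends by analytic continuation to a complex analytic surface $\Phi^{\mathbb{C}}$ in some domain $\Omega\subset P^n$, with $\Phi\subset\Phi^{\mathbb{C}}\subset Q$ on a dense subset. Real circular arcs in $\mathbb{R}^n$ are real parts of complex conic sections lying on $Q$ (since a circle is cut out of $S^{n-1}$ by an affine $2$-plane, and this description complexifies directly). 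Hence the two analytically depending circular arcs through each point of $\Phi$ extend to two analytically depending conic sections through each point of $\Phi^{\mathbb{C}}$ whose intersections with $\Omega$ lie in $\Phi^{\mathbb{C}}$.

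To invoke Theorem~\ref{schicho} I would verify its three hypotheses for $\Phi^{\mathbb{C}}$. Transversality and uniqueness of intersection follow from noncosphericity: two circles in $\mathbb{R}^n$ meeting at two points $P\neq Q$ lie in two $2$-planes sharing the line $PQ$, hence in a common $3$-flat, whose intersection with $S^{n-1}$ is a $2$-sphere containing both, contradicting the noncospheric assumption; by analytic continuation the complexified conics also intersect transversally at a single point. Analytic dependence is inherited from the real family. Finiteness of conic sections on $\Phi^{\mathbb{C}}$ through a generic point reduces to the hypothesis on real circles: an analytic $1$-parameter family of complex conics lying on $\Phi^{\mathbb{C}}$ and passing through a real point $P\in\Phi$, together with its complex-conjugate family, would by a Galois/conjugation argument produce an analytic $1$-parameter real subfamily of conics defined over $\mathbb{R}$, i.e.\ of real circles on $\Phi$ through $P$, contradicting finiteness.

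Theorem~\ref{schicho} then yields polynomials $X_0,\dots,X_n\in\mathbb{C}_{22}$ and a parametrization $\Phi^{\mathbb{C}}(u,v)=X_0(u,v):\dots:X_n(u,v)$ valid outside a one-dimensional subset, such that the two analytic families of conics correspond to the curves $u=\mathrm{const}$ and $v=\mathrm{const}$. Since $\Phi^{\mathbb{C}}\subset Q$ on a dense open subset, the polynomial $X_1^2+\dots+X_n^2-X_0^2$ vanishes on an open subset of the $(u,v)$-domain, and therefore vanishes identically; this is precisely equation~\eqref{eq-sphere}. Restricting $(u,v)$ to the (necessarily non-open) subset of $\mathbb{C}^2$ on which the projective point $X_0(u,v):\dots:X_n(u,v)$ has real affine image in $\mathbb{R}^n$ recovers $\Phi$ up to the one-dimensional exceptional set.

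The main obstacle is the finiteness verification: one needs to rule out continuous families of complex conics on $\Phi^{\mathbb{C}}$ through a generic real point that are \emph{not} complexifications of real circles. The conjugation argument sketched above is the natural route, but it must be set up carefully, using that $\Phi^{\mathbb{C}}$ is itself invariant under complex conjugation (because it is the complexification of a real analytic surface) so that every irreducible algebraic family of conics on $\Phi^{\mathbb{C}}$ is either conjugation-invariant or comes paired with its conjugate, in either case producing an analytic family of real circles on $\Phi$.
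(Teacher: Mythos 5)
Your overall route is the same as the paper's: extend $\Phi$ and the two families of arcs to a complex analytic surface and complex analytic families of conics, verify the hypotheses of Theorem~\ref{schicho}, apply it, and obtain equation~\eqref{eq-sphere} because the complexified surface lies in the complex quadric extending $S^{n-1}$ (so the polynomial $X_1^2+\dots+X_n^2-X_0^2$ vanishes identically); the transversality-and-unique-intersection observation from noncosphericity and the final restriction of $(u,v)$ to a non-open real subset are likewise exactly what the paper does, the paper compressing the hypothesis check into a one-line ``by analyticity''.

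The one place where you go beyond the paper --- the conjugation argument for the finiteness hypothesis --- does not work as stated. A conjugation-stable one-parameter family of conics on $\Phi^{\mathbb C}$ through a real point $P$ need not contain any real conic: conjugation induces an antiholomorphic involution on the parameter curve whose fixed locus can be empty, and a non-real conic $\alpha$ meets the real locus only in the finite set $\alpha\cap\bar\alpha$, so neither the invariant case nor the conjugate-paired case produces ``an analytic family of real circles on $\Phi$''. Moreover, the hypothesis of Theorem~\ref{schicho} requires finiteness at points of a dense subset of the \emph{complex} surface $\bar\Phi$, in which the real points of $\Phi$ are nowhere dense, so a reduction that only treats real base points does not literally verify it. The natural repair goes through the way the hypothesis is actually used (cf.\ Remark~\ref{rem-infinity}): an excess of conics through each point of an open subset of $\bar\Phi$ forces $\bar\Phi$ into an algebraic surface covered by an at least two-dimensional algebraic family of conics; such a surface containing the totally real two-dimensional $\Phi$ is conjugation-invariant and lies in the complexified sphere, and in each case of the classification one extracts infinitely many real circles of $\Phi$ through points of a dense subset, contradicting the hypothesis of Corollary~\ref{haupt}. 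The paper leaves this verification implicit, but since you made the mechanism explicit, the gap in it should be flagged; the remainder of your argument coincides with the paper's proof and is fine.
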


\begin{proof} 
 Since the circular arcs through each point of $\Phi$ are noncospheric, their respective circles are transversal and intersect at a unique point.
Extend $\Phi\subset S^{n-1}$ analytically to a complex analytic surface $\bar\Phi$ in a sufficiently small neighborhood of $\Phi$ in $P^n$ modulo the boundary (so that the boundaries of $\Phi$ and $\bar\Phi$ are contained in the boundary of the neighbourhood).
Extend the two real analytic families of circular arcs in $\Phi$ to complex analytic families of (complex) conics in $P^n$.
By analyticity $\bar\Phi$ satisfies the assumptions of Theorem~\ref{schicho}.
By Theorem~\ref{schicho} the surface $\bar\Phi$ (possibly besides a one-dimensional subset) has parametrization~\eqref{eq-circular} by polynomials $X_0,\dots,X_n\in\mathbb{C}_{22}$ such that the circular arcs have the form $u=\mathrm{const}$ and $v=\mathrm{const}$. (However, the converse is not true: most of the curves $u=\mathrm{const}$ and $v=\mathrm{const}$ are not circular arcs but parts of complex conics in $\bar\Phi$.) Since the surface $\bar\Phi$ is contained in the complex quadric extending the sphere $S^{n-1}$, it follows that the polynomials satisfy equation~\eqref{eq-sphere}.
\end{proof}

Let us reparametrize the surface to make the polynomials $X_0,\dots,X_n$ real.

\begin{lem} \label{l-make-real} The surface $\Phi$ (possibly besides a one-dimensional subset) has a parametrization~\eqref{eq-circular},
where $X_{0},\dots,X_{n}\in\mathbb{C}_{22}$
satisfy equation~\eqref{eq-sphere},
and $(u,v)$ runs through certain open subset of~$\mathbb{R}^2$.
\end{lem}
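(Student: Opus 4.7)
The plan is to reparametrize $u$ and $v$ by Möbius transformations so that the real points of $\Phi\subset S^{n-1}$ correspond exactly to $(u,v)\in\mathbb{R}^2$. The key tool is an antiholomorphic involution on $\bar\Phi$ induced by complex conjugation on $P^n$.

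First I would observe that $\bar\Phi$ is defined over $\mathbb{R}$: the ideal of polynomials vanishing on the real-analytic surface $\Phi$ is stable under complex conjugation, and $\bar\Phi$ is its Zariski closure by Lemma~\ref{l-algebraic}. Hence complex conjugation on $P^n$ restricts to an antiholomorphic involution $\sigma:\bar\Phi\to\bar\Phi$ whose fixed locus contains $\Phi$. Next I would check that each of the two families of conics $\alpha_t,\beta_s$ in $\bar\Phi$ is individually $\sigma$-invariant. Since these families arise by complex-analytic continuation from real-analytic families of real circles in $\Phi$, for a real parameter value $\sigma$ pointwise fixes the real circle $\alpha_t\cap\Phi$, so $\sigma(\alpha_t)$ must share infinitely many points with $\alpha_t$; by transversality, conics of the other family meet $\alpha_t$ in only finitely many points, so $\sigma(\alpha_t)=\alpha_{\tau_u(t)}$ for some antiholomorphic involution $\tau_u$, and similarly on the $s$-family. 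Thus $\sigma$ descends to antiholomorphic involutions $\tau_u,\tau_v$ on the two parameter factors of $P^1\times P^1$ from Lemma~\ref{l-parametrization}.

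Each of $\tau_u,\tau_v$ has fixed points, namely the parameter values of the real circles in $\Phi$. Any antiholomorphic involution of $P^1\cong S^2$ with a fixed point is Möbius-conjugate to standard complex conjugation $u\mapsto\bar u$, the only alternative on $P^1$ being the fixed-point-free antipodal involution. I would therefore apply Möbius reparametrizations of $u$ and $v$ bringing $\tau_u,\tau_v$ into this standard form. Since a Möbius change of either variable preserves the class $\mathbb{C}_{22}$ up to clearing a common denominator of the form $(cu'+d)^2(c'v'+d')^2$, the new polynomials $X_0,\dots,X_n$ still lie in $\mathbb{C}_{22}$ and continue to satisfy equation~\eqref{eq-sphere}.

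In the new coordinates the $\sigma$-fixed locus of $\bar\Phi$ corresponds to $\mathbb{R}^2\subset P^1\times P^1$: for $(u_0,v_0)\in\mathbb{R}^2$, both conics $\alpha_{u_0}$ and $\beta_{v_0}$ are $\sigma$-fixed, hence their unique transverse intersection point $\Phi(u_0,v_0)$ is $\sigma$-fixed, hence real, so it lies in $\Phi$. By Lemma~\ref{l-parametrization} the parametrization is locally bijective on a dense subset, so a full open subset of $\mathbb{R}^2$ maps into $\Phi$, with only at most a one-dimensional subset excluded (the real trace of the complex indeterminacy locus of the rational map). The main obstacle I anticipate is the verification that each family $\alpha_t,\beta_s$ is individually $\sigma$-invariant and that $\tau_u,\tau_v$ genuinely have fixed points, both of which rest on the analytic-continuation construction of the families from real analytic families of real circles on $\Phi$.
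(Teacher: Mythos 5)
Your overall strategy is viable and genuinely different from the paper's. You pull back complex conjugation of $P^n$ to a real structure $\sigma$ on $\bar\Phi$ (legitimate, since $\bar\Phi$ is the Zariski closure of the set of real points $\Phi$), note that each family of conics is $\sigma$-invariant because it contains infinitely many complexified real circles, descend $\sigma$ to antiholomorphic involutions $\tau_u,\tau_v$ with fixed points on the two parameter lines, and normalize them to $u\mapsto\bar u$, $v\mapsto\bar v$ by M\"obius changes of parameters, which indeed preserves $\mathbb{C}_{22}$ and equation~\eqref{eq-sphere} after clearing denominators. The paper instead normalizes three concrete circular arcs to $v=0,\pm1$ (resp.\ $u=0,\pm1$) and uses that on a quadratically parametrized conic the cross-ratio of four points equals the cross-ratio of their parameters; your real-structure argument is a more structural route to the same normalization.

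The genuine gap is in your closing paragraph, where the implication runs in the wrong direction. The lemma requires that every point of $\Phi$, off a one-dimensional subset, equals $\Phi(u_0,v_0)$ for some \emph{real} $(u_0,v_0)$, i.e.\ that the parameter set of $\Phi$ lies in an open subset of $\mathbb{R}^2$. You argue instead that real $(u_0,v_0)$ give $\sigma$-fixed, hence real, points and conclude that they ``lie in $\Phi$'' --- that inference is false, since $\Phi$ is only an analytic piece of the real locus of $\bar\Phi$ and a real point of $\bar\Phi$ need not belong to $\Phi$; and even the weaker true statement that an open subset of $\mathbb{R}^2$ maps into the real locus would not show that $\Phi$ itself is parametrized. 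The fix is available inside your own framework: for a point $P\in\Phi$ off the indeterminacy locus, the two circular arcs through $P$ are real circles whose complexifications are curves $u=\mathrm{const}$, $v=\mathrm{const}$ (this is part of Lemma~\ref{l-reduction}), hence $\sigma$-invariant members of the two families; therefore their parameters are fixed by $\tau_u$, resp.\ $\tau_v$, i.e.\ lie in $\mathbb{R}\cup\{\infty\}$ after your normalization. Discarding the two conics $u=\infty$, $v=\infty$ and the indeterminacy locus (a one-dimensional subset), all parameters of points of $\Phi$ are real, and openness of the parameter set follows by analyticity (invariance of domain, using generic injectivity from Lemma~\ref{l-parametrization}). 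You should also make explicit the extension of $\sigma$-invariance of the families from real to complex parameter values, e.g.\ by Zariski density of the real-parameter members in the irreducible algebraic family of Lemma~\ref{l-algebraic}; with these two points supplied your proof goes through.
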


\begin{proof} Start with the parametrization given by Lemma~\ref{l-reduction}. Draw two circular arcs of the form $u=\mathrm{const}$ and $v=\mathrm{const}$ through a point of the surface $\Phi$. Through another pair of points of the first circular arc, draw two more circular arcs of the form $v=\mathrm{const}$.
Perform a complex fractional-linear transformation of the parameter $v$ so that the second, the third, and the fourth circular arcs obtain the form $v=0,\pm1$, respectively (we consider the part of the surface where the denominator of the transformation does not vanish).
Perform an analogous transformation of the parameter $u$ so that $u=0,\pm1$ become circular arcs intersecting the circular arc $v=0$.
After performing the transformations and clearing denominators we get a parametrization $\Phi(u,v)=X_{0}(u,v):\dots :X_{n}(u,v)$ of the surface $\Phi\subset S^{n-1}$, where still $X_{0},\dots,X_{n}\in\mathbb{C}_{22}$ and $(u,v)$ runs through a subset $\Psi\subset\mathbb{C}^2$.

Let us prove that actually 
$\Psi\subset\mathbb{R}^2$. Take $(\hat u,\hat v)\in\Psi$ sufficiently close to $(0,0)$. Then $\Phi(\hat u,\hat v)$ is a point of the surface $\Phi$ sufficiently close to $\Phi(0,0)$. Draw the two circular arcs $u=\mathrm{const}$ and $v=\mathrm{const}$ through the point $\Phi(\hat u,\hat v)$.  By continuity it follows that the circular arc $v=\mathrm{const}$ intersects the circular arc $u=0$ in $\Phi$. The intersection point can only be $\Phi(0,\hat v)$. In particular, we get $(0, \hat v)\in\Psi$.
In a quadratically parametrized conic, the cross-ratio of any four points equals the cross-ratio of their parameters. Since three (real) points $v=0,\pm 1$ of the circular arc $u=0$ have real $v$-parameters it follows that all (but one) points of the circular arc have real $v$-parameters. In particular, $\hat v\in\mathbb{R}$. Analogously, $\hat u\in\mathbb{R}$. We have proved that all $(\hat u,\hat v)\in\Psi$ sufficiently close to the origin are real. By the analyticity $\Psi$ is an open subset of $\mathbb{R}^2$.
\end{proof}

\begin{lem} \label{l-real} Let $X_0,\dots,X_n\in \mathbb{C}[u,v]$. Assume that for all the points $(u,v)$ from some open subset of $\mathbb{R}^2$ the point $X_0(u,v):\dots:X_n(u,v)$ is real. Then  $X_0=X_0'Y,
\dots,X_n=X_n'Y$ for some real $X_0',\dots,X_n'\in \mathbb{R}[u,v]$ and complex $Y\in\mathbb{C}[u,v]$.			
\end{lem}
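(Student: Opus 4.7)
The plan is to first translate the projective reality condition into a polynomial identity in $\mathbb{C}[u,v]$, and then exploit unique factorization to extract a common complex factor $Y$ that leaves real cofactors.

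A projective point $[x_0:\dots:x_n]$ with $x_k\in\mathbb{C}$ lies in $\mathbb{R}P^n$ if and only if $x_k\bar x_j=\bar x_k x_j$ for all $j,k$. For $(u,v)\in\mathbb{R}^2$ we have $\overline{X_j(u,v)}=\bar X_j(u,v)$, where $\bar X_j\in\mathbb{C}[u,v]$ denotes the polynomial with conjugated coefficients. Since any nonempty open subset of $\mathbb{R}^2$ is Zariski dense in $\mathbb{C}^2$, the hypothesis upgrades to the polynomial identity $X_k\bar X_j=\bar X_k X_j$ in $\mathbb{C}[u,v]$ for all $j,k$. Assuming some $X_j\not\equiv 0$ (else the statement is trivial) and relabeling so that $X_0\not\equiv 0$, we conclude in the fraction field that $\bar X_k/X_k=\bar X_0/X_0=:\rho$ for every $k$, with $\rho\bar\rho=1$. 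Writing $\rho=p/q$ in lowest terms with $\gcd(p,q)=1$ gives the key polynomial identity $p\bar p=q\bar q$.

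The crux is to show that, together with $\gcd(p,q)=1$, this forces $p=c\bar q$ for some $c\in\mathbb{C}^*$ with $|c|=1$. I would compare $\pi$-adic valuations of both sides for each irreducible $\pi\in\mathbb{C}[u,v]$, using the rule $v_\pi(\bar f)=v_{\bar\pi}(f)$. If $\bar\pi$ is associate to $\pi$, the equation yields $2v_\pi(p)=2v_\pi(q)$, so coprimality forces $v_\pi(p)=v_\pi(q)=0$. If $\bar\pi$ is not associate to $\pi$, one gets $v_\pi(p)+v_{\bar\pi}(p)=v_\pi(q)+v_{\bar\pi}(q)$, and a short case analysis combining this with $\gcd(p,q)=1$ yields both $p\mid\bar q$ and $\bar q\mid p$. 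Comparing leading units via $p\bar p=q\bar q$ then forces $|c|=1$.

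With $p=c\bar q$ in hand, the identity $q\bar X_k=c\bar q X_k$ together with $\gcd(q,\bar q)=1$ (inherited from $\gcd(p,q)=1$) yields $q\mid X_k$ for every $k$. Writing $X_k=qX_k''$ and substituting gives $\bar X_k''=cX_k''$ for every $k$, with the \emph{same} unit $c$. Since $|c|=1$, pick $\mu\in\mathbb{C}^*$ with $\mu/\bar\mu=c$ (for instance $\mu=e^{i\alpha}$ if $c=e^{2i\alpha}$); then $X_k':=\mu X_k''$ satisfies $\overline{X_k'}=\bar\mu\bar X_k''=\bar\mu c X_k''=\mu X_k''=X_k'$, so $X_k'\in\mathbb{R}[u,v]$. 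Setting $Y:=\bar\mu q\in\mathbb{C}[u,v]$ yields $X_k=qX_k''=YX_k'$, as required. The main obstacle is the UFD step: carefully bookkeeping self-associate versus non-self-associate irreducibles in $\mathbb{C}[u,v]$ to conclude $p=c\bar q$ from $p\bar p=q\bar q$ and $\gcd(p,q)=1$.
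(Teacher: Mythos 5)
Your proof is correct, and it reaches the conclusion by a somewhat different organization than the paper. Both arguments start from the same observation (reality of the projective point on an open real set upgrades, by Zariski density, to the polynomial identities $X_k\bar X_0=\bar X_kX_0$, where the bar is coefficient conjugation) and both rest on $\mathbb{C}[u,v]$ being a UFD. The paper then performs explicit exponent bookkeeping: it factors every $X_l$ simultaneously into real irreducibles $Z_k$ and conjugate pairs $Y_k,\bar Y_k$, deduces from $X_0/X_l=\bar X_0/\bar X_l$ that the exponent imbalances $p_{lk}-q_{lk}$ and the unit twist are the same for all $l$, and reads off $X_l'$ and $Y$ directly from that data. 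You instead package everything into the single rational function $\rho=\bar X_0/X_0$ with $\rho\bar\rho=1$, prove a Hilbert--90--type factorization $\rho=c\,\bar q/q$ with $\gcd(p,q)=1$ and $|c|=1$, conclude $q\mid X_k$ and $\bar X_k''=cX_k''$, and remove the unimodular constant by twisting with $\mu$ satisfying $\mu/\bar\mu=c$. This isolates a clean key lemma and avoids tracking the exponents of every irreducible in every $X_l$, at the cost of the small normalizations at the end (note your choice of $\mu$ must be unimodular, as in your example $\mu=e^{i\alpha}$, so that $Y=\bar\mu q$ gives exactly $X_k=X_k'Y$; and the valuation case analysis you sketch can be shortcut, since $p\mid q\bar q$ with $\gcd(p,q)=1$ gives $p\mid\bar q$, and symmetrically $\bar q\mid p$). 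The paper's version is more explicitly constructive; yours is more structural.
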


\begin{proof}[Proof of Lemma~\ref{l-real}]
Without loss of generality assume that $X_0$ is not identically zero.
Take $1\le l\le n$ such that $X_l$ is not identically zero and take generic real $u,v$ from the open set in question. By the assumption of the lemma $X_0(u,v):\dots:X_n(u,v)$ is real. Hence $X_0(u,v)/X_l(u,v)$ is real, therefore $X_0(u,v)/X_l(u,v)=\bar X_0(u,v)/\bar X_l(u,v)$. Since this holds for generic real $u,v$ it follows that $X_0/X_l=\bar X_0/\bar X_l$ as polynomials.
Take decompositions
\begin{align*}
X_0&=\lambda_0 Y_{1}^{p_{01}}\bar Y_{1}^{q_{01}}Z_{1}^{r_{01}}\dots Y_{m}^{p_{0m}}\bar Y_{m}^{q_{0m}}Z_{m}^{r_{0m}},\\
\dots\\
X_n&=\lambda_n Y_{1}^{p_{n1}}\bar Y_{1}^{q_{n1}}Z_{1}^{r_{n1}}\dots Y_{m}^{p_{nm}}\bar Y_{m}^{q_{nm}}Z_{m}^{r_{nm}},
\end{align*}
  into coprime reduced irreducible factors $Y_{1},\dots,Y_{m}\in\mathbb{C}[u,v]-\mathbb{R}[u,v]$, $Z_{1},\dots,Z_{m}\in\mathbb{R}[u,v]$ with the powers $p_{ij},q_{ij},r_{ij}\ge 0$,  and constant factors $\lambda_0,\dots,\lambda_n\in\mathbb{C}$, $\lambda_0\ne0$.

Since $\mathbb{C}[u,v]$ is a unique factorization domain, the relation $X_0/X_l=\bar X_0/\bar X_l$ implies that $\lambda_l\bar\lambda_0\in\mathbb{R}$ and $p_{0k}-q_{0k}=\dots=p_{nk}-q_{nk}$ for each $k=1,\dots,m$. Without loss of generality assume that $p_{0k}-q_{0k}\ge 0$ for each $k=1,\dots,m$ (otherwise replace $Y_k$ by $\bar Y_k$ and vice versa). It remains to set
\begin{align*}
 X'_l&:=\lambda_l\bar\lambda_0\left(Y_{1}\bar Y_{1}\right)^{q_{l1}}Z_{1}^{r_{l1}}\dots \left(Y_{m}\bar Y_{m}\right)^{q_{lm}}Z_{m}^{r_{lm}};\\
 Y&:=\bar\lambda_0^{-1}Y_{1}^{p_{01}-q_{01}}\dots Y_{m}^{p_{0m}-q_{0m}}.
\\[-1.4cm]
\end{align*}
\end{proof}

\begin{proof}[Proof of Corollary~\ref{haupt}]
For a surface $\Phi$ in $S^{n-1}$ the corollary follows from Lemmas~\ref{l-reduction}--\ref{l-real}.
Now consider a surface $\Phi$ in $\mathbb{R}^{n-1}$. Project it stereographically to $S^{n-1}$. The resulting surface has a parametrization as granted by the theorem for $S^{n-1}$. Thus the initial surface has the parametrization
$\Phi=X_{0}-X_{n}:X_{1}:\dots:X_{n-1}$ with
$X_{1}^2+\dots+X_{n-1}^2=(X_{0}-X_{n})(X_{0}+X_{n})$, as required.
\end{proof}

The following result is useful for applications of Corollary~\ref{haupt}.

\begin{lem} If through each point of an analytic surface in $\mathbb{R}^3$ one can draw infinitely many circular arcs fully contained in the surface then the surface is a subset of a sphere or a plane.
\end{lem}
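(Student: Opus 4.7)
The plan is to use an inversion centered at a point of $\Phi$ to reduce the statement to classifying certain ruled surfaces, and then to apply the classical theorem of Coolidge already cited in the Introduction.

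First, I would fix a point $P\in\Phi$ and consider the inversion $I_P\colon q\mapsto(q-P)/|q-P|^2$. For any circular arc $C$ through $P$ with unit tangent vector $v$ at $P$, a direct Taylor expansion of $(q-P)/|q-P|^2$ along $C$ shows that $I_P(C\setminus\{P\})$ is a straight line parallel to $v$. Applying this to the analytic one-parameter family of circular arcs through $P$ lying in $\Phi$, the image $\Phi^*:=I_P(\Phi\setminus\{P\})$ contains an analytic one-parameter family of straight lines $L_t$, each parallel to the fixed plane $T_P\Phi$. A direct computation of the Jacobian of the map $(s,t)\mapsto L_t(s)$ near $P$ (based on the second-order Taylor expansion of the original circles at $P$, and using that distinct circles through $P$ necessarily differ either in tangent direction or in curvature vector) shows that $\bigcup_t L_t$ covers a dense open subset of $\Phi^*$. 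Hence $\Phi^*$ is a ruled analytic surface whose rulings are all parallel to $T_P\Phi$.

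Next, since M\"obius transformations preserve circular arcs, $\Phi^*$ still has the property that through each of its points pass infinitely many, and in particular at least three, circular arcs lying in $\Phi^*$. By the classical theorem of Coolidge \cite[Theorem~20 in~p.~296]{coolidge:1916}, cited in the Introduction in the formulation \cite[Theorem~3]{Niels-13}, the surface $\Phi^*$ is then contained in a Darboux cyclide, i.e., an irreducible real algebraic surface of degree at most $4$. I would then inspect the classical list of ruled real algebraic surfaces of degree $\le 4$ whose rulings are all parallel to a fixed plane: the only options are planes and cylinders (cones have rulings meeting at the apex, and one-sheeted hyperboloids, ruled cubics, and ruled quartics have ruling directions sweeping out a two-dimensional set). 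Among cylinders, only those with circular cross-section contain any ordinary circle at all, and they carry only one parallel circle plus the ruling line through each point, giving only finitely many circular arcs per point; this contradicts the hypothesis. Hence $\Phi^*$ is a plane. Finally, $\Phi=I_P^{-1}(\Phi^*)$ is the inverse image of a plane under an inversion, which is a sphere when the plane avoids the center $P$ and a plane when it contains $P$, as required.

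The hard part will be the classification step in the second paragraph, namely rigorously identifying all ruled Darboux cyclides whose rulings are parallel to a fixed plane as precisely planes and cylinders. A direct route is a case-by-case analysis of ruled real algebraic surfaces of degrees $1,2,3,4$; a more conceptual alternative would be to show directly, without appealing to Coolidge's theorem, that a ruled analytic surface with rulings parallel to a fixed plane admits no nontrivial circle through a generic point unless it is a plane, by using the explicit parametrization of such a surface as a conoid and imposing the condition that a circle lie on it.
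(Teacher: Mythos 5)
Your first move (inverting at a point $P$ of the surface so that the circles through $P$ become lines parallel to $T_P\Phi$) is exactly the paper's first move, phrased there as stereographic projection to $S^3$ and back from a point of the surface. The divergence, and the genuine gap, is in your classification step, which you yourself flag as ``the hard part'': the claim that the only ruled real algebraic surfaces of degree $\le 4$ whose ruling directions lie in a fixed plane are planes and cylinders is false. The hyperbolic paraboloid is a Darboux cyclide (a quadric), is ruled, and the directions of each of its two families of rulings are coplanar, yet it is neither a plane nor a cylinder; Pl\"ucker-type conoids give ruled cubics and quartics with the same property. Your stated reason for excluding hyperboloids, cubics and quartics --- that their ``ruling directions sweep out a two-dimensional set'' --- cannot be correct for any ruled surface, since the rulings form a one-parameter family and their directions therefore trace out at most a curve in the space of directions; the relevant dichotomy is whether that curve lies in a plane, and for the hyperbolic paraboloid it does. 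The auxiliary claim that among cylinders only those with circular cross-section contain circles is also false (an elliptic cylinder has two families of circular plane sections), and the ``conceptual alternative'' at the end fails for the same reason: circular and elliptic cylinders are ruled with rulings parallel to a fixed plane and do contain circles through every point. So the surfaces that survive your ruling analysis can only be eliminated by bringing the circles back into play, which your sketch never does.

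That is precisely how the paper closes the argument: after creating a line through almost each point it invokes \cite[Theorem~1]{NS11} (a surface containing a line and a circle through each point is a quadric or a plane) and then uses the fact that among quadrics only spheres and planes carry a two-parameter family of circles. If you want to keep your Coolidge-based degree bound, you would still need an argument of this type to exclude the hyperbolic paraboloid, non-circular quadric cylinders, and any conoidal cubic or quartic cyclides, at which point you are essentially reproving the cited result. A secondary, smaller gap: from ``infinitely many circular arcs through $P$'' you pass to an analytic one-parameter family of arcs through $P$ whose image lines cover a dense open subset of $\Phi^*$; the hypothesis by itself gives only an infinite set, and the paper secures the covering property by first passing to the algebraic family of conics provided by Remark~\ref{rem-infinity}, whose members through a given point cover an open subset of the surface. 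Your Jacobian computation would need this family (and its nondegeneracy) as input, so some version of that algebraicity step should be made explicit.
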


\begin{proof} Perform inverse stereographic projection of the surface to $S^3$. By Remark~\ref{rem-infinity} the resulting surface is covered by at least 2-dimensional algebraic family of conics and hence circles. The circles of the family passing through a particular point must cover an open subset of the surface. Project the surface back to $\mathbb{R}^3$ from this point.
We get a surface containing a line segment and infinitely many circular arcs through almost each point. By \cite[Theorem~1]{NS11} the resulting surface is a subset of a quadric or a plane. Since it is covered by a 2-dimensional family of circles, by the classification of quadrics the resulting surface, and hence initial one, is a subset of a sphere or a plane.
\end{proof}

We conclude the section by an open problem: does the lemma remain true in dimension $n>3$?

\section{Factorization results}\label{sec:final}

In this section we prove the results concerning factorization of quaternionic polynomials: Splitting Lemma~\ref{l-splitting-basic}, Corollary~\ref{cor-splitting}, and give some examples and final remarks.


\begin{proof}[Proof of Splitting Lemma~\ref{l-splitting-basic}]
%
%
%
Write $Q(u,v)=:Q_0(u)+vQ_1(u)=:Q_{00}+Q_{10}u+Q_{01}v+Q_{11}uv$ with $Q_0,Q_1\in\mathbb{H}_{10}$, $Q_{11}\ne 0$. Denote $q:=-Q_{11}^{-1}Q_{10}\in\mathbb{H}$.

Consider the polynomial $|Q|^2(u,q)$ obtained by substitution of the quaternion $q$ into the \emph{real} polynomial $|Q|^2(u,v)$. 
On one hand, $|Q|^2(u,q)=P(u)R(q)$ is divisible by $P(u)$ of degree $2$. On the other hand,  
$|Q|^2(u,q)=q(qQ_1+Q_0)\bar Q_1+(qQ_1+Q_0)\bar Q_0$ has degree at most $1$ because $qQ_1+Q_0=Q_{00}-Q_{01}Q_{11}^{-1}Q_{01}=:p$ is a constant. Thus $|Q|^2(u,q)=0$ identically, i.e., $qp\bar Q_1+p\bar Q_0=0$.

Now for $p=0$ we get $Q_0=-qQ_1$, hence $Q=Q_0+vQ_1=(v-q)Q_1$ is reducible. 
For $p\ne 0$ we get $Q_0=-Q_1\bar p\,\bar q\,\bar p^{-1}$, hence $Q=Q_1(v-\bar p\,\bar q\,\bar p^{-1})$ is again reducible.
\end{proof}

\begin{proof}[Proof of Corollary~\ref{cor-splitting}] It follows directly from Splitting Lemma~\ref{l-splitting-basic} applied to $Q:=X_1+iX_2+jX_3+kX_4$ with $|Q|^2=(X_6+X_5)(X_6-X_5)$.
\end{proof}


The following example shows that Splitting Lemma~\ref{l-splitting-basic} does not hold for degree $2$ polynomials.

\begin{exmp} \label{ex-Beauregard} (Beauregard \cite{Beauregard-93}).
The polynomial
$Q=u^2v^2-1 + (u^2- v^2)i + 2uvj$ is irreducible in $\mathbb{H}[u,v]$ but $|Q|^2=(u^4+1)(v^4+1)$ is reducible in $\mathbb{R}[u,v]$. In particular,
$$
Q\bar Q=(u-e^{i\pi/4})(u-e^{3i\pi/4})(u-e^{5i\pi/4})(u-e^{7i\pi/4})(v-e^{i\pi/4})(v-e^{3i\pi/4})(v-e^{5i\pi/4})(v-e^{7i\pi/4})
$$
are two decompositions in $\mathbb{H}[u,v]$ with different number of irreducible factors.
\end{exmp}

\begin{proof} Let us prove that $Q$ is irreducible over $\mathbb{H}[u,v]$ (in \cite{Beauregard-93} the irreducibility only  over \emph{rational} quaternions is proved). Assume that $Q=PR$, where $P,R\in\mathbb{H}_{**}$ are not constant.

First consider the case when one of these polynomials, say, $P$, does not depend on one of variables, say, $v$. Then write $Q=(u^2-i)v^2+(2ju)v+(iu^2-1)$. Both $u^2-i$ and $2ju$ must be left-divisible by $P\ne\mathrm{const}$, a contradiction by taking $u=0$.

It remains to consider the case when $P,R\in\mathbb{H}_{11}$. Since $$|P|^2|R|^2=|Q|^2=(u^2+\sqrt{2}u+1)(u^2-\sqrt{2}u+1)(v^2+\sqrt{2}v+1)(v^2-\sqrt{2}v+1)$$ and $\mathbb{R}[u,v]$ is a unique factorization domain it follows that $|P|^2$ is product of two quadratic factors. 
By Splitting Lemma~\ref{l-splitting-basic} the polynomial $P$ is reducible in $\mathbb{H}[u,v]$. The left factor in a decomposition of $P\in\mathbb{H}_{11}$ is a left divisor of $Q$ and does not depend on one of the variables, which leads to the first case already considered above.
In both cases we get a contradiction which proves that $Q$ is irreducible.
\end{proof}

\begin{conj} \label{conj-1-var} Polynomials $X_{1},\dots,X_6\in\mathbb{R}[u]$ satisfy $X_{1}^2+\dots+X_5^2=X_6^2$ if and only if for some
$A,B\in\mathbb{H}[u]$, $D\in\mathbb{R}[u]$ we have
\begin{equation*}\label{eq-one-variable}
X_1+iX_2+jX_3+kX_4=2ABD,\quad
X_5=(|B|^2-|A|^2)D,\quad
X_6=(|B|^2+|A|^2)D.
\end{equation*}
\end{conj}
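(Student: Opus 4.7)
The easy direction is a direct computation: $|2ABD|^2 = 4|A|^2|B|^2 D^2 = \bigl((|B|^2+|A|^2)^2 - (|B|^2-|A|^2)^2\bigr) D^2 = X_6^2 - X_5^2$, so $X_1^2 + \cdots + X_4^2 = X_6^2 - X_5^2$. For the forward direction I would set $Q := X_1 + iX_2 + jX_3 + kX_4 \in \mathbb{H}[u]$, so $|Q|^2 = (X_6-X_5)(X_6+X_5) =: P \cdot R$, and aim to produce $A, B \in \mathbb{H}[u]$, $D \in \mathbb{R}[u]$ with $2|A|^2 D = P$, $2|B|^2 D = R$, and $Q = 2ABD$; the identities $X_5 = (R-P)/2$ and $X_6 = (R+P)/2$ then follow automatically. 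The first move handles sign issues: since $PR = |Q|^2 \geq 0$ on $\mathbb{R}$, the polynomials $P, R$ have matching parity of multiplicity at every real root, and at each real root $r$ where both have odd multiplicity the identity $|Q(r)|^2 = P(r)R(r) = 0$ forces $Q(r) = 0$. Let $\tilde D \in \mathbb{R}[u]$ be the product of $(u-r)$ over these odd-multiplicity common real roots, with global sign chosen so that both $p := P/\tilde D$ and $\rho := R/\tilde D$ are nonnegative on $\mathbb{R}$; then $\tilde D$, being a product of pairwise coprime central divisors of $Q$, divides $Q$ in $\mathbb{H}[u]$, and $\tilde Q := Q/\tilde D$ satisfies $|\tilde Q|^2 = p\rho$ with $p, \rho \geq 0$.

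The core technical step is the \emph{lifting lemma}: \emph{if $\tilde Q \in \mathbb{H}[u]$ has $|\tilde Q|^2 = p\rho$ with $p, \rho \in \mathbb{R}[u]$ nonnegative on $\mathbb{R}$, then $\tilde Q = FG$ for some $F, G \in \mathbb{H}[u]$ with $|F|^2 = p$ and $|G|^2 = \rho$.} I would prove this by induction on $\deg \tilde Q$, peeling one irreducible factor off the right of $\rho$ at each step. If $\rho$ has a nonnegative irreducible quadratic divisor $h = u^2 - 2au + b$, then $h \mid |\tilde Q|^2$, and the quaternionic analogue of the Niven--Gordon--Motzkin theorem (cf.\ \cite{Ore,Gentili-etal-13}) produces a right root $q \in \mathbb{H}$ of $\tilde Q$ with $\Re q = a$, $|q|^2 = b$; writing $\tilde Q = \tilde Q_1(u-q)$ with $|u-q|^2 = h$ reduces to the inductive hypothesis for $\tilde Q_1$ with $|\tilde Q_1|^2 = p\cdot(\rho/h)$. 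If instead $(u-s_0)^2 \mid \rho$ for a real $s_0$, then $|\tilde Q(s_0)|^2 = 0$ forces $\tilde Q(s_0) = 0$, hence $(u-s_0) \mid \tilde Q$, and induction closes the case. Setting $A := F$, $B := G$, $D := \tilde D/2$ then yields $Q = \tilde D \cdot FG = 2ABD$ together with the required norm identities.

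The main obstacle is the right-root existence step within the prescribed similarity sphere. Because of non-commutativity, the naive identity $\tilde Q(q) = h(q)\tilde Q_1(q) + R(q)$ fails for $q \notin \mathbb{R}$. The key observation is that $h$ is central in $\mathbb{H}[u]$, so for any $q$ with $h(q) = 0$ one has $(h \tilde Q_1)(q) = \sum_j (\tilde Q_1)_j\, h(q)\, q^j = 0$; after right-dividing $\tilde Q$ by $h$ the remainder has the form $eu + f \in \mathbb{H}[u]$ with $|eu+f|^2 = |e|^2 h$ (the only way a degree-$\leq 2$ real polynomial divisible by $h$ can arise from $|R|^2$), and then $q := -e^{-1}f$ automatically lies on the sphere $\{\Re q = a,\ |q|^2 = b\}$ and satisfies $\tilde Q(q) = eq + f = 0$. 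This is precisely the argument that breaks down in two variables (cf.\ Example~\ref{ex-Beauregard} and Splitting Lemma~\ref{l-splitting-basic}), which is why Conjecture~\ref{conj-alg} for two variables requires substantially new ideas.
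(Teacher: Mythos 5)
You should know that the paper does not actually prove this statement: it is stated as Conjecture~\ref{conj-1-var} and explicitly deferred (together with Conjecture~\ref{conj-alg}) to a subsequent publication; the introduction only remarks that in one variable ``a similar argument works'' because $\mathbb{H}[u]$ is a UFD in the sense of Ore/Gordon--Motzkin. So there is no proof in the paper to compare against, and your write-up supplies precisely the argument the authors allude to: reduce to $|Q|^2=(X_6-X_5)(X_6+X_5)$, strip off the odd-multiplicity common real roots into a central real factor $\tilde D$ fixing signs, and then lift the factorization of the norm to a factorization $\tilde Q=FG$ in $\mathbb{H}[u]$ by repeatedly producing a right root on the $2$-sphere attached to each irreducible quadratic divisor of $\rho$ (the remainder computation $\tilde Q=\tilde Q_1h+eu+f$, $|eu+f|^2=|e|^2h$, $q=-e^{-1}f$ is the standard Niven/Gordon--Motzkin mechanism and is correct, as is your use of centrality of $h$). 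I checked the details and the argument is sound; the norm identities $2|A|^2D=X_6-X_5$, $2|B|^2D=X_6+X_5$ do come out right with $A=F$, $B=G$, $D=\tilde D/2$.

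Three small points should be patched for completeness, all routine. First, the degenerate case $Q\equiv 0$: then $P R\equiv 0$, the multiplicity-parity discussion does not apply, and one settles it directly (say $P\equiv 0$, take $A=0$, $B=1$, $D=R/2$). Second, in the root-existence step you implicitly assume $e\neq 0$; if $e=0$ then divisibility of the constant $|f|^2$ by $h$ forces $f=0$, i.e.\ $h\mid\tilde Q$, and one peels $u-q$ for any $q$ with $h(q)=0$ using $h=(u-\bar q)(u-q)$ --- trivial, but it is a separate subcase. Third, state the base case of the induction ($\rho$ constant, necessarily a positive constant when $Q\neq 0$; take $G=\sqrt{\rho}$, $F=\tilde Q/\sqrt{\rho}$). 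Your closing remark is also apt: this one-variable argument hinges on the division/root structure of $\mathbb{H}[u]$, which is exactly what fails in two variables (Example~\ref{ex-Beauregard}), so it does not by itself touch Conjecture~\ref{conj-alg}.
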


It is interesting, if Splitting Lemma~\ref{l-splitting-basic} remains true for polynomials in more than $2$ variables.

It is interesting to obtain \emph{octonion} counterparts of the results of this section. In particular, this could help to find all surfaces in $\mathbb{R}^n$ containing two circles through each point for $n>4$.

Let us give some final remarks.
Simple formula~\eqref{eq-4DBconj} produces surfaces in $\mathbb{R}^4$ containing two circles through each point. However, it does not give \emph{all} such surfaces in $\mathbb{R}^4$ and is not convenient to produce such surfaces in $\mathbb{R}^3$. Thus we suggest the following 
alternative approach to surface construction.

The \emph{(top-left) quasideterminant} of a $3\times 3$ matrix $M=(M_{ij})$ with the entries from $\mathbb{H}[u,v]$ is the following expression \cite{Gelfand-etal-05}:
\begin{multline*}
|M|_{11}:=M_{11}-M_{12}(M_{22}-M_{23}M^{-1}_{33} M_{32})^{-1}M_{21}- M_{12}(M_{32}-M_{33}M^{-1}_{23}M_{22})^{-1}M_{31}-\\
M_{13}(M_{23}-M_{22}M^{-1}_{32} M_{33})^{-1}M_{21}-
M_{13}(M_{33}-M_{32}M^{-1}_{22} M_{23})^{-1}M_{31}.
\end{multline*}
If $M_{22}$ is linear in $u$, $M_{33}$ is linear in $v$, and all the other entries $M_{ij}$ are generic constants then by the homological relations \cite[Theorem~1.4.2(i)]{Gelfand-etal-05} the quasideterminant is fraction-linear in each of the variables $u$ and $v$. Thus $|M|_{11}(u,v)$ is a surface in $\mathbb{R}^4$ containing $2$ 
circles through each point (by Lemma~\ref{l-axial} above). If the matrix $M$ is skew-hermitian then the surface actually lies in $\mathbb{R}^3$.

Similarly, if $M_{11}=M_{12}=M_{21}=0$, $M_{31}$ is a constant, $M_{13}$, $M_{32}$, $M_{33}$ are linear in $u$, $M_{22}$, $M_{23}$ are linear in $v$ then $|M|_{11}(u,v)$ is again a surface in $\mathbb{R}^4$ containing $2$ 
circles through each point.

It is interesting, if there is a natural class of matrices with the entries from $\mathbb{H}[u,v]$ such that \emph{all} surfaces containing $2$ noncospheric circles through each point arise as the quasideterminants. 

\subsection*{Acknowledgements}

The authors are grateful to N. Lubbes for the movie in Figure~\ref{movie} and many useful remarks, to L. Shi for the photo in Figure~\ref{movie}, to A. Pakharev for pointing out that numerous surfaces we tried to invent 
have form~\eqref{eq-4DBconj}, and to S. Galkin, S. Ivanov, W. K\"uhnel, N. Lubbes, S. Orevkov, R. Pignatelli, F. Polizzi, H. Pottmann, G. Robinson, J. Schicho, K. Shramov, V. Timorin, M. Verbitsky, S. Zub\.e for useful discussions. The first author is grateful to King Abdullah University of Science and Technology for hosting him during the start of the work over the paper.



\small
\noindent
\textsc{Mikhail Skopenkov\\
Faculty of Mathematics, National Research University Higher School of Economics, and\\
Institute for Information Transmission Problems, Russian Academy of Sciences} 
\\
\texttt{skopenkov@rambler.ru} \quad \url{http://skopenkov.ru}

\bigskip

\noindent
\textsc{Rimvydas Krasauskas\\
Faculty of Mathematics and Informatics, Vilnius University}\\
\texttt{rimvydas.krasauskas@mif.vu.lt}


\end{document}